\DeclareMathOperator{\aut}{Aut}
\DeclareMathOperator{\diag}{Diag}
\DeclareMathOperator{\cay}{Cay}
\DeclareMathOperator{\cyc}{Cyc}
\DeclareMathOperator{\iso}{Iso}
\DeclareMathOperator{\orb}{Orb}
\DeclareMathOperator{\pr}{pr}
\DeclareMathOperator{\rk}{rk}
\DeclareMathOperator{\Span}{Span}
\DeclareMathOperator{\sym}{Sym}
\DeclareMathOperator{\rad}{rad}
\def\@seccntformat#1{\csname the#1\endcsname. } 
\def\@biblabel#1{#1.}
\title{On separability of Schur rings over abelian $p$-groups}
\author{G. K. Ryabov}
\address{Novosibirsk State University, 2 Pirogova St., 630090 Novosibirsk, Russia}
\email{gric2ryabov@gmail.com}
\thanks{\rm The work is supported by the Russian Foundation for Basic Research (project 17-51-53007)}
\date{}
\newtheorem{prop}{Proposition}[section]
\newtheorem*{theo1}{Theorem 1}
\newtheorem*{theo2}{Theorem 2}
\newtheorem{lemm}{Lemma}[section]
\theoremstyle{definition}
\begin{document}

\vspace{\baselineskip}
\vspace{\baselineskip}

\vspace{\baselineskip}

\vspace{\baselineskip}

\begin{abstract}
An $S$-ring (Schur ring) is called \emph{separable} with respect to a class of $S$-rings $\mathcal{K}$ if it is determined up to isomorphism in  $\mathcal{K}$ only by the tensor of its structure constants. An abelian group is said to be  \emph{separable} if every $S$-ring over this group is separable with respect to the class of $S$-rings over abelian groups. Let $C_n$ be a cyclic group of order~$n$ and $G$ be a noncylic abelian $p$-group. From the previously obtained results it follows that if $G$ is separable then $G$ is isomorphic to $C_p\times C_{p^k}$ or $C_p\times C_p\times C_{p^k}$, where $p\in \{2,3\}$ and $k\geq 1$. We prove that the groups $D=C_p\times C_{p^k}$ are separable whenever $p\in \{2,3\}$. From this statement we deduce that a given Cayley graph over  $D$ and a given Cayley graph over an arbitrary abelian group one can check whether these graphs are isomorphic in time $|D|^{O(1)}$.
\\
\\
\textbf{Keywords}: Cayley graphs, Cayley graph isomorphism problem, Cayley schemes, Schur rings, permutation groups.
\\
\textbf{MSC}:05E30, 05C60, 20B35.
\end{abstract}

\maketitle

Let $G$ be a finite group. A subring of the group ring  $\mathbb{Z}G$ is called an \emph{$S$-ring} (a \emph{Schur ring}) over $G$ if it is a free $\mathbb{Z}$-module spanned on a special partition of $G$ (exact definitions are given in Section~$1$). The elements of this partition  are called the \emph{basic sets} of the $S$-ring and the number of the basic sets is called the \emph{rank} of the $S$-ring. The notion of the  $S$-ring goes back to Schur and Wielandt. They used  ``the $S$-ring method''  to study a permutation group having  a regular subgroup \cite{Schur,Wi}.

Let $\mathcal{A}$ and $\mathcal{A}^{'}$ be $S$-rings over groups $G$  and  $G^{'}$ respectively. \emph{A (combinatorial) isomorphism } from $\mathcal{A}$ to $\mathcal{A}^{'}$  is defined to be a bijection  $f:G\rightarrow G^{'}$ such that for every basic set $X$ of $\mathcal{A}$ the set $X^{'}=f(X)$ is a basic set of $\mathcal{A}^{'}$ and $f$ is an isomorphism of the Cayley graphs $\cay(G,X)$ and 
$\cay(G^{'},X^{'})$.  \emph{An algebraic isomorphism} from  $\mathcal{A}$ to $\mathcal{A}^{'}$ is defined to be a ring isomorphism of them  inducing the bijection between the basic sets of $\mathcal{A}$ and  the basic sets of $\mathcal{A}^{'}$. It can be checked that every combinatorial isomorphism induces an algebraic isomorphism. If every algebraic isomorphism from~$\mathcal{A}$ to another $S$-ring is induced  by a combinatorial isomorphism then $\mathcal{A}$ is said to be \emph{separable}. More precisely, we say that  $\mathcal{A}$ is \emph{separable} with respect to a class of $S$-rings if every algebraic isomorphism from $\mathcal{A}$ to an $S$-ring from this class is induced by a combinatorial isomorphism. Every separable $S$-ring is determined up to isomorphism only by the tensor of its structure constants. For more details see  \cite{EP3,EP7}.

Denote the classes of $S$-rings over cyclic and abelian groups by  $\mathcal{K}_C$ and  $\mathcal{K}_A$ respectively. A cyclic group of order $n$ is denoted by  $C_n$.  It was proved in \cite{EP3} that every  $S$-ring over a cyclic  $p$-group is separable with respect to $\mathcal{K}_C$. Infinite series of $S$-rings over cyclic groups that are nonseparable with respect to $\mathcal{K}_C$ were constructed in \cite{EP7}.

An abelian group $G$ is said to be \emph{separable}  if every  $S$-ring over $G$ is separable with respect to  $\mathcal{K}_A$. One can prove that a subgroup of a separable group is separable. \cite[Section~$3$]{Klin} implies that the group $H\times H$ is nonseparable  for every group $H$ of order at least~$4$. Let $G$ be a noncyclic abelian $p$-group. It follows that if $G$ is separable then  $G$ is isomorphic to $C_p\times C_{p^k}$ or $C_p\times C_p\times C_{p^k}$, where $p\in \{2,3\}$ and $k\geq 1$. In the present paper we prove that the groups from the first family  are separable. We will consider the question on a separability of the groups from the second family in a more general context in the next paper.

\begin{theo1}\label{main}
The group $C_p\times C_{p^k}$ is separable for $p\in \{2,3\}$ and $k\geq 1$.
\end{theo1}

The proof of Theorem~$1$ is based on the description of all  $S$-rings over the group~$D$ from this theorem that was obtained for  $p=2$ in \cite{MP3} and for $p=3$ in \cite{Ry}. If $\mathcal{A}$ is an $S$-ring over $D$ of rank at least~$3$ then one of the following statements holds: $(1)$~$\mathcal{A}$ is the tensor  product or the generalized wreath product of two  smaller  $S$-rings; $(2)$~$\mathcal{A}$ is \emph{cyclotomic} (it means that $\mathcal{A}$ is determined by a suitable subgroup of $\aut(D)$). The detailed description of   $S$-rings over $D$ is given in Lemma~\ref{element} and Lemma~\ref{Sring}. The separability of tensor products and generalized wreath products follows from the separability of operands. The most difficult task here is to check that cyclotomic  $S$-rings  are separable (see Section~$7$).

There is a relationship between the separability of $S$-rings and the isomorphism problem for Cayley graphs (see \cite[Section~$6.2$]{EP5}). In the case when all  $S$-rings over a  group of order~$n$ are separable the isomorphism problem for Cayley graphs over this group can be solved in time $n^{O(1)}$ by using the Weisfeiler-Leman algorithm  \cite{Weis,WeisL}. By Theorem~$1$ this implies  (see Section~$8$) the following statement. 

\begin{theo2}\label{main2}
Suppose that the group $D\cong C_p\times C_{p^k}$, where $p\in \{2,3\}$ and $k\geq 1$, is given explicitly. Then  for every Cayley graph  $\Gamma$ over  $D$ and  every Cayley graph $\Gamma^{'}$ over an arbitrary explicitly given  abelian group one can check in time  $|D|^{O(1)}$  whether $\Gamma$ and $\Gamma^{'}$ are isomorphic.
\end{theo2}

It should be mentioned that the isomorphism problem for Cayley graphs over cyclic groups was solved    in \cite{EP6} and \cite{M} independently.

In Sections $1$-$3$ we recall some definitions and facts concerned with $S$-rings and Cayley schemes. The most part of them is taken from \cite{MP2}.

The author would like to thank prof. I. Ponomarenko and prof. A. Vasil'ev for  their constructive comments which  help us to improve the text significantly.

\section{$S$-rings and Cayley schemes}

Let $G$ be a finite group and $\mathbb{Z}G$ be the integer group ring. Denote the identity element of $G$ by $e$. If $X\subseteq G$ then  the formal sum $\sum_{x\in X} {x}$ is considered as an element of $\mathbb{Z}G$ and denoted by $\underline{X}$. The set $\{x^{-1}:x\in X\}$ is denoted by $X^{-1}$.
	
A subring  $\mathcal{A}\subseteq \mathbb{Z} G$ is called an \emph{$S$-ring} over $G$ if there exists a partition $\mathcal{S}=\mathcal{S}(\mathcal{A})$ of~$G$ such that:

$1)$ $\{e\}\in\mathcal{S}$,

$2)$  if $X\in\mathcal{S}$ then $X^{-1}\in\mathcal{S}$,

$3)$ $\mathcal{A}=\Span_{\mathbb{Z}}\{\underline{X}:\ X\in\mathcal{S}\}$.

The elements of $\mathcal{S}$ are called the \emph{basic sets} of  $\mathcal{A}$ and the number $|\mathcal{S}|$ is called the \emph{rank} of  $\mathcal{A}$. If $X,Y,Z\in\mathcal{S}$ then   the number of distinct representations of $z\in Z$ in the form $z=xy$ with $x\in X$ and $y\in Y$ is denoted by $c^Z_{X,Y}$. Note that if $X$ and $Y$ are basic sets of $\mathcal{A}$ then $\underline{X}~\underline{Y}=\sum_{Z\in \mathcal{S}(\mathcal{A})}c^Z_{X,Y}\underline{Z}$. Therefore the numbers  $c^Z_{X,Y}$ are structure constants of $\mathcal{A}$ with respect to the basis $\{\underline{X}:\ X\in\mathcal{S}\}$.

Let $\mathcal{R}$ be a partition of $G\times G$. A pair $\mathcal{C}=\left(G,\mathcal{R}\right)$ is called a \emph{Cayley scheme} over $G$ if the following properties hold:

$1)$ $\diag(G\times G)=\{(g,g):g\in G\}\in\mathcal{R}$;

$2)$ if  $R\in\mathcal{R}$ then $R^*=\{(h,g): (g,h)\in R\}\in\mathcal{R}$;

$3)$ if $R,~S,~T\in\mathcal{R}$ then the number $c^T_{R,S}=|\{h\in G:(g,h)\in R,~(h,f)\in S\}|$ does not depend on the choice of  $(g,f)\in T$;

$4)$ $\{(hg,fg):(h,f)\in R\}=R$ for every $R\in\mathcal{R}$ and every $g\in G$.

The numbers $c^T_{R,S}$ are called the \emph{intersection numbers} of $\mathcal{C}$, the elements of $\mathcal{R}$ are  called the \emph{basic relations} of $\mathcal{C}$, and the number $|\mathcal{R}|$ is called the \emph{rank} of $\mathcal{C}$. If $R\in \mathcal{R}$ and $g\in G$ then the number $n(R)=\{h:(g,h)\in R\}$ does not depend on the choice of~$g$ and it is called the \emph{valency} of $R$. Note that $n(R)=c^S_{R,R^{*}}$, where $S=\diag(G\times G)$.

There is a one-to-one correspondence between  $S$-rings and Cayley schemes over $G$. Namely, if $\mathcal{A}$ is an $S$-ring over $G$ then the  pair $\mathcal{C}(\mathcal{A})=\left(G,\mathcal{R}(\mathcal{A})\right)$, where $\mathcal{R}(\mathcal{A})=\{R(X):X\in \mathcal{S}(\mathcal{A})\}$ and $R(X)=\{(g,xg): g\in G, x\in X\}$, is a Cayley scheme over~$G$. Conversely, if $\mathcal{C}=\left(G,\mathcal{R}\right)$ is a Cayley scheme over $G$ then $\mathcal{S}(\mathcal{C})=\{X(R):R\in \mathcal{R}\}$, where $X(R)=\{x\in G: (e,x) \in R\}\subseteq G$, is a partition of $G$ that defines the $S$-ring $\mathcal{A}(\mathcal{C})$ over $G$. If $\mathcal{A}$ is an $S$-ring and $\mathcal{C}(\mathcal{A})$ is the corresponding Cayley scheme then $$c^{R(Z)}_{R(X),R(Y)}=c^Z_{X,Y}~\eqno(1)$$
for all $X,Y,Z\in \mathcal{S}(\mathcal{A})$.

\section{Isomorphisms}

Let  $\mathcal{A}$  and $\mathcal{A}^{'}$ be $S$-rings over groups $G$  and $G^{'}$ respectively and $\mathcal{C}=(G,\mathcal{R})$ and $\mathcal{C}^{'}=(G^{'},\mathcal{R}^{'})$ be Cayley schemes over $G$  and $G^{'}$ respectively. Set $\mathcal{S}=\mathcal{S}(\mathcal{A})$ and $\mathcal{S}^{'}=\mathcal{S}(\mathcal{A}^{'})$. A \emph{(combinatorial) isomorphism} from $\mathcal{C}$ to $\mathcal{C}^{'}$ is defined to be a bijection $f:G\rightarrow G^{'}$ such that $\mathcal{R}^{'}=\mathcal{R}^f$, where $\mathcal{R}^f=\{R^f:~R\in\mathcal{R}\}$ and $R^f=\{(g^f,~h^f):~(g,~h)\in R\}$. A \emph{(combinatorial) isomorphism} from $\mathcal{A}$  to $\mathcal{A}^{'}$ is defined to be a bijection $f:G\rightarrow G^{'}$  which is an isomorphism of the corresponding Cayley schemes  $\mathcal{C}(\mathcal{A})$ and $\mathcal{C}(\mathcal{A}^{'})$.

The group $\iso(\mathcal{A})$ of all isomorphisms from $\mathcal{A}$ onto itself has a normal subgroup
$$\aut(\mathcal{A})=\{f\in \iso(\mathcal{A}): R(X)^f=R(X)~\text{for every}~X\in \mathcal{S}(\mathcal{A})\}.$$
This subgroup is called the \emph{automorphism group} of $\mathcal{A}$ and denoted by $\aut(\mathcal{A})$; the elements of $\aut(\mathcal{A})$ are called \emph{automorphisms} of $\mathcal{A}$.

 An \emph{algebraic isomorphism} from $\mathcal{A}$  to $\mathcal{A}^{'}$ is defined to be a bijection $\varphi:\mathcal{S}\rightarrow\mathcal{S}^{'}$ such that $c_{X,Y}^Z=c_{X^{\varphi},Y^{\varphi}}^{Z^{\varphi}}$ for all $X,Y,Z\in \mathcal{S}$. The mapping $\underline{X}\rightarrow \underline{X}^{\varphi}$ is extended by linearity to the ring isomorphism of $\mathcal{A}$  and $\mathcal{A}^{'}$.  An \emph{algebraic isomorphism}  from  $\mathcal{C}$ to $\mathcal{C}^{'}$ is defined to be a bijection $\varphi:\mathcal{R}\rightarrow \mathcal{R}^{'}$ such  that $c_{R,S}^T=c_{R^{\varphi},S^{\varphi}}^{T^{\varphi}}$ for all $R,S,T\in \mathcal{R}$. If $\varphi$ is an algebraic isomorphism of  $\mathcal{A}$  and $\mathcal{A}^{'}$ then the mapping $R(X)\mapsto R(X^{\varphi})$ is an algebraic isomorphism of the corresponding Cayley schemes  $\mathcal{C}(\mathcal{A})$ and $\mathcal{C}(\mathcal{A}^{'})$ by $(1)$.

Every isomorphism $f$ of $S$-rings (Cayley schemes) preserves  the structure constants (intersection numbers) and hence $f$ induces the algebraic isomorphism $\varphi_f$. 

We say that a Cayley scheme $\mathcal{C}$ is \emph{separable} with respect to a class of Cayley schemes~$\mathcal{K}$ if for every $\mathcal{C}^{'}\in \mathcal{K}$ every algebraic isomorphism  $\varphi:\mathcal{C}\rightarrow \mathcal{C}^{'}$ is induced by an isomorphism. The next statement immediately follows from~$(1)$.

\begin{prop}\label{connect}
If $\mathcal{A}$ is an $S$-ring and $\mathcal{C}(\mathcal{A})$ is the corresponding Cayley scheme then 
$\mathcal{A}$ is separable with respect to a class of $S$-rings $\mathcal{K}$ if and only if $\mathcal{C}(\mathcal{A})$ is separable with respect to the class of Cayley schemes corresponding to $S$-rings from $\mathcal{K}$.  
\end{prop}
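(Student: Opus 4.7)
The plan is to translate every ingredient between the two languages by means of the bijection $\mathcal{A}\leftrightarrow\mathcal{C}(\mathcal{A})$ and then verify that separability is preserved.

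First, recall that combinatorial isomorphisms of $S$-rings were literally defined in Section~2 to be combinatorial isomorphisms of the associated Cayley schemes, so on the combinatorial side there is nothing to check: a bijection $f:G\to G'$ is a combinatorial isomorphism $\mathcal{A}\to\mathcal{A}'$ if and only if it is a combinatorial isomorphism $\mathcal{C}(\mathcal{A})\to\mathcal{C}(\mathcal{A}')$.

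Second, I would set up a bijection between algebraic isomorphisms on both sides. Given an algebraic isomorphism $\varphi:\mathcal{A}\to\mathcal{A}'$, define $\widetilde{\varphi}:\mathcal{R}(\mathcal{A})\to\mathcal{R}(\mathcal{A}')$ by $\widetilde{\varphi}(R(X))=R(X^{\varphi})$; by identity~$(1)$ we have $c^{R(Z)}_{R(X),R(Y)}=c^{Z}_{X,Y}=c^{Z^\varphi}_{X^\varphi,Y^\varphi}=c^{R(Z^\varphi)}_{R(X^\varphi),R(Y^\varphi)}$, so $\widetilde{\varphi}$ is an algebraic isomorphism of Cayley schemes (this is exactly the observation already recorded in the text right after~$(1)$). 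Conversely, starting from an algebraic isomorphism $\psi:\mathcal{C}(\mathcal{A})\to\mathcal{C}(\mathcal{A}')$, the rule $X\mapsto X(\psi(R(X)))$ defines, again by~$(1)$, an algebraic isomorphism $\mathcal{A}\to\mathcal{A}'$. The two constructions are mutually inverse.

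Third, I would check that this bijection is compatible with being induced by a combinatorial isomorphism. A combinatorial isomorphism $f:G\to G'$ induces $\varphi$ on $S$-rings precisely when $f(X)=X^{\varphi}$ for every basic set $X\in\mathcal{S}(\mathcal{A})$; and it induces $\widetilde{\varphi}$ on Cayley schemes precisely when $R(X)^{f}=\widetilde{\varphi}(R(X))=R(X^{\varphi})$ for every $X$. Since $R(X)^{f}=\{(g^{f},(xg)^{f}):g\in G,x\in X\}=R(f(X))$, the two conditions are literally the same.

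Combining the three steps, $\mathcal{A}$ is separable with respect to $\mathcal{K}$ iff every algebraic isomorphism from $\mathcal{A}$ to a member of $\mathcal{K}$ is induced by a combinatorial isomorphism, which (via $\varphi\leftrightarrow\widetilde{\varphi}$ and the matching of inducing bijections) is equivalent to the corresponding separability statement for $\mathcal{C}(\mathcal{A})$. There is no real obstacle here; the entire content of the proposition is the bookkeeping supplied by formula~$(1)$ together with the fact that the notion of combinatorial isomorphism for $S$-rings is defined through Cayley schemes in the first place.
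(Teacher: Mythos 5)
Your argument is correct and is exactly the paper's argument: the paper's own proof consists of the single remark that the proposition ``immediately follows from~$(1)$,'' and your three steps are precisely the bookkeeping that remark leaves implicit (the definitional identification of combinatorial isomorphisms, the bijection $\varphi\leftrightarrow\widetilde{\varphi}$ via $(1)$, and the matching of the inducing conditions).
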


For a fixed algebraic isomorphism $\varphi$ from  $\mathcal{A}$ to $\mathcal{A}^{'}$ (from $\mathcal{C}$ to $\mathcal{C}^{'}$) the set  of all isomorphisms $f$ such that  $\varphi_f=\varphi$ is denoted by  $\iso(\mathcal{A},\mathcal{A}^{'},\varphi)$ ($\iso(\mathcal{C},\mathcal{C}^{'},\varphi)$) . If $H$ is a group then put $H_{right}=\{x\mapsto xh,~x\in H:h\in H\}$. From the definitions it follows that

$$G_{right}\iso(\mathcal{A},\mathcal{A}^{'},\varphi)G^{'}_{right}=\iso(\mathcal{A},\mathcal{A}^{'},\varphi).~\eqno(2)$$
Note that an $S$-ring $\mathcal{A}$  is separable with respect to a class of $S$-rings $\mathcal{K}$ if and only if 
$$\iso(\mathcal{A},\mathcal{A}^{'},\varphi)\neq \varnothing$$ 
for every  $\mathcal{A}^{'}\in \mathcal{K}$ and every algebraic isomorphism  $\varphi:\mathcal{A}\rightarrow \mathcal{A}^{'}$. The group ring $\mathbb{Z}G$ and the $S$-ring of  rank~$2$ over $G$ are separable with respect to the class of all  $S$-rings. In the former case every basic set is singleton and hence every algebraic isomorphism is induced by an isomorphism in a natural way. In the latter case  there exists the unique algebraic isomorphism from the $S$-ring of rank~$2$ over $G$ to the $S$-ring of rank~$2$ over a given  group and this algebraic isomorphism is induced by every isomorphism.

Another type of  isomorphism between  $S$-rings (Cayley schemes) arises from group isomorphism. A \emph{Cayley isomorphism} from  $\mathcal{A}$  to $\mathcal{A}^{'}$ (from $\mathcal{C}$ to $\mathcal{C}^{'}$)  is defined to be a group isomorphism $f:G\rightarrow G^{'}$ such that $\mathcal{S}^f=\mathcal{S}^{'}$ ($\mathcal{R}^f=\mathcal{R}^{'}$). If there exists a Cayley isomorphism from  $\mathcal{A}$  to $\mathcal{A}^{'}$ we write $\mathcal{A}\cong_{\cay}\mathcal{A}^{'}$. Every Cayley isomorphism is a (combinatorial) isomorphism, however the converse statement is not true.

\section{$S$-rings: basic facts and constructions}

Let $\mathcal{A}$ be an $S$-ring over a group $G$. A set $X \subseteq G$ is called an \emph{$\mathcal{A}$-set} if $\underline{X}\in \mathcal{A}$. A subgroup $H \leq G$ is called an \emph{$\mathcal{A}$-subgroup} if $H$ is an $\mathcal{A}$-set. Let $L \unlhd U\leq G$. A section $U/L$ is called an \emph{$\mathcal{A}$-section} if $U$ and $L$ are $\mathcal{A}$-subgroups. If $S=U/L$ is an $\mathcal{A}$-section then the module
$$\mathcal{A}_S=Span_{\mathbb{Z}}\left\{\underline{X}^{\pi}:~X\in\mathcal{S}(\mathcal{A}),~X\subseteq U\right\},$$
where $\pi:U\rightarrow U/L$ is the quotient homomorphism, is an $S$-ring over $S$.

If $X \subseteq G$ then the set
$$\rad(X)=\{g\in G:~Xg=gX=X\}$$ 
is a subgroup of $G$ and  it is called the \emph{radical} of $X$. If $X$ is an $\mathcal{A}$-set  then the groups $\langle X \rangle$ and $\rad(X)$ are $\mathcal{A}$-subgroups of~$G$.

Let  $\mathcal{A}$ and $\mathcal{A}^{'}$ be $S$-rings over  $G$ and $G^{'}$ respectively and $\varphi:\mathcal{A}\rightarrow \mathcal{A}^{'}$ be an algebraic isomorphism. It is easy to see that $\varphi$ is extended to a bijection between  $\mathcal{A}$- and $\mathcal{A}^{'}$-sets and hence between  $\mathcal{A}$- and $\mathcal{A}^{'}$-sections. The images of an $\mathcal{A}$-set $X$ and an $\mathcal{A}$-section $S$ under the action of $\varphi$ are denoted by $X^{\varphi}$ and $S^{\varphi}$ respectively. If $S$ is an $\mathcal{A}$-section then  $\varphi$ induces the algebraic isomorphism 
$$\varphi_S:\mathcal{A}_S\rightarrow \mathcal{A}^{'}_{S^{'}},$$
where $S^{'}=S^{\varphi}$. Since $c^{\{e\}}_{X,Y}=\delta_{Y,X^{-1}}|X|$ and $|X|=c^{\{e\}}_{X,X^{-1}}$ , where $\delta_{X,X^{-1}}$ is the Kronecker delta, we conclude that $(X^{-1})^{\varphi}=(X^{\varphi})^{-1}$ and $|X|=|X^{\varphi}|$ for every $\mathcal{A}$-set $X$. In particular, $|G|=|G^{'}|$. It can be checked that

$$ \langle X^{\varphi} \rangle = \langle X \rangle ^{\varphi},~ \rad(X^{\varphi})=\rad(X)^{\varphi}~\eqno(3)$$ for every  $\mathcal{A}$-set $X$ (see \cite[p.10]{EP4}).

We say that an $S$-ring $\mathcal{A}$ is \emph{symmetric} if $X=X^{-1}$ for every $X\in \mathcal{S}(\mathcal{A})$. Clearly that if $\mathcal{A}$ is symmetric and  $\varphi:\mathcal{A}\rightarrow \mathcal{A}^{'}$ is an algebraic isomorphism then  $\mathcal{A}^{'}$ is also symmetric.

If $X \subseteq G$ and $m\in \mathbb{Z}$ then the set $\{x^m: x \in X\}$ is denoted by $X^{(m)}$. Sets $X,Y\subseteq G$ are called \emph{rationally conjugate} if there exists $m\in \mathbb{Z}$ coprime to $|G|$ such that $Y=X^{(m)}$. Further we formulate two satements on  $S$-rings over abelian groups that were proved, in fact, by Schur in \cite{Schur}. We give these statements in the form which can be found in \cite[Section~$23$]{Wi}.

\begin{lemm} \label{burn}
Let $\mathcal{A}$ be an $S$-ring over an abelian group  $G$. Then  $X^{(m)}\in \mathcal{S}(\mathcal{A})$  for every  $X\in \mathcal{S}(\mathcal{A})$ and every  $m\in \mathbb{Z}$ coprime to $|G|$. Other words, the mapping $\sigma_m:g\mapsto g^m$ is a Cayley isomorphism from   $\mathcal{A}$ onto itself.
\end{lemm}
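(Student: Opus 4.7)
The plan is to establish the claim first for $m=p$ a prime coprime to $|G|$ and then to bootstrap to general $m$. The reduction to the prime case is immediate: on abelian $G$ the power maps satisfy $\sigma_{mn}=\sigma_m\circ\sigma_n$, and $\sigma_{-1}$ already preserves $\mathcal{S}(\mathcal{A})$ by the defining axiom $X^{-1}\in\mathcal{S}(\mathcal{A})$, so factoring $m$ into primes (each necessarily coprime to $|G|$) collapses the problem onto a single prime $p$.

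The heart of the argument will be the classical Frobenius-type congruence in the group ring: for abelian $G$ and any $\xi=\sum_{x\in G}a_x x\in\mathbb{Z}G$, the multinomial expansion together with Fermat's little theorem yields
$$\xi^{p}\;\equiv\;\sum_{x\in G}a_x^{p}\,x^{p}\;\equiv\;\sum_{x\in G}a_x\,x^{p}\pmod{p\,\mathbb{Z}G},$$
because every non-``diagonal'' multinomial coefficient with entries summing to the prime $p$ is divisible by $p$ and commutativity lets the diagonal terms collect cleanly. Specialising to $\xi=\underline{X}$ for a basic set $X$, and using that $x\mapsto x^p$ is a bijection of $G$ when $\gcd(p,|G|)=1$ so that $\sum_{x\in X}x^p=\underline{X^{(p)}}$, I would obtain
$$\underline{X}^{p}\;\equiv\;\underline{X^{(p)}}\pmod{p\,\mathbb{Z}G}.$$
Since $\mathcal{A}$ is a subring, $\underline{X}^p\in\mathcal{A}$ expands as $\sum_{Y\in\mathcal{S}(\mathcal{A})}n_Y\underline{Y}$ with $n_Y\in\mathbb{Z}$; comparing coefficients of each $g\in G$ mod $p$ and using that every $g$ lies in a unique basic set, elements of the same basic set would receive identical (mod $p$) coefficients in $\underline{X^{(p)}}$, and since these coefficients are $0$ or $1$ they must agree on the nose. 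Hence $X^{(p)}$ is a union of basic sets of $\mathcal{A}$.

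To promote ``union of basic sets'' to ``basic set'' I would use a short counting argument: $\sigma_p$ is a bijection of $G$, so the image partition $\sigma_p(\mathcal{S}(\mathcal{A}))$ has the same cardinality as $\mathcal{S}(\mathcal{A})$, while the preceding step shows it is a coarsening of $\mathcal{S}(\mathcal{A})$; a coarsening with the same number of cells equals the original partition, so each $X^{(p)}\in\mathcal{S}(\mathcal{A})$. I do not foresee a serious obstacle here; the only delicate point is the modular bookkeeping, specifically invoking $\gcd(p,|G|)=1$ at the right moment so that $x\mapsto x^p$ is injective on $X$ and the congruence yields $\underline{X^{(p)}}$ exactly rather than a sum with multiplicities.
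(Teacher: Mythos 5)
Your argument is correct; the paper itself gives no proof of this lemma, merely citing Schur and Wielandt (Section 23 of \emph{Finite Permutation Groups}), and your Frobenius-congruence argument $\underline{X}^{p}\equiv\underline{X^{(p)}}\pmod{p\,\mathbb{Z}G}$ followed by the coefficient comparison and the partition-counting step is precisely the classical proof found there. The reduction to prime $m$ via $\sigma_{mn}=\sigma_m\circ\sigma_n$ and the closure axiom for $X^{-1}$ is also sound, since every prime divisor of an integer coprime to $|G|$ is itself coprime to $|G|$.
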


\begin{lemm} \label{sch}
Let $\mathcal{A}$ be an $S$-ring over an abelian group $G$, $p$ be a prime divisor of $|G|$, and  $H=\{g\in G:g^p=e\}$. Then for every  $\mathcal{A}$-set $X$ the set $X^{[p]}=\{x^p:x\in~X,~|X\cap Hx|\not\equiv 0\mod p\}$ is an $\mathcal{A}$-set.
\end{lemm}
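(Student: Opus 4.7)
The plan is to apply a Frobenius-type argument in characteristic~$p$, exploiting that $\mathcal{A}$ is closed under multiplication, so $\underline{X}^p\in\mathcal{A}$. I would expand this product in $\mathbb{Z}G$ and group the terms by the action of cyclic rotation of indices:
$$\underline{X}^p=\sum_{(x_1,\dots,x_p)\in X^p}x_1\cdots x_p.$$
Because $G$ is abelian, each monomial depends only on the multiset of indices, and the orbits under the cyclic $\mathbb{Z}/p\mathbb{Z}$-action on tuples have size $1$ or $p$, the fixed orbits being exactly the constant tuples $(x,x,\dots,x)$. Hence
$$\underline{X}^p\equiv\sum_{x\in X}x^p\pmod{p\,\mathbb{Z}G}.$$

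Next I would identify the right-hand side. Since $G$ is abelian, $x^p=(x')^p$ if and only if $x'x^{-1}\in H$, so the fiber of the $p$-th power map over any $y\in G$ is either empty or a coset $Hx_0$ of $H$. Consequently, for each $y$ the coefficient of $y$ in $\sum_{x\in X}x^p$ equals $|X\cap Hx_0|$ for any $x_0$ with $x_0^p=y$, and this value is well defined. Reducing mod~$p$, the support of this element in $\mathbb{F}_p G$ is therefore exactly $X^{[p]}$.

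Finally, I would combine the two viewpoints. Writing $\underline{X}^p=\sum_{Y\in\mathcal{S}(\mathcal{A})}a_Y\underline{Y}$ with $a_Y\in\mathbb{Z}$, and using that distinct basic sets are pairwise disjoint (so no cancellation can occur between different $\underline{Y}$'s), the support of $\underline{X}^p$ modulo $p$ equals $\bigcup_{\overline{a}_Y\ne 0}Y$, a union of basic sets of $\mathcal{A}$. Equating this union with $X^{[p]}$ yields that $X^{[p]}$ is a union of basic sets, whence $\underline{X^{[p]}}\in\mathcal{A}$, as required.

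The proof is essentially routine once one sees the idea, and no serious obstacle is expected. The only point deserving attention is the disjointness of basic sets, which is what guarantees that the support of a $\mathbb{Z}$-linear combination $\sum a_Y\underline{Y}$ reduced mod~$p$ is precisely $\bigcup_{\overline{a}_Y\neq 0}Y$; this is exactly the feature that makes the Frobenius-type reduction extract a genuine $\mathcal{A}$-set rather than merely an element of $\mathcal{A}/p\mathcal{A}$.
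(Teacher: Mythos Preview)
Your argument is correct and is precisely the classical Schur argument: compute $\underline{X}^{\,p}$ in $\mathbb{Z}G$, use the cyclic $\mathbb{Z}/p\mathbb{Z}$-action on $p$-tuples (whose only fixed points are constant tuples) together with commutativity of $G$ to obtain $\underline{X}^{\,p}\equiv\sum_{x\in X}x^{p}\pmod{p\,\mathbb{Z}G}$, then read off the support modulo $p$ using the basic-set decomposition.

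Note, however, that the paper does not supply its own proof of this lemma; it merely attributes the result to Schur and refers to Wielandt's book (\cite[Section~23]{Wi}) for the formulation. Your proof is essentially the one found in that reference, so there is no genuine difference in approach to report.
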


	Let $K \leq \aut(G)$. Then the set $\orb(K, G)$  of all orbits    of  $K$ on $G$ forms a partition of  $G$ that defines an  $S$-ring $\mathcal{A}$ over $G$.  In this case  $\mathcal{A}$ is called \emph{cyclotomic} and denoted by $\cyc(K,G)$.

	Let  $\mathcal{A}_1$ and $\mathcal{A}_2$ be $S$-rings over $G_1$ and $G_2$ respectively. Then the set
	
	$$\mathcal{S}=\mathcal{S}(\mathcal{A}_1)\times \mathcal{S}(\mathcal{A}_2)=\{X_1\times X_2:~X_1\in \mathcal{S}(\mathcal{A}_1),~X_2\in \mathcal{S}(\mathcal{A}_2)\} $$
forms a partition of  $G=G_1\times G_2$ that defines an  $S$-ring over $G$. This $S$-ring is called the  \emph{tensor product}  of $\mathcal{A}_1$ and $\mathcal{A}_2$ and denoted by $\mathcal{A}_1 \otimes \mathcal{A}_2$.
	
Let $e_1$ and $e_2$ be the identity elements of $G_1$ and $G_2$ respectively. Then the set $\mathcal{S}=\mathcal{S}_1 \cup \mathcal{S}_2$, where
	
	$$\mathcal{S}_1=\{X_1\times \{e_2\}:~X_1\in \mathcal{S}(\mathcal{A}_1)\},~\mathcal{S}_2=\{G_1\times \{X_2\}:~X_2\in \mathcal{S}(\mathcal{A}_2)\setminus \{e_2\}\} $$
forms a partition of  $G=G_1\times G_2$ that defines an  $S$-ring over $G$. This $S$-ring is called the  \emph{wreath product}  of $\mathcal{A}_1$ and $\mathcal{A}_2$ and denoted by $\mathcal{A}_1 \wr \mathcal{A}_2$.

\begin{lemm} \label{schurtens}
	 $S$-rings $\mathcal{A}_1 \otimes \mathcal{A}_2$ and $\mathcal{A}_1 \wr \mathcal{A}_2$ are separable if and only if  $S$-rings  $\mathcal{A}_1$ and $\mathcal{A}_2$ are separable.
	\end{lemm}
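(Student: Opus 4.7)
My plan handles each of the two operations and each direction of the equivalence separately, with the main tool being that an algebraic isomorphism $\varphi$ bijects $\mathcal{A}$-subgroups to $\mathcal{A}'$-subgroups by (3), and more generally $\mathcal{A}$-sections to $\mathcal{A}'$-sections, with the induced section $S$-rings algebraically isomorphic via $\varphi_S$.

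For the ``only if'' direction, suppose $\mathcal{A}_1 \otimes \mathcal{A}_2$ is separable and let $\varphi_1: \mathcal{A}_1 \to \mathcal{A}'_1$ be an algebraic isomorphism. Then $\varphi := \varphi_1 \otimes \id_{\mathcal{A}_2}$ is an algebraic isomorphism onto $\mathcal{A}'_1 \otimes \mathcal{A}_2$, since the product formulas for the structure constants split along the two factors. By separability, $\varphi$ is induced by some $f$; using (2) we may multiply $f$ by a right translation so that $f(e_1, e_2) = (e'_1, e_2)$. Then $f$ sends each basic set $X_1 \times \{e_2\}$ onto $X_1^{\varphi_1} \times \{e_2\}$, so the restriction of $f$ to $G_1 \times \{e_2\}$ followed by projection to the first coordinate yields an isomorphism $\mathcal{A}_1 \to \mathcal{A}'_1$ inducing $\varphi_1$. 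Separability of $\mathcal{A}_2$ follows by the same argument with the factors interchanged. The same strategy works verbatim for $\mathcal{A}_1 \wr \mathcal{A}_2$, with the only change that $\mathcal{A}_2$ is extracted from the quotient section $G/(G_1 \times \{e_2\})$ rather than from a direct factor.

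For the ``if'' direction of the tensor-product case, suppose both $\mathcal{A}_i$ are separable and let $\varphi: \mathcal{A}_1 \otimes \mathcal{A}_2 \to \mathcal{A}'$ be any algebraic isomorphism onto an $S$-ring over some $G'$. The $\mathcal{A}$-subgroups $H_i = G_i \times \{e_{3-i}\}$ map under $\varphi$ to $\mathcal{A}'$-subgroups $H'_i \leq G'$ with $|H'_1||H'_2| = |G'|$ and $H'_1 \cap H'_2 = \{e'\}$ (these relations being encoded in the structure constants via $|X| = c^{\{e\}}_{X, X^{-1}}$), so $G' = H'_1 \times H'_2$. The product form of the basic sets of $\mathcal{A}_1 \otimes \mathcal{A}_2$ is also detected by the structure constants, so $\mathcal{A}' = \mathcal{A}'_1 \otimes \mathcal{A}'_2$ with $\varphi = \varphi_1 \otimes \varphi_2$, where $\varphi_i = \varphi_{H_i}$. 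Separability of each $\mathcal{A}_i$ then supplies isomorphisms $f_i: G_i \to H'_i$ inducing $\varphi_i$, and $f := f_1 \times f_2$ induces $\varphi$.

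The wreath-product case proceeds analogously, but with one extra subtlety. Here $H' := \varphi(G_1 \times \{e_2\})$ is an $\mathcal{A}'$-subgroup of $G'$ with $(\mathcal{A}')_{H'} \cong \mathcal{A}_1$ and $(\mathcal{A}')_{G'/H'} \cong \mathcal{A}_2$ algebraically; the characteristic property of a wreath product---that each basic set is either contained in the bottom subgroup or is a full union of cosets of it---is read off from the structure constants, so $\mathcal{A}' = (\mathcal{A}')_{H'} \wr (\mathcal{A}')_{G'/H'}$. Separability of $\mathcal{A}_1$ and $\mathcal{A}_2$ provides isomorphisms $f_1$ and $f_2$ on the factor and the quotient. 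The main obstacle is that $G'$ need not split as a direct product over $H'$, so the global isomorphism must be assembled fiber-by-fiber: fixing any lift $t: G'/H' \to G'$ with $t(e') = e'$, I would set $f(x_1, x_2) := t(f_2(x_2)) f_1(x_1)$ and verify from the wreath basic-set description that $f$ sends each basic set of $\mathcal{A}_1 \wr \mathcal{A}_2$ onto its $\varphi$-image. This verification is bookkeeping but constitutes the most delicate part of the argument.
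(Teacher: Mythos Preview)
Your argument is correct. The paper, however, does not prove this lemma at all: its entire proof is the single sentence ``Follows from \cite[Theorem~1.20]{E}'', citing Evdokimov's thesis. So there is no internal proof to compare against; you have supplied a self-contained argument where the paper defers to an external reference.

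Your approach is the natural one and is presumably close to what the cited theorem does: use that algebraic isomorphisms respect $\mathcal{A}$-subgroups, sections, and radicals (the paper's $(3)$) to show that the tensor/wreath structure transfers to $\mathcal{A}'$, then handle the factors separately. The fiberwise assembly $f(x_1,x_2)=t(f_2(x_2))f_1(x_1)$ in the wreath case is exactly the right construction; in the abelian setting the verification that $R(X_1\times\{e_2\})^f=R(X_1^{\varphi_1})$ goes through because the conjugation by $t(f_2(g_2))$ is trivial. One minor remark: your justification that $H_1'\cap H_2'=\{e'\}$ is a bit compressed---the cleanest way is to apply $\varphi$ to the ring identity $\underline{H_1}\cdot\underline{H_2}=\underline{G}$ and read off both $H_1'H_2'=G'$ and $|H_1'\cap H_2'|=1$ from the coefficients---but the idea is sound.
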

	
\begin{proof}
Follows from~\cite[Theorem~1.20]{E}.
\end{proof}

\begin{lemm}{\em \cite[Lemma 2.1] {EP3}}\label{uniq}
Let $\mathcal{A}$ and $\mathcal{A}^{'}$ be $S$-rings over $G$ and $G^{'}$ respectively. Let $\mathcal{B}$ be the $S$-ring generated by $\mathcal{A}$ and an element $\xi\in \mathbb{Z}G$ and $\mathcal{B}^{'}$ be the $S$-ring generated by $\mathcal{A}^{'}$ and an element $\xi^{'}\in \mathbb{Z}G^{'}$.  Then given algebraic isomorphism  $\varphi:\mathcal{A}\rightarrow \mathcal{A}^{'}$ there is at most one algebraic isomorphism $\psi:\mathcal{B}\rightarrow \mathcal{B}^{'}$ extending $\varphi$ and such that $\psi(\xi)=\xi^{'}$.
\end{lemm}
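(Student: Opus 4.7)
This is a uniqueness statement, so my plan is to assume that $\psi_1,\psi_2\colon \mathcal{B}\rightarrow \mathcal{B}^{'}$ are two algebraic isomorphisms extending $\varphi$ with $\psi_i(\xi)=\xi^{'}$, and then deduce $\psi_1=\psi_2$ by a ``generation'' argument on the set
\[
T=\{\eta\in \mathcal{B}:\psi_1(\eta)=\psi_2(\eta)\}.
\]
The objective is to show $T=\mathcal{B}$, which I would achieve by verifying that $T$ is closed under the operations that generate $\mathcal{B}$ as an $S$-ring from $\mathcal{A}\cup\{\xi\}$: $\mathbb{Z}$-linear combinations, multiplication, the involution $\eta\mapsto \eta^{*}$ (where $(\sum c_g g)^{*}=\sum c_g g^{-1}$), and the Schur--Wielandt projection $\eta\mapsto \eta[c]$ that extracts the sum of those group elements whose coefficient in $\eta$ equals a prescribed integer $c$.

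Closure under linear combinations and products is immediate from the fact that each $\psi_i$ is a $\mathbb{Z}$-linear ring homomorphism. The Schur--Wielandt step is the decisive one: since each $\psi_i$ sends basic sums to basic sums, i.e.\ $\psi_i(\underline{X})=\underline{X^{\psi_i}}$ for every $X\in \mathcal{S}(\mathcal{B})$, for $\eta=\sum_{X\in \mathcal{S}(\mathcal{B})}\lambda_X \underline{X}$ the element $\psi_i(\eta)=\sum_X \lambda_X \underline{X^{\psi_i}}$ has exactly the same multiset of coefficient values as $\eta$. Consequently $\psi_i(\eta[c])=\psi_i(\eta)[c]$, and if $\eta\in T$ then $\eta[c]\in T$. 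The same observation, combined with the identity $(X^{-1})^{\psi_i}=(X^{\psi_i})^{-1}$ valid for every algebraic isomorphism (noted in the text preceding Lemma~\ref{burn}), gives $\psi_i(\eta^{*})=\psi_i(\eta)^{*}$, so $T$ is also closed under the involution.

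To close the argument I would note that $T$ contains $\mathcal{A}$ (both $\psi_i$ restrict to $\varphi$ there) and contains $\xi$ by hypothesis. Since $\mathcal{B}$ is by definition the smallest $S$-ring containing $\mathcal{A}\cup\{\xi\}$, it is precisely the closure of $\mathcal{A}\cup\{\xi\}$ under the four operations listed above; hence $T=\mathcal{B}$ and $\psi_1=\psi_2$. I do not anticipate a substantive obstacle: the only point requiring any care is the compatibility of an algebraic isomorphism with the Schur--Wielandt projection, which is where the $S$-ring structure (as opposed to that of an arbitrary subring of $\mathbb{Z}G$) actually enters. Everything else is formal manipulation of ring homomorphisms.
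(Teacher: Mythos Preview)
Your argument is correct and is the natural way to prove this uniqueness statement. Note, however, that the paper does not actually give a proof of this lemma: it simply quotes it from \cite[Lemma~2.1]{EP3}, so there is no in-paper argument to compare against. Your proof is precisely the standard one: the set $T$ on which $\psi_1$ and $\psi_2$ agree is closed under $\mathbb{Z}$-linear combinations, products, the involution, and the Schur--Wielandt projection (the last because an algebraic isomorphism permutes basic sums and therefore preserves the coefficient of each group element), and the $S$-ring generated by $\mathcal{A}$ and $\xi$ is exactly the closure of $\mathcal{A}\cup\{\xi\}$ under these operations.
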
	
	
Following \cite{MP} we say that an $S$-ring $\mathcal{A}$  is \emph{quasi-thin} if $|X|\leq 2$ for every $X\in~\mathcal{S}(\mathcal{A})$.

\begin{lemm}\label{quasithin}
 Let $\mathcal{A}$ be a quasi-thin $S$-ring over  $G$. Suppose that there are no $\mathcal{A}$-subgroups $H$ in $G$ such that $H\cong C_2\times C_2,~ \mathcal{A}_H=\mathbb{Z}H$, and $\mathcal{A}_{G/H}=\mathbb{Z}(G/H)$. Then $\mathcal{A}$ is separable with respect to the class of all  $S$-rings.
\end{lemm}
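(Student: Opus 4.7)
My plan is to combine the Muzychuk--Ponomarenko structural theory of quasi-thin $S$-rings from \cite{MP} with an induction on $|G|$. That theory establishes that every quasi-thin $S$-ring $\mathcal{A}$ is either a nontrivial tensor or wreath product of two smaller quasi-thin $S$-rings, or belongs to an explicitly described list of indecomposable (``primitive'') examples.

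In the decomposable case, suppose $\mathcal{A}=\mathcal{A}_1\otimes\mathcal{A}_2$ or $\mathcal{A}=\mathcal{A}_1\wr\mathcal{A}_2$ with each $\mathcal{A}_i$ a quasi-thin $S$-ring over a proper $\mathcal{A}$-subgroup of $G$. I would first check that the hypothesis of the lemma passes to each factor: a forbidden $C_2\times C_2$-subsection of any $\mathcal{A}_i$ would lift to a forbidden subsection of $\mathcal{A}$ itself, contradicting the assumption. By induction each $\mathcal{A}_i$ is separable with respect to the class of all $S$-rings, and Lemma~\ref{schurtens} then yields separability of $\mathcal{A}$.

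For the primitive case I would go through the list in \cite{MP}. In each primitive quasi-thin $S$-ring the ring is either the full group ring $\mathbb{Z}G$, of rank~$2$, or cyclotomic over an explicit small abelian group. The first two are separable by the remarks following Proposition~\ref{connect}. In the cyclotomic subcases, given an algebraic isomorphism $\varphi:\mathcal{A}\to\mathcal{A}'$, formula~$(3)$ forces $\varphi$ to preserve radicals, generated subgroups, and cardinalities of basic sets; inspecting the basic sets then shows $\mathcal{A}'$ must be cyclotomic of the same type, and a Cayley isomorphism between the two underlying groups realises $\varphi$ combinatorially.

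The main obstacle is the primitive step, which requires checking separability case by case in the classification. The excluded $C_2\times C_2$-section is precisely the known local obstruction to separability coming from \cite[Section~$3$]{Klin}, and our hypothesis is designed specifically to rule it out; once it is gone, every remaining primitive quasi-thin $S$-ring falls into one of the handled types. In practice the entire statement is established in \cite{MP}, so the cleanest proof is a direct appeal to that result.
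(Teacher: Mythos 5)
Your closing remark---that the cleanest proof is a direct appeal to \cite{MP}---is in fact exactly the paper's proof: one passes to the corresponding Cayley scheme $\mathcal{C}(\mathcal{A})$, which is quasi-thin, invokes \cite[Theorem~1.1]{MP} (every quasi-thin Cayley scheme that is not a Klein scheme is separable with respect to all Cayley schemes), and observes that $\mathcal{C}(\mathcal{A})$ being a Klein scheme would yield precisely the forbidden $\mathcal{A}$-subgroup $H\cong C_2\times C_2$ with $\mathcal{A}_H=\mathbb{Z}H$ and $\mathcal{A}_{G/H}=\mathbb{Z}(G/H)$. The hypothesis of the lemma is nothing other than the exclusion of the Klein case, and Proposition~\ref{connect} transfers separability back to $\mathcal{A}$. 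If you state only this, your proof and the paper's coincide.

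The longer route you sketch first, however, has genuine gaps and should not be offered as an alternative argument. First, \cite{MP} does not supply the structure theorem you lean on (that every quasi-thin $S$-ring is a nontrivial tensor or wreath product of smaller quasi-thin $S$-rings or lies on an explicit primitive list); its classification concerns quasi-thin association schemes in general, and its usable output here is the separability statement itself. Second, the inheritance step of your induction is false: if $\mathcal{A}=\mathcal{A}_1\otimes\mathcal{A}_2$ and $H_1\leq G_1$ is a forbidden subgroup for $\mathcal{A}_1$, then $\mathcal{A}_{G/H_1}\cong(\mathcal{A}_1)_{G_1/H_1}\otimes\mathcal{A}_2$, which is the full group ring only when $\mathcal{A}_2=\mathbb{Z}G_2$, so a forbidden configuration in a factor need not lift to one in $\mathcal{A}$. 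Concretely, the factor $\mathbb{Z}(C_2\times C_2)$ violates the hypothesis (take $H=G_1$) yet is separable, and the tensor product containing it may satisfy the hypothesis; your induction would stall on such a factor unless you add separate arguments. Third, the ``primitive case'' analysis is only gestured at, and it is precisely where all the work would lie. None of this machinery is needed: the one-line reduction to \cite[Theorem~1.1]{MP} is the proof.
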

 
\begin{proof}

If  $\mathcal{A}$ is quasi-thin then the corresponding Cayley scheme  $\mathcal{C}(\mathcal{A})$ is also quasi-thin, i.e. every basic relation of  $\mathcal{C}(\mathcal{A})$ has valency at most~$2$. From \cite[Theorem 1.1]{MP} it follows that every quasi-thin Cayley scheme which is not a Klein scheme (see \cite[p.2]{MP}), is separable with respect to the class of all Cayley schemes. If $\mathcal{C}(\mathcal{A})$ is a Klein scheme then there exists an $\mathcal{A}$-subgroup $H$ of $G$ such that $H\cong C_2\times C_2,~ \mathcal{A}_H=\mathbb{Z}H$, and $\mathcal{A}_{G/H}=\mathbb{Z}(G/H)$. However, this contradicts to the assumption of the lemma. Therefore $\mathcal{C}(\mathcal{A})$ and $\mathcal{A}$ are separable with respect to the class of all Cayley schemes and the class of all $S$-rings respectively.
\end{proof}

\section{Generalized wreath product}

	Let $\mathcal{A}$ be an  $S$-ring over $G$ and $U/L$ be an $\mathcal{A}$-section. We say that  $\mathcal{A}$ is the \emph{generalized wreath product} or \emph{$U/L$-wreath product} if $L \unlhd G$ and $L\leq \rad(X)$ for every $X\in \mathcal{S}(\mathcal{A})$ outside $U$. The generalized wreath product is  called \emph{nontrivial} or \emph{proper} if $L\neq 1$ and $U\neq G$. If $U=L$ then $\mathcal{A}$ coincides with the wreath product of  $\mathcal{A}_L$ and $\mathcal{A}_{G/L}$.

The main goal of this section is to prove a sufficient condition of separability for the generalized wreath product  over an abelian group. Before this we formulate some additional statements required for the proof.

If $f:G\rightarrow G^{'}$ is a bijection and $X\subseteq G$ then the restriction of  $f$  on $X$  is denoted by $f^X$. Let $H\leq G$. Given $X,Y\in G/H$  put 
$$G_{X\rightarrow Y}=\{f^X: f\in G_{right}, X^f=Y\}.$$ 

In the following three lemmas  $\mathcal{A}$ and $\mathcal{A}^{'}$ are $S$-rings over groups $G$ and $G^{'}$ respectively and $\varphi:\mathcal{A}\rightarrow \mathcal{A}^{'}$ is an algebraic isomorphism.

\begin{lemm}{\em\cite[Lemma 3.4]{EP4}}\label{simres}
If $f \in \iso(\mathcal{A},\mathcal{A}^{'},\varphi)$, $H$ is an $\mathcal{A}$-subgroup of $G$, and $H^{'}=H^{\varphi}$ then
$$hf^Xh^{'}\in \iso(\mathcal{A}_H,\mathcal{A}^{'}_{H^{'}},\varphi_H)$$
for all $X\in G/H,~h\in G_{H\rightarrow X}$, and $h^{'}\in G^{'}_{X^{'}\rightarrow H^{'}}$, where $X^{'}=X^f$.
\end{lemm}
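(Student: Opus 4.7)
The plan is to reduce everything to equation~$(2)$ together with the elementary fact that a combinatorial isomorphism which maps an $\mathcal{A}$-subgroup onto its $\varphi$-image restricts to a combinatorial isomorphism of the induced $S$-rings.

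First I unfold the definitions of $G_{H\to X}$ and $G^{'}_{X^{'}\to H^{'}}$: there exist $\tilde h\in G_{right}$ with $H^{\tilde h}=X$ and $\tilde h^{'}\in G^{'}_{right}$ with $(X^{'})^{\tilde h^{'}}=H^{'}$ whose restrictions to $H$ and $X^{'}$ are $h$ and $h^{'}$ respectively. Equation~$(2)$ then gives $\tilde f:=\tilde h\, f\,\tilde h^{'}\in \iso(\mathcal{A},\mathcal{A}^{'},\varphi)$. Tracing the chain $H\xrightarrow{\tilde h}X\xrightarrow{f}X^{'}\xrightarrow{\tilde h^{'}}H^{'}$, I see that $\tilde f$ sends $H$ onto $H^{'}$ and that $\tilde f^H=h\,f^X\,h^{'}$. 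Thus the whole claim reduces to the following general statement: \emph{if $F\in \iso(\mathcal{A},\mathcal{A}^{'},\varphi)$ satisfies $H^F=H^{'}$ for an $\mathcal{A}$-subgroup $H$ with $H^\varphi=H^{'}$, then $F^H\in \iso(\mathcal{A}_H,\mathcal{A}^{'}_{H^{'}},\varphi_H)$.}

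To verify this reduction step I note that the basic sets of $\mathcal{A}_H$ are precisely those $Y\in \mathcal{S}(\mathcal{A})$ with $Y\subseteq H$ (the quotient map is trivial, since here $L=\{e\}$). For any such $Y$ one has $Y^F=Y^\varphi\subseteq H^F=H^{'}$, so $Y^F$ is exactly the basic set of $\mathcal{A}^{'}_{H^{'}}$ corresponding to $Y$ under $\varphi_H$. The Cayley graph condition for $F^H$ on $\cay(H,Y)$ and $\cay(H^{'},Y^F)$ is inherited by restriction from the Cayley graph isomorphism $F\colon \cay(G,Y)\to \cay(G^{'},Y^F)$, because every edge of $\cay(G,Y)$ incident to a vertex of $H$ has its other endpoint in $H$ as well (since $Y\subseteq H$), and symmetrically on the target side; the induced subgraphs on $H$ and $H^{'}$ are precisely $\cay(H,Y)$ and $\cay(H^{'},Y^F)$.

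The only genuine input is equation~$(2)$, which provides the two-sided normalization of $\iso(\mathcal{A},\mathcal{A}^{'},\varphi)$ by the regular actions and lets me assemble the local data $(h,f^X,h^{'})$ into a single global isomorphism whose restriction to $H$ is the desired map. I do not foresee any serious obstacle: once the reduction is made, the rest is routine bookkeeping about basic sets of an $\mathcal{A}$-subgroup.
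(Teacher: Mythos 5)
The paper offers no proof of this lemma to compare against: it is imported verbatim from \cite[Lemma 3.4]{EP4}. Judged on its own, your argument has the right architecture and is essentially the standard one: the two-sided normalization via equation $(2)$, the bookkeeping along $H\rightarrow X\rightarrow X^{'}\rightarrow H^{'}$ showing $\tilde f^{H}=hf^{X}h^{'}$ with $H^{\tilde f}=H^{'}$, and the reduction to the case of an isomorphism carrying $H$ onto $H^{'}$ are all correct. One step needs tightening. You assert that for a basic set $Y\subseteq H$ one has $Y^{F}=Y^{\varphi}$ as subsets of $G^{'}$. Under the definition of isomorphism actually in force (an isomorphism of the associated Cayley schemes, Section~2 --- the definition under which equation $(2)$ holds, since right translations move point sets but fix every basic relation), $F$ need not send $e$ to $e^{'}$; one only gets $F(Y)=Y^{\varphi}\cdot e^{F}$ with $e^{F}\in H^{'}$, so the point-set image of $Y$ is generally a translate of $Y^{\varphi}$, not $Y^{\varphi}$ itself. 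The correct invariant is at the level of relations: $R(Y)^{F}=R(Y^{\varphi})$. Your induced-subgraph argument survives once phrased this way: since $Y\subseteq H$, the restriction $R(Y)\cap(H\times H)$ is exactly the basic relation of $\mathcal{C}(\mathcal{A}_{H})$ attached to $Y$, and $F$ carries it onto $R(Y^{\varphi})\cap(H^{'}\times H^{'})$, which is precisely what membership in $\iso(\mathcal{A}_{H},\mathcal{A}^{'}_{H^{'}},\varphi_{H})$ requires; that membership is itself insensitive to whether $e$ maps to $e^{'}$, by $(2)$ applied to $\mathcal{A}_{H}$ and $\mathcal{A}^{'}_{H^{'}}$. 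With that correction the proof is complete.
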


\begin{lemm} {\em\cite[Theorem 3.3, $1$]{EP4}}\label{simgwr}
Let $G$ and $G^{'}$ be abelian and $U/L$ be an $\mathcal{A}$-section of $G$. Suppose that $\mathcal{A}$ is the $U/L$-wreath product, $U^{'}=U^{\varphi}$, and $L^{'}=L^{\varphi}$. Then $\mathcal{A}^{'}$ is the  $U^{'}/L^{'}$-wreath product.
\end{lemm}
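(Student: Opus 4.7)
The goal is to verify the defining condition of the $U'/L'$-wreath product for $\mathcal{A}'$: namely, that $L' \unlhd G'$ (which is automatic because $G'$ is abelian) and that $L' \leq \rad(X')$ for every basic set $X'$ of $\mathcal{A}'$ with $X' \not\subseteq U'$. The natural strategy is to take an arbitrary such $X'$, pull it back along $\varphi^{-1}$ to a basic set $X$ of $\mathcal{A}$, check that $X \not\subseteq U$, invoke the hypothesis on $\mathcal{A}$ to obtain $L \leq \rad(X)$, and then transport this containment back to $\mathcal{A}'$ via $\varphi$.

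The first step is to verify that $\varphi$ preserves set-theoretic containment between $\mathcal{A}$-sets and $\mathcal{A}$-subgroups. This is an immediate consequence of $\varphi$ being a ring isomorphism under which basic elements $\underline{Z}$ go to $\underline{Z^{\varphi}}$: if $H$ is an $\mathcal{A}$-subgroup, then $\underline{H}=\sum_{Z \in \mathcal{S}(\mathcal{A}),\,Z \subseteq H}\underline{Z}$, and applying $\varphi$ writes $\underline{H^{\varphi}}$ as the sum of the corresponding $\underline{Z^{\varphi}}$. Thus $Z \subseteq H$ iff $Z^{\varphi} \subseteq H^{\varphi}$. Taking $H=U$, the assumption $X' \not\subseteq U'$ forces $X \not\subseteq U$, and so by the $U/L$-wreath hypothesis on $\mathcal{A}$ we get $L \leq \rad(X)$.

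The second step exploits equation~$(3)$ from the excerpt, which gives $\rad(X^{\varphi})=\rad(X)^{\varphi}$. Both $L$ and $\rad(X)$ are $\mathcal{A}$-subgroups with $L \leq \rad(X)$, so the preservation of containment just established yields $L'=L^{\varphi} \leq \rad(X)^{\varphi}=\rad(X^{\varphi})=\rad(X')$. Together with the normality of $L'$ in the abelian group $G'$, this is exactly the condition that $\mathcal{A}'$ is the $U'/L'$-wreath product.

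I do not expect any serious obstacle in this argument: the entire content is that the ingredients defining a generalized wreath product (being an $\mathcal{A}$-subgroup, being a radical, and containment among such objects) are intrinsic invariants of $\mathcal{A}$ as an abstract ring equipped with its distinguished basis, and are therefore automatically preserved by any algebraic isomorphism. The only mildly nontrivial point is the preservation of containment, and even that reduces to the elementary observation in the previous paragraph.
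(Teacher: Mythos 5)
Your argument is correct. Note that the paper itself gives no proof of this lemma: it is quoted verbatim from \cite[Theorem~3.3(1)]{EP4}, so there is nothing internal to compare against. Your direct verification is a valid self-contained replacement: the only ingredients are (i) the fact, recorded in Section~3, that $\varphi$ extends to a bijection between $\mathcal{A}$- and $\mathcal{A}^{'}$-sets (from which containment preservation, $Z\subseteq H \Leftrightarrow Z^{\varphi}\subseteq H^{\varphi}$, is immediate, since $H^{\varphi}$ is by definition the union of the $Z^{\varphi}$ over basic $Z\subseteq H$), and (ii) equation~$(3)$, $\rad(X^{\varphi})=\rad(X)^{\varphi}$. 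One small point worth making explicit is that a basic set of $\mathcal{A}^{'}$ ``outside $U^{'}$'' is the same as one not contained in $U^{'}$, because $U^{'}$ is an $\mathcal{A}^{'}$-subgroup and hence a union of basic sets; with that observation your pull-back step is airtight, and the normality of $L^{'}$ is indeed automatic in the abelian case.
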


\begin{lemm}{\em\cite[Theorem 3.5]{EP4}}\label{simgwr2}
In the conditions of Lemma~\ref{simgwr}  the set $\iso(\mathcal{A},\mathcal{A}^{'},\varphi)$ consists of all bijections  $f:G \rightarrow G^{'}$ possessing the following properties:

$1)$ $(G/U)^f=G^{'}/U^{'},~(G/L)^f=G^{'}/L^{'}$,  

$2)$ $f^{G/L}\in \iso(\mathcal{A}_{G/L},\mathcal{A}^{'}_{G^{'}/L^{'}},\varphi_{G/L})$,

$3)$ if $X\in G/U$ and $X^{'}=X^f$ then there exist $g\in G_{U\rightarrow X}$ and $g^{'}\in G^{'}_{X^{'}\rightarrow U^{'}}$ such that  $gf^Xg^{'}\in \iso(\mathcal{A}_{U},\mathcal{A}^{'}_{U^{'}},\varphi_{U})$.

\end{lemm}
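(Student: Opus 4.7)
The statement is a characterization, so I verify both implications separately.

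For the forward direction, assume $f \in \iso(\mathcal{A},\mathcal{A}',\varphi)$. Property~(1) follows because $\varphi$ sends the $\mathcal{A}$-subgroups $U$ and $L$ to the $\mathcal{A}'$-subgroups $U'$ and $L'$, and a combinatorial isomorphism inducing $\varphi$ must respect the corresponding partitions of $G$ into cosets. Property~(2) is the general principle that such an $f$ restricts to a combinatorial isomorphism of any matched pair of quotient $S$-rings and induces the corresponding restriction of $\varphi$. Property~(3) is exactly Lemma~\ref{simres} applied with $H = U$.

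The reverse direction is the substantive step. Given $f$ satisfying (1)--(3), the goal is to establish $R(X)^f = R(X^\varphi)$ for every $X \in \mathcal{S}(\mathcal{A})$. The generalized wreath product hypothesis splits the basic sets into two types --- those contained in $U$ and those disjoint from $U$ --- and I would handle each type separately. For $X \subseteq U$, I would decompose $R(X) = \bigsqcup_{Y \in G/U} R(X) \cap (Y \times Y)$; this is legitimate because $X \subseteq U$ forces each pair $(g, xg)$ to lie in a single $U$-coset. Condition~(3) then yields translations $g \in G_{U\to Y}$ and $g' \in G'_{Y^f \to U'}$ for which $gf^Y g' \in \iso(\mathcal{A}_U, \mathcal{A}'_{U'}, \varphi_U)$, so this translated bijection sends the ``$U \times U$ piece'' of $R(X)$ to the corresponding piece of $R(X^\varphi)$. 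Equation~$(2)$ of Section~2 lets me untranslate and conclude that $f^Y$ itself maps the $Y$-piece of $R(X)$ to the $Y^f$-piece of $R(X^\varphi)$; summing over $Y$ gives the claim. For $X \not\subseteq U$, the radical hypothesis $L \leq \rad(X)$ means $X$ is a union of $L$-cosets and $R(X)$ is the preimage under $\pi_L \times \pi_L$ of a single basic relation of the quotient scheme $\mathcal{C}(\mathcal{A}_{G/L})$. Conditions~(1) and~(2) then give $R(X)^f = R(X^\varphi)$ by lifting the statement from the quotient.

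The main obstacle I anticipate is the bookkeeping required to see that the per-$U$-coset data from~(3) and the quotient data from~(2) glue into a single isomorphism inducing exactly $\varphi$, rather than some other algebraic isomorphism agreeing on $\varphi_U$ and $\varphi_{G/L}$. What makes the gluing go through is precisely the wreath-product hypothesis: $L \leq \rad(X)$ for $X$ outside $U$ decouples the coarse data on $G/L$ from the fine data on each $U$-coset, so that the two constraints only interact on the section $U/L$, where $\varphi_U$ and $\varphi_{G/L}$ restrict compatibly. By Lemma~\ref{simgwr}, $\mathcal{A}'$ is also a $U'/L'$-wreath product, which is the structural reason the same decoupling is available symmetrically on the target side.
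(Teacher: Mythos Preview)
The paper does not prove this lemma; it is stated with the citation \cite[Theorem~3.5]{EP4} and no proof is given. There is therefore nothing in the present paper to compare your proposal against.

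That said, your outline is a reasonable reconstruction of how such a characterization is proved. The forward direction is routine, and your invocation of Lemma~\ref{simres} for property~(3) is exactly right. For the reverse direction, the split into $X\subseteq U$ versus $X\not\subseteq U$ and the use of the $U$-coset decomposition of $R(X)$ in the first case, together with the lift from $G/L$ in the second, is the standard approach and matches the argument in the cited source. Your remark about why the gluing works --- that the wreath-product hypothesis decouples the fine data on $U$-cosets from the coarse data on $G/L$, with compatibility only required on $U/L$ --- correctly identifies the structural point.
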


\begin{lemm}\label{sepwr}
Let $\mathcal{A}$ be the $U/L$-wreath product over an abelian group $G$. Suppose that $\mathcal{A}_U$ and $\mathcal{A}_{G/L}$ are separable  with respect to $\mathcal{K}_A$ and $\aut(\mathcal{A}_U)^{U/L}=\aut(\mathcal{A}_{U/L})$. Then $\mathcal{A}$ is separable with respect to $\mathcal{K}_A$.
\end{lemm}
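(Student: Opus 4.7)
The plan is to verify the conditions of Lemma~\ref{simgwr2} by constructing a suitable bijection $f\colon G\to G'$. Let $\varphi\colon\mathcal{A}\to\mathcal{A}'$ be an algebraic isomorphism to an $S$-ring over an abelian group $G'$, and set $U'=U^\varphi$, $L'=L^\varphi$. First I would invoke Lemma~\ref{simgwr} to conclude that $\mathcal{A}'$ is a $U'/L'$-wreath product; hence Lemma~\ref{simgwr2} characterizes $\iso(\mathcal{A},\mathcal{A}',\varphi)$ and it suffices to exhibit a single element. Separability of $\mathcal{A}_{G/L}$ and $\mathcal{A}_U$ (note that $G'/L'$ and $U'$ are again abelian) yields some $f_1\in\iso(\mathcal{A}_{G/L},\mathcal{A}'_{G'/L'},\varphi_{G/L})$ and $f_2\in\iso(\mathcal{A}_U,\mathcal{A}'_{U'},\varphi_U)$. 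Since $U/L$ is an $\mathcal{A}_{G/L}$-subgroup, $f_1$ induces a bijection $\hat f_1\colon G/U\to G'/U'$.

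Next, for each coset $X\in G/U$ I would fix representatives $x\in X$ and $y\in X':=\hat f_1(X)$. Applying Lemma~\ref{simres} to the $S$-ring $\mathcal{A}_{G/L}$, with $\mathcal{A}_{G/L}$-subgroup $U/L$ and its coset $X/L$, the map
$$\tau_X\colon U/L\to U'/L',\qquad \tau_X(uL)=u'L'\ \text{where}\ f_1(uxL)=u'yL',$$
belongs to $\iso(\mathcal{A}_{U/L},\mathcal{A}'_{U'/L'},\varphi_{U/L})$. The heart of the argument is to lift each $\tau_X$ to some $h_X\in\iso(\mathcal{A}_U,\mathcal{A}'_{U'},\varphi_U)$ with $h_X^{U/L}=\tau_X$. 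This is where the hypothesis enters: the image of the natural reduction map
$$\iso(\mathcal{A}_U,\mathcal{A}'_{U'},\varphi_U)=f_2\cdot\aut(\mathcal{A}_U)\ \longrightarrow\ \iso(\mathcal{A}_{U/L},\mathcal{A}'_{U'/L'},\varphi_{U/L})=f_2^{U/L}\cdot\aut(\mathcal{A}_{U/L})$$
equals $f_2^{U/L}\cdot\aut(\mathcal{A}_U)^{U/L}$, which coincides with the target precisely under the assumption $\aut(\mathcal{A}_U)^{U/L}=\aut(\mathcal{A}_{U/L})$.

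Finally I would define $f\colon G\to G'$ by $f(ux)=h_X(u)\cdot y$ for $u\in U$ and $X=Ux$, and verify the three conditions of Lemma~\ref{simgwr2}. Condition~(1) is immediate. Condition~(2) holds because the reduction of $f$ modulo $L$ sends $uxL$ to $h_X(u)yL'=\tau_X(uL)\cdot yL'=f_1(uxL)$, so it equals $f_1$. For condition~(3), let $g\in G_{U\to X}$ be right multiplication by $x$ and $g'\in G'_{X'\to U'}$ be right multiplication by $y^{-1}$; then $gf^Xg'=h_X\in\iso(\mathcal{A}_U,\mathcal{A}'_{U'},\varphi_U)$ by construction. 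This confirms $f\in\iso(\mathcal{A},\mathcal{A}',\varphi)$.

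The main obstacle is exactly the lifting step: the local isomorphisms $h_X$ must simultaneously be combinatorial isomorphisms of $\mathcal{A}_U$ inducing $\varphi_U$ and have the prescribed reductions $\tau_X$ dictated by a previously chosen $f_1$. Without the assumption $\aut(\mathcal{A}_U)^{U/L}=\aut(\mathcal{A}_{U/L})$ the admissible reductions form only the possibly proper coset $f_2^{U/L}\cdot\aut(\mathcal{A}_U)^{U/L}$, and a given $\tau_X$ may fail to lie there. The hypothesis is precisely what guarantees solvability of this compatibility problem for every coset $X$ at once.
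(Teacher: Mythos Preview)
Your proposal is correct and follows essentially the same route as the paper: invoke Lemma~\ref{simgwr}, pick isomorphisms inducing $\varphi_{G/L}$ and $\varphi_U$ from separability, use Lemma~\ref{simres} on the $\mathcal{A}_{G/L}$-subgroup $U/L$ to obtain the coset-level maps $\tau_X$, lift each $\tau_X$ via the hypothesis $\aut(\mathcal{A}_U)^{U/L}=\aut(\mathcal{A}_{U/L})$, assemble $f$ coset by coset, and verify Lemma~\ref{simgwr2}. The only cosmetic differences are that the paper names the two isomorphisms in the opposite order and phrases the lifting step as finding $h_X\in\aut(\mathcal{A}_U)$ (so that your $h_X$ corresponds to the paper's $h_Xf_1$); your coset formulation of the reduction map makes the role of the hypothesis especially transparent.
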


\begin{proof}
Let $\mathcal{A}^{'}$ be an $S$-ring over an abelian group  $G^{'}$, $\varphi:\mathcal{A}\rightarrow \mathcal{A}^{'}$ be an algebraic isomorphism, and $U^{'}=U^{\varphi},L^{'}=L^{\varphi}$. Then from Lemma~\ref{simgwr} it follows that $\mathcal{A}^{'}$ is the $U^{'}/L^{'}$-wreath product.  Since $\mathcal{A}_U$ and $\mathcal{A}_{G/L}$ are separable with respect to $\mathcal{K}_A$, the algebraic isomorphisms 
$$\varphi_U:\mathcal{A}_U\rightarrow \mathcal{A}^{'}_{U^{'}},~\varphi_{G/L}:\mathcal{A}_{G/L}\rightarrow \mathcal{A}^{'}_{G^{'}/L^{'}}$$
are induced by some isomorphisms  $f_1$ and $f_2$ respectively. Let $X\in G/U$. We can consider $X$ as a subset of  $G/L$ because $X$ is a union of some cosets of $L$. Put $X^{'}=X^{f_2}$. Choose $g\in G_{U\rightarrow X}$ and $g^{'}\in G^{'}_{X^{'}\rightarrow U^{'}}$. The bijection  $g^{U/L}f_2^{X/L}g^{'X^{'}/L^{'}}$ induces the algebraic isomorphism $\varphi_{U/L}$ by Lemma~\ref{simres} applied to $\mathcal{A}_{G/L}$-subgroup $U/L$ of $G/L$. Put
$$f_0=g^{U/L}f_2^{X/L}g^{'X^{'}/L^{'}}(f_1^{U/L})^{-1}.$$
Since $f_1^{U/L}$ also induces  $\varphi_{U/L}$, we conclude that $f_0\in \aut(\mathcal{A}_{U/L})$. There exists  $h_X\in \aut(\mathcal{A}_{U})$ such that $h_X^{U/L}=f_0$ because  $\aut(\mathcal{A}_{U/L})=\aut(\mathcal{A}_{U})^{U/L}$. Put
$$f_X=g^{-1}h_Xf_1(g^{'})^{-1}.$$ 
Let $f:G\rightarrow G^{'}$ be the bijection whose restriction on  $X$ coincides with $f_X$ for every $X\in G/U$. Let us check that  $f$ possesses Properties $1$-$3$ from Lemma~\ref{simgwr2}. It is clear that $(G/U)^f=G^{'}/U^{'}$ and $(G/L)^f=G^{'}/L^{'}$ and hence  $f$ possesses Property~$1$. By the definition of $f$ we have  $gf^Xg^{'}=gf_Xg^{'}=h_Xf_1$. So from~$(2)$ it follows that for every $X\in G/U$ the bijection  $gf^Xg^{'}$ induces the  algebraic isomorphism  $\varphi_U$. It proves that $f$ possesses Property~$3$. The straightforward computations show that 

$$f^{X/L}=(g^{-1}h_Xf_1(g^{'})^{-1})^{X/L}=(g^{U/L})^{-1}g^{U/L}f_2^{X/L}g^{'X^{'}/L^{'}}(f_1^{U/L})^{-1}f_1^{U/L}(g^{'X^{'}/L^{'}})^{-1}\\=f_2^{X/L}$$ 
for every $X\in G/U$. Therefore  $f^{G/L}=f_2$ and $f^{G/L}$ induces $\varphi_{G/L}$. So $f$ possesses Property~$2$. Thus $f\in \iso(\mathcal{A},\mathcal{A}^{'},\varphi)$ by Lemma~\ref{simgwr2}.  

It is worth noting that in the proof of this lemma we followed, in general, the scheme of the proof  of separability of coset $S$-rings  over cyclic groups (\cite[p.33]{EP4}).
\end{proof}

\section{$S$-rings over cyclic groups}

\begin{lemm}\label{simcycl}
Let  $G$ be a cyclic group of order  $n\neq 4$ and $\mathcal{A}$ be an $S$-ring over  $G$  such that  $\mathcal{A}=\mathbb{Z}G$ or $\mathcal{A}=\cyc(K,G)$, where $K=\{\varepsilon,\sigma\},~\sigma:x\rightarrow x^{-1}$. Suppose that $\varphi$ is an algebraic isomorphism from $\mathcal{A}$ to an $S$-ring $\mathcal{A}^{'}$ over an abelian group $G^{'}$. Then $G^{'}\cong G$.
\end{lemm}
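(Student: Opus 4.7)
The plan is to exploit that an algebraic isomorphism preserves cardinalities of $\mathcal{A}$-sets (since $|X|=c^{\{e\}}_{X,X^{-1}}$), commutes with inversion, and, by formula (3), preserves the subgroup generated by any $\mathcal{A}$-set. Summing $|X^{\varphi}|=|X|$ over all basic sets gives $|G'|=|G|=n$; moreover $G^{\varphi}$ is an $\mathcal{A}'$-subgroup of $G'$ of order $n$, so $G^{\varphi}=G'$.

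If $\mathcal{A}=\mathbb{Z}G$, every basic set is a singleton. I would pick a generator $g$ of $G$ and set $X=\{g\}$, so that $\langle X\rangle=G$. Then $X^{\varphi}=\{g'\}$ is a singleton satisfying $\langle g'\rangle=\langle X^{\varphi}\rangle=\langle X\rangle^{\varphi}=G'$, so $G'$ is cyclic of order $n$ and therefore $G'\cong G$. Note this argument uses nothing about $n$.

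Now suppose $\mathcal{A}=\cyc(K,G)$. For $n\leq 2$ the involution $\sigma$ acts trivially, so $\mathcal{A}=\mathbb{Z}G$ and the previous paragraph applies. Assume $n\geq 3$ and pick a generator $g$ of $G$; since $g\neq g^{-1}$, the set $X=\{g,g^{-1}\}$ is a basic set of size $2$ with $\langle X\rangle=G$. Then $X^{\varphi}$ is a basic set of size $2$ closed under inversion, with $\langle X^{\varphi}\rangle=G'$. Writing $X^{\varphi}=\{a,b\}$ with $a\neq b$ and $\{a,b\}^{-1}=\{a,b\}$, there are just two possibilities: either (i) $b=a^{-1}$ with $a$ not an involution, in which case $G'=\langle a\rangle$ is cyclic of order $n$ and $G'\cong G$; or (ii) $a$ and $b$ are two distinct commuting involutions, in which case $G'=\langle a,b\rangle\cong C_2\times C_2$ has order $4$, forcing $n=4$, contrary to the hypothesis.

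The only obstacle is the Klein-four alternative (ii), and this is exactly what makes the hypothesis $n\neq 4$ necessary: there is indeed an algebraic isomorphism between $\cyc(K,C_4)$ and a suitable rank-$3$ $S$-ring over $C_2\times C_2$, so no purely algebraic argument can conclude $G'\cong G$ without excluding $n=4$. Everything else in the proof is a direct consequence of the preservation properties of $\varphi$ recorded above together with formula (3).
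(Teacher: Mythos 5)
Your proof is correct and follows essentially the same route as the paper's: both use that an algebraic isomorphism preserves cardinalities and inversion and, via formula (3), sends a basic set containing a generator to a generating set of $G'$, and then split the two-element case into the cyclic alternative and the Klein-four alternative excluded by $n\neq 4$. Your closing remark that an algebraic isomorphism from $\cyc(K,C_4)$ to a rank-$3$ $S$-ring over $C_2\times C_2$ really exists is a correct (and welcome) justification that the hypothesis $n\neq 4$ is necessary.
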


\begin{proof}

From the properties of  an algebraic isomorphism it follows that  $|G|=|G^{'}|=n$. Suppose that $X\in \mathcal{S}(\mathcal{A})$ contains a generator of $G$.  Then $G^{'}=\langle X^{\varphi} \rangle$ by~$(3)$.  If $|X|=1$ then $|X^{\varphi}|=1$ and hence $G^{'}$ is cyclic. If $|X|=2$ then $X=X^{-1}$. So   $|X^{\varphi}|=2$ and $(X^{\varphi})^{-1}=X^{\varphi}$ by the properties of an algebraic isomorphism. Therefore either $X^{\varphi}=\{x,x^{-1}\}$ for some $x\in G^{'}$ or $X^{\varphi}=\{x,y\}$ for some  $x,y\in G^{'}$ such that $|x|=|y|=2$. In the former case $G^{'}$ is cyclic and hence it is isomorphic to $G$; in the latter case $|G|=|G^{'}|=4$ that contradicts to the assumption of the lemma.
\end{proof}

Let $\mathcal{A}$ be an $S$-ring over a cyclic group $G$. Put $\rad(\mathcal{A})=\rad(X)$, where $X$ is a basic set of $\mathcal{A}$ containing a generator of $G$. Note that $\rad(\mathcal{A})$ does not depend on the choice of $X$. Indeed, if $Y\in \mathcal{S}(\mathcal{A})$, $\langle Y \rangle =G$, and $Y\neq X$ then  $X$ and $Y$ are rationally conjugate by Theorem~\ref{burn} and hence $\rad(X)=\rad(Y)$.

\begin{lemm} \label{circrad}
Let $p$ be a prime and $\mathcal{A}$ be an $S$-ring over a cyclic  $p$-group~$G$. Suppose that $\rad(\mathcal{A}) > e$. Then there exists an  $\mathcal{A}$-section  $U/L$ such that $\mathcal{A}$ is the proper $U/L$-wreath product and $\rad(\mathcal{A}_U)=e$.
\end{lemm}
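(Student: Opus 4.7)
The plan is to exploit the fact that in a cyclic $p$-group the $\mathcal{A}$-subgroups form a chain $e = H_0 < H_1 < \dots < H_m = G$, and to locate the desired section inside this chain. For each $i \geq 1$, I would fix a basic set $X_i$ of $\mathcal{A}$ contained in $H_i \setminus H_{i-1}$; such an $X_i$ exists because $H_i \setminus H_{i-1}$ is a union of basic sets of $\mathcal{A}$. Since $\langle X_i \rangle$ is an $\mathcal{A}$-subgroup strictly above $H_{i-1}$ and contained in $H_i$, one gets $\langle X_i \rangle = H_i$. Set $L_i := \rad(X_i)$; by the same argument used in the paragraph defining $\rad(\mathcal{A})$, Lemma~\ref{burn} implies that $L_i$ does not depend on the choice of $X_i$.

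Next, I would verify two boundary facts for the sequence $L_1, \dots, L_m$: namely $L_1 = e$ and $L_m < G$. Indeed, $L_i$ is an $\mathcal{A}$-subgroup with $L_i \leq H_i$, and $L_1 = H_1$ would force $X_1$ to be a union of $H_1$-cosets inside $H_1$, i.e. $X_1 = H_1$, contradicting $\{e\}\in \mathcal{S}(\mathcal{A})$; the same argument rules out $L_m = G$. Together with the hypothesis $L_m = \rad(\mathcal{A}) \neq e$ this implies $m \geq 2$, so one can define
$$ i^{\ast} := \max\{\, i : 1 \leq i < m,\ L_i = e\,\}. $$

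I would then take $U := H_{i^{\ast}}$ and $L := H_1$. Properness is immediate from $1 \leq i^{\ast} < m$: one has $e < L \leq U < G$, and by the choice of $i^{\ast}$ the equality $\rad(\mathcal{A}_U) = L_{i^{\ast}} = e$ holds. It remains to show that $\mathcal{A}$ is the $U/L$-wreath product. For every basic set $Z$ with $Z \not\subseteq U$, $\langle Z \rangle$ is an $\mathcal{A}$-subgroup not contained in $H_{i^{\ast}}$, so $\langle Z \rangle = H_j$ for some $j > i^{\ast}$; maximality of $i^{\ast}$ (and the hypothesis when $j = m$) yields $L_j > e$, hence $L_j \geq H_1 = L$. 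Since $Z$ and $X_j$ both generate $H_j$, Lemma~\ref{burn} makes them rationally conjugate, and therefore $\rad(Z) = L_j \geq L$, as required.

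The only real difficulty is guaranteeing simultaneous properness of $U$ and $L$. The naive choice $U = H_{m-1}$, $L = \rad(\mathcal{A})$ always gives a $U/L$-wreath decomposition but need not satisfy $\rad(\mathcal{A}_U) = e$, while blindly shrinking $U$ to force $L_{i^{\ast}} = e$ could push $i^{\ast}$ down to $0$ and make a nontrivial $L \leq U = e$ impossible. The rigidity $L_1 = e$ is precisely what keeps $i^{\ast}$ in the range $[1, m-1]$, and it is this boundary behaviour at the bottom of the chain that makes the construction go through.
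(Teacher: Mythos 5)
Your proposal is correct and takes essentially the same route as the paper: your $L=H_1$ is exactly the paper's least nontrivial $\mathcal{A}$-subgroup, and your $U=H_{i^{\ast}}$ coincides with the paper's $U$ (the subgroup generated by all basic sets with trivial radical), since a basic set has trivial radical precisely when it generates some $H_j$ with $L_j=e$. The verification also rests on the same two facts the paper uses — that the radical of a basic set is an $\mathcal{A}$-subgroup and that the $\mathcal{A}$-subgroups of a cyclic $p$-group form a chain — so the extra bookkeeping with the indices $L_1,\dots,L_m$ and rational conjugacy is just a more explicit rendering of the same argument.
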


\begin{proof}
Let $X$ be the union of all basic sets of $\mathcal{A}$ with the trivial radical. Then $U=\langle X \rangle$ is a proper $\mathcal{A}$-subgroup and  $\rad(\mathcal{A}_U)=e$. There exists the least nontrivial $\mathcal{A}$-subgroup $L$ of $G$ because $G$ is a cyclic $p$-group. All basic sets of $\mathcal{A}$ outside  $U$ have nontrivial radical. Since the radical of every basic set is an $\mathcal{A}$-subgroup, we conclude that $L\leq \rad(X)$ for every $X\in \mathcal{S}(\mathcal{A})$ outside $U$. Thus $\mathcal{A}$ is the proper $U/L$-wreath product. 
\end{proof}

Let $p\in\{2,3\}$ and $k\geq 1$. Set $A=\langle a\rangle$ and $a_1=a^{p^{k-1}}$, where $|a|=p^k$. These notations are valid until the end of this section.

\begin{lemm}\label{circ}
Let  $\mathcal{A}$ be an $S$-ring over $A$. Suppose that $\rad(\mathcal{A})=e$. Then one of the following statements holds:

$1)$ $\rk(\mathcal{A})=2;$  

$2)$ $\mathcal{A}=\mathbb{Z}A;$

$3)$ $\mathcal{A}=\cyc(K,A)$, where $K=\{\varepsilon,\sigma\},~\sigma:x\rightarrow x^{-1};$

$4)$ $p=2$ and $\mathcal{A}=\cyc(K,A)$, where $K=\{\varepsilon,\sigma\},~\sigma:x\rightarrow a_1x^{-1}.$

In all cases $\mathcal{A}$ is separable with respect to the class of all  $S$-rings.

\end{lemm}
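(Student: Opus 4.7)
The plan is to carry out a three-stage argument: (a) reduce to the cyclotomic case, (b) enumerate the admissible subgroups of $\aut(A)$, and (c) derive separability from the earlier general results.

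For (a), the rank~$2$ case yields case~$1$ directly, with separability coming from the discussion after~$(2)$. So I would assume $\rk(\mathcal{A})\geq 3$ and let $X$ denote the basic set containing a fixed generator $a$ of $A$. By Lemma~\ref{burn} every $X^{(m)}$ with $\gcd(m,p)=1$ is again a basic set. The hypothesis $\rad(\mathcal{A})=e$ combined with Lemma~\ref{circrad} (used contrapositively) prevents $\mathcal{A}$ from being a proper generalized wreath product, since in any such decomposition the generator-containing basic set would lie outside the intermediate $\mathcal{A}$-subgroup $U$ and would therefore inherit a nontrivial radical. At this point I would invoke the known classification of $S$-rings over cyclic $p$-groups (see \cite{EP3}, and, for the specific groups considered here, \cite{MP3,Ry}): any such $S$-ring is either cyclotomic or a proper generalized wreath product, so here $\mathcal{A}=\cyc(K,A)$ for some $K\leq\aut(A)$.

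For (b), I would identify $\aut(A)$ with $(\mathbb{Z}/p^k\mathbb{Z})^*$, observe that $X=K\cdot a$ corresponds to $K\subseteq\mathbb{Z}/p^k\mathbb{Z}$, and note that $\rad(X)$ consists of the $j\in\mathbb{Z}/p^k\mathbb{Z}$ with $j+K=K$. For $p=3$ the group $(\mathbb{Z}/3^k\mathbb{Z})^*$ is cyclic of order $2\cdot 3^{k-1}$, and a short computation shows that the only subgroups satisfying the radical condition are $K=\{1\}$ (case~$2$) and $K=\langle -1\rangle$ (case~$3$). For $p=2$ and $k\geq 3$, $(\mathbb{Z}/2^k\mathbb{Z})^*\cong\langle -1\rangle\times\langle 5\rangle$ contains exactly three subgroups of order~$2$, namely $\langle -1\rangle$, $\langle 2^{k-1}-1\rangle$, and $\langle 2^{k-1}+1\rangle$; explicit computation yields $\rad(X)=e$ for the first two but $\rad(X)=\langle a_1\rangle$ for the third. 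The subgroup $\langle 2^{k-1}-1\rangle$ is generated by the automorphism determined by $\sigma(a)=a^{2^{k-1}-1}=a_1a^{-1}$, matching case~$4$, and the identity $(2^{k-1}-1)^2\equiv 1\pmod{2^k}$ confirms that such an order-$2$ element exists only when $p=2$. For every larger admissible $K$ I would exhibit an explicit nonzero translation preserving $K$ to rule it out.

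For (c), cases~$1$ and $2$ are immediate from the general remarks after~$(2)$. In cases~$3$ and $4$ every basic set has cardinality at most~$2$, so $\mathcal{A}$ is quasi-thin; since the cyclic group $A$ contains no subgroup isomorphic to $C_2\times C_2$, the exceptional hypothesis of Lemma~\ref{quasithin} is vacuously satisfied and $\mathcal{A}$ is separable with respect to the class of all $S$-rings.

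The hardest step will be (a): reducing to the cyclotomic case relies on a classification theorem for $S$-rings over cyclic $p$-groups that is not proved in the excerpt and must be imported. A secondary delicate point is case~$4$, where the formula $\sigma:x\mapsto a_1x^{-1}$ should be interpreted as determining the automorphism $\sigma(x)=x^{2^{k-1}-1}$ through its action on the generator $a$, and one must verify that the resulting element has order~$2$ in $\aut(A)$ precisely when $p=2$.
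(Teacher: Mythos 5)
Your argument is correct and its skeleton matches the paper's: reduce to the cyclotomic case via an imported structure theorem, pin down the admissible $K\leq\aut(A)$, and then conclude separability from quasi-thinness together with the absence of $C_2\times C_2$ subgroups in a cyclic group (step (c) is verbatim the paper's argument). The differences are in the middle. For the reduction, the paper invokes \cite[Theorems 4.1 and 4.2]{EP2} (a trivial-radical circulant $S$-ring is a tensor product of trivial-radical cyclotomic $S$-rings and rank-$2$ $S$-rings, which collapses to a single factor over a $p$-group), whereas you combine Lemma~\ref{circrad} read contrapositively with the trichotomy ``rank $2$ / cyclotomic / proper generalized wreath product'' for cyclic $p$-groups; these are equivalent imports and both are external to the paper, so neither route is more self-contained than the other. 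For the determination of $K$, the paper simply cites \cite[Lemma 5.1]{EP}, while you redo the enumeration by hand inside $(\mathbb{Z}/p^k\mathbb{Z})^*$; your computation is right (any subgroup of order at least $3$ for $p=3$, resp.\ at least $4$ for $p=2$, contains the translation-like involution $1+p^{k-1}$, which forces $a_1$, resp.\ $A_1$, into the radical of the highest orbit), and this buys a self-contained proof of the case list at the cost of some case analysis you only sketch (``I would exhibit an explicit nonzero translation'') and of leaving the degenerate cases $p=2$, $k\leq 2$ implicit --- there $\aut(A)$ has at most one involution and the list collapses to cases $1$--$3$, so nothing goes wrong, but you should say so. None of these are genuine gaps.
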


\begin{proof}

From  \cite[Theorem 4.1]{EP2} and \cite[Theorem 4.2]{EP2} it follows that every $S$-ring with the trivial radical over a cyclic group is the tensor product of cyclotomic $S$-rings with the trivial radical and  $S$-rings of  rank~$2$. Since  $A$ is a $p$-group, we conclude that either $\rk(\mathcal{A})=2$ or $\mathcal{A}=\cyc(K,A)$ for some $K\leq \aut(A)$. In the former case it is obvious that $\mathcal{A}$ is separable. In the latter case  \cite[Lemma 5.1]{EP} implies that one of  Statements
 $2$-$4$ holds. In particular, $\mathcal{A}$ is quasi-thin.  The group $A$ is cyclic and hence it does not contain  $\mathcal{A}$-subgroups $H$ such that $H\cong C_2\times C_2$. Therefore $\mathcal{A}$ is separable by Lemma~\ref{quasithin}.
 \end{proof}

Denote the symmetric group of a set $V$ by $\sym(V)$. If $\Gamma\leq \sym(V)$ then denote the set of all orbits of the componentwise action of $\Gamma$ on $V^2$ by $\orb(\Gamma,V^2)$. Permutation groups $\Gamma,~\Gamma^{'}\leq \sym(V)$   are called  $2$-\emph{equivalent} if  $\orb(\Gamma,V^2)=\orb(\Gamma^{'},V^2)$. A permutation group  $\Gamma\leq \sym(V)$ is called $2$-\emph{isolated} if  it is the only group which is $2$-equivalent to $\Gamma$.

\begin{lemm}\label{circaut}
Let $\mathcal{A}$ be the proper $U/L$-wreath product over $A$ and $\rad(\mathcal{A}_U)=e$. Then $\aut(\mathcal{A}_U)^{U/L}=\aut(\mathcal{A}_{U/L})$.
\end{lemm}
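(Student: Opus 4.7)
The plan is a case analysis based on Lemma \ref{circ} applied to $\mathcal{A}_U$, which is legitimate because $U$ is a cyclic $p$-subgroup of $A$ and $\rad(\mathcal{A}_U)=e$. Four possibilities arise: (i)~$\rk(\mathcal{A}_U)=2$; (ii)~$\mathcal{A}_U=\mathbb{Z}U$; (iii)~$\mathcal{A}_U=\cyc(\langle\sigma\rangle,U)$ with $\sigma\colon x\mapsto x^{-1}$; and (iv)~$p=2$ and $\mathcal{A}_U=\cyc(\langle\sigma\rangle,U)$ where $\sigma\in\aut(U)$ is the involution customarily written as $x\mapsto u_* x^{-1}$, with $u_*$ the unique element of order~$2$ in $U$. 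Since $L$ is a nontrivial $\mathcal{A}$-subgroup contained in $U$, it is in particular an $\mathcal{A}_U$-subgroup, and in each case I will compute $\aut(\mathcal{A}_U)$ and $\aut(\mathcal{A}_{U/L})$ and verify that the natural restriction homomorphism between them is onto.

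Cases~(i) and~(ii) are immediate. In case~(i) the only nontrivial $\mathcal{A}_U$-subgroup of $U$ is $U$ itself, so $L=U$, $U/L$ is trivial, and the claim is vacuous. In case~(ii) every basic set of $\mathcal{A}_U$ is a singleton, so $\aut(\mathcal{A}_U)=U_{right}$; likewise $\mathcal{A}_{U/L}=\mathbb{Z}(U/L)$ and $\aut(\mathcal{A}_{U/L})=(U/L)_{right}$, and the natural projection $U_{right}\to(U/L)_{right}$ is onto.

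In cases~(iii) and~(iv) the plan is to prove $\aut(\mathcal{A}_U)=U_{right}\rtimes\langle\sigma\rangle$. The inclusion $\supseteq$ is immediate since $\sigma\in\aut(U)$ fixes every basic set setwise. For the reverse, pick $f\in\aut(\mathcal{A}_U)$ and, after composing with a right translation, assume $f(e)=e$; then $f$ preserves each basic set setwise, so $f(u)\in\{u,\sigma(u)\}$ for a generator $u$ of $U$. An inductive propagation through the relation $R(\{u,\sigma(u)\})$, using $\rad(\mathcal{A}_U)=e$ to guarantee that for every $k$ the basic set of $u^k$ is a pair distinct from its neighbors, forces $f$ to be either $\id$ or $\sigma$. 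For the quotient observe that $L$ is a nontrivial subgroup of the cyclic $p$-group $U$ and hence contains $u_*$; so in case~(iv) the involution $\sigma$ descends to ordinary inversion $\bar\sigma$ on $U/L$, which means $\mathcal{A}_{U/L}$ always falls under case~(iii), and the parallel rigidity argument gives $\aut(\mathcal{A}_{U/L})=(U/L)_{right}\rtimes\langle\bar\sigma\rangle$. Surjectivity of the restriction then follows because right translations of $U$ descend to right translations of $U/L$ and $\sigma$ descends to $\bar\sigma$. The principal technical obstacle is the rigidity identification of $\aut(\mathcal{A}_U)$ in cases~(iii) and~(iv); triviality of the radical is essential for the inductive propagation to succeed.
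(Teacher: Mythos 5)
Your proposal is correct, but it takes a genuinely different route from the paper. The paper does not compute any automorphism group explicitly: it observes that $\aut(\mathcal{A}_{U/L})$ and $\aut(\mathcal{A}_U)^{U/L}$ are automatically $2$-equivalent (both have the basic relations of $\mathcal{C}(\mathcal{A}_{U/L})$ as their $2$-orbits), reduces the lemma to showing that $\aut(\mathcal{A}_{U/L})$ is $2$-isolated, and then runs the same case analysis on Lemma~\ref{circ} only to conclude that either $U=L$ or every basic set of $\mathcal{A}_{U/L}$ is $\{x\}$ or $\{x,x^{-1}\}$, whence the point stabilizer has a faithful regular orbit and $2$-isolation follows from the cited \cite[Lemma~8.2]{MP3}. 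That argument is shorter and is reused verbatim for the noncyclic groups $U$ in Lemma~\ref{aut}, where an explicit computation of $\aut(\mathcal{A}_U)$ would be harder. What your approach buys is self-containedness: you identify $\aut(\mathcal{A}_U)=U_{right}\rtimes\langle\sigma\rangle$ and $\aut(\mathcal{A}_{U/L})=(U/L)_{right}\rtimes\langle\bar\sigma\rangle$ outright and check surjectivity of the (always well-defined) restriction map on generators, with no external $2$-isolation machinery. Your reductions are sound — in particular the observations that $L=U$ in case (i), that $L$ contains the involution so $\sigma$ descends to inversion in case (iv), and that $\supseteq$ suffices since $\aut(\mathcal{A}_U)^{U/L}\leq\aut(\mathcal{A}_{U/L})$ holds generally. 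The one place that must be written out with care is the rigidity step in case (iv): there the generating basic set $\{u,a_1u^{-1}\}$ is \emph{not} inverse-closed, so $\cay(U,X)$ is a $2$-out-regular digraph rather than a cycle, and the propagation from a fixed edge has to invoke other basic sets (e.g.\ the singleton $\{a_1\}$ and the pairs $\{u^j,\sigma(u^j)\}$) to separate the two out-neighbors at each step; this does go through for $|U|\geq 8$, which is the only range where case (iv) is not degenerate, but it is the genuine technical content of your proof and should not be left at the level of a sketch.
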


\begin{proof}
To prove the lemma it is sufficient to prove that  $\aut(\mathcal{A}_{U/L})$ is $2$-isolated. Indeed, the orbits of the componentwise action of the groups  $\aut(\mathcal{A}_{U/L})$ and  $\aut(\mathcal{A}_U)^{U/L}$ on $(U/L)^2$  coincide with the basic relations of the Cayley scheme corresponding to  $\mathcal{A}_{U/L}$. This implies that  $\aut(\mathcal{A}_{U/L})$ and  $\aut(\mathcal{A}_U)^{U/L}$ are $2$-equivalent. So if $\aut(\mathcal{A}_{U/L})$ is $2$-isolated then $\aut(\mathcal{A}_U)^{U/L}=\aut(\mathcal{A}_{U/L})$.

Since $\rad(\mathcal{A}_U)=e$, one of the statements of Lemma~\ref{circ} holds for  $\mathcal{A}_U$. If $\rk(\mathcal{A}_U)=~2$ then $U=L$ and obviously $\aut(\mathcal{A}_{U/L})$ is $2$-isolated. If one of  Statements  $2$-$4$ of Lemma~\ref{circ} holds for $\mathcal{A}_U$ then  $\mathcal{A}_{U/L}=\mathbb{Z}(U/L)$ or every basic set of $\mathcal{A}_{U/L}$ is of the form $\{x,x^{-1}\},~x\in U/L$. Therefore the stabilizer of  $L$ in  $\aut(\mathcal{A}_{U/L})$ has a faithful regular orbit and hence  $\aut(\mathcal{A}_{U/L})$ is $2$-isolated by \cite[Lemma~8.2]{MP3}.
\end{proof}

Note that the above lemma does not hold  for cyclic  $p$-groups, where $p>3$.

In \cite{EP3} it was proved that every $S$-ring over a cyclic $p$-group, where $p$ is a prime, is separable with respect to $\mathcal{K}_C$. Further we prove that all $S$-rings over cyclic  $2$- and $3$-groups are separable with respect to $\mathcal{K}_A$.

\begin{lemm}\label{sepcirc}
Let $\mathcal{A}$ be an  $S$-ring over  $A$. Then $\mathcal{A}$ is separable with respect to  $\mathcal{K}_A$.
\end{lemm}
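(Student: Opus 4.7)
The plan is to prove Lemma~\ref{sepcirc} by induction on $k$, where $|A| = p^k$, using the structural/separability lemmas already established in this section. The base case $k=0$ is trivial since then $A$ is the trivial group and the only $S$-ring is $\mathbb{Z}A$, which is separable with respect to every class.

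For the inductive step, fix an $S$-ring $\mathcal{A}$ over $A$ and split on the value of $\rad(\mathcal{A})$. If $\rad(\mathcal{A}) = e$, then Lemma~\ref{circ} applies directly: $\mathcal{A}$ falls into one of the four listed cases, each of which is separable with respect to the class of all $S$-rings, hence in particular with respect to $\mathcal{K}_A$, and we are done.

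The interesting case is $\rad(\mathcal{A}) > e$. Here Lemma~\ref{circrad} furnishes an $\mathcal{A}$-section $U/L$ with $L \neq e$, $U \neq A$, such that $\mathcal{A}$ is the (proper) $U/L$-wreath product and $\rad(\mathcal{A}_U) = e$. Now I would verify the three hypotheses of Lemma~\ref{sepwr}:
\begin{enumerate}
\item Since $\rad(\mathcal{A}_U) = e$, Lemma~\ref{circ} applies to $\mathcal{A}_U$, so $\mathcal{A}_U$ is separable with respect to the class of all $S$-rings, hence with respect to $\mathcal{K}_A$.
\item The quotient $A/L$ is a cyclic $p$-group of order strictly smaller than $|A|$ (because $L > e$), so by the induction hypothesis $\mathcal{A}_{A/L}$ is separable with respect to $\mathcal{K}_A$.
\item The equality $\aut(\mathcal{A}_U)^{U/L} = \aut(\mathcal{A}_{U/L})$ is exactly the conclusion of Lemma~\ref{circaut}, which applies because $\rad(\mathcal{A}_U) = e$.
\end{enumerate}
Lemma~\ref{sepwr} then gives that $\mathcal{A}$ itself is separable with respect to $\mathcal{K}_A$, completing the induction.

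I do not anticipate a genuine obstacle: the preceding lemmas of \S6 are calibrated precisely for this argument. The only potential subtlety is making sure the wreath decomposition is \emph{proper}, so that both $\mathcal{A}_U$ and $\mathcal{A}_{A/L}$ are covered by the available tools (the former by the $\rad = e$ case, the latter by the induction hypothesis); this is guaranteed by Lemma~\ref{circrad}. The real work of the section is packaged inside Lemma~\ref{circ}, Lemma~\ref{circaut}, and the generalized-wreath machinery of Lemma~\ref{sepwr} --- once those are in hand, Lemma~\ref{sepcirc} is a short two-case induction.
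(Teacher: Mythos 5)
Your proof is correct and follows essentially the same route as the paper: induction on $k$, the radical-trivial case handled by Lemma~\ref{circ}, and the case $\rad(\mathcal{A})>e$ handled by combining Lemma~\ref{circrad}, Lemma~\ref{circaut}, and Lemma~\ref{sepwr}. The only cosmetic differences are that the paper starts the induction at $k=1$ and invokes the induction hypothesis for $\mathcal{A}_U$ where you invoke Lemma~\ref{circ} directly (both are valid since $\rad(\mathcal{A}_U)=e$).
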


\begin{proof}
We proceed by induction on $k$. If $k=1$ then the statement of the lemma holds because  $\rk(\mathcal{A})=2$ or $\mathcal{A}=\mathbb{Z}A$. Now let $k \geq 2$. If $\rad(\mathcal{A})=e$ then $\mathcal{A}$ is separble by Lemma~\ref{circ}. Suppose that $\rad(\mathcal{A}) > e$. Then from Lemma~\ref{circrad} it follows that $\mathcal{A}$ is the proper  $U/L$-wreath product for some $\mathcal{A}$-section $U/L$ such that $\rad(\mathcal{A}_U)=e$. By the induction hypothesis $S$-rings $\mathcal{A}_U$ and $\mathcal{A}_{A/L}$ are separable with respect to  $\mathcal{K}_A$. Lemma~\ref{circaut} yields that $\aut(\mathcal{A}_U)^{U/L}=\aut(\mathcal{A}_{U/L})$. Thus all conditions of Lemma~\ref{sepwr} hold for $\mathcal{A}$ and  hence  $\mathcal{A}$ is separable with respect to  $\mathcal{K}_A$.
\end{proof}

\section{$S$-rings over  $C_p\times C_{p^k}$}

Let $p\in\{2,3\}$ and $k\geq 1$. Put $D=A\times B$, where $A=\langle a\rangle,~|a|=p^k,~B=\langle b\rangle,~|b|=p$. Let $a_1=a^{p^{k-1}}$ and $a_2=a^{p^{k-2}}$. If $l\leq k$ then denote the subgroups $\{g\in A:|g|\leq p^l\}$ and  $\{g\in D:|g|\leq p^l\}$ of $D$ by $A_l$ and $D_l$ respectively. In these notations $A=A_k$ and $D=D_k$.

In the next lemma we describe sections of  $D$ such that $D$ is determined up to isomorphism  by these sections.

\begin{lemm}\label{group}
Let $q$ be a prime, $m\geq 3$, and $D^{'}$ be an abelian group of order  $q^{m+1}$. Suppose that the following conditions hold:

$1)$ $D^{'}$ contains at least two subgroups  of order  $q^{m-1}$ and one of them, say $A^{'}$, is cyclic$;$

$2)$ $A^{'}$ contains a subgroup $A_1^{'}$ of order $q$ such that $D^{'}/A_1^{'}$ is isomorphic to $C_q\times C_{q^{m-1}}.$

Then $D^{'}$ is isomorphic to  $C_q\times C_{q^{m}}$ or $C_{q^2}\times C_{q^{m-1}}$. Moreover, if $m \geq 4$ or $D^{'}$ contains a noncyclic subgroup  $W^{'}$ of order $q^2$ such that $|W^{'}\cap A^{'}|=q$ and $D^{'}/W^{'}$ is cyclic then $D^{'} \cong C_q\times C_{q^{m}}$.
\end{lemm}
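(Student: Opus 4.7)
The approach is to run through the possible isomorphism types of abelian $q$-groups $D'$ of order $q^{m+1}$ and use conditions~$1$ and~$2$ to exclude all but the two allowed types. Applying the structure theorem I would write $D'\cong C_{q^{a_1}}\times\cdots\times C_{q^{a_r}}$ with $a_1\geq\cdots\geq a_r\geq 1$ and $\sum a_i=m+1$. Condition~$1$ forces $a_1\geq m-1$ (because $D'$ has a cyclic subgroup of order $q^{m-1}$) and also forces $D'$ to be noncyclic (since a cyclic group has a unique subgroup of each divisor order, whereas condition~$1$ demands at least two of order $q^{m-1}$). Hence $a_1\in\{m-1,m\}$, leaving exactly three candidate types: $C_q\times C_{q^m}$, $C_{q^2}\times C_{q^{m-1}}$, and $C_q\times C_q\times C_{q^{m-1}}$.

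The main step is an exponent comparison using condition~$2$. Let $g$ be a generator of $A'$, so that $A_1'=\langle g^{q^{m-2}}\rangle$. The key observation is that $q^{m-2}$ kills every cyclic factor of order~$q$ (since $m\geq 3$) and also every factor of order~$q^2$ once $m\geq 4$. Writing $D'=\langle x\rangle\times H$ with $|x|=q^{a_1}$, this forces $A_1'\leq\langle x\rangle\times\{1\}$ in the types $C_q\times C_q\times C_{q^{m-1}}$ (for all $m\geq 3$) and $C_{q^{m-1}}\times C_{q^2}$ (for $m\geq 4$). The corresponding quotients $D'/A_1'$ are $C_{q^{m-2}}\times C_q\times C_q$ and $C_{q^{m-2}}\times C_{q^2}$, both of exponent $q^{m-2}$, whereas the target $C_q\times C_{q^{m-1}}$ has exponent $q^{m-1}$; so condition~$2$ fails in these situations. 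The one remaining case, $m=3$ with $D'\cong C_{q^2}\times C_{q^2}$, does satisfy condition~$2$ via the choice $A'=\langle x\rangle$, $A_1'=\langle x^q\rangle$, giving $D'/A_1'\cong C_q\times C_{q^2}$. This proves the first assertion.

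For the ``moreover'' part the case $m\geq 4$ is already covered by the stronger conclusion just obtained, so the only remaining case is $m=3$ with $D'\cong C_{q^2}\times C_{q^2}$. In this group the socle $\{g\in D':g^q=e\}$ is the unique noncyclic subgroup of order $q^2$, so any candidate $W'$ must coincide with it; but then $D'/W'\cong C_q\times C_q$ is noncyclic, contradicting the hypothesis on $W'$. Hence existence of such a $W'$ forces $D'\cong C_q\times C_{q^m}$. The principal obstacle I anticipate is the non-canonical embedding of $A'$ in $D'$ in the intermediate types --- a cyclic subgroup of order $q^{m-1}$ in $C_{q^{m-1}}\times C_{q^2}$ need not equal the ``first factor'' --- but the observation that $q^{m-2}$ automatically annihilates the small factor for $m\geq 4$ makes $A_1'$ canonical and keeps the exponent computation honest.
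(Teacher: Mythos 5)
Your proof is correct, and it follows the same overall skeleton as the paper's: classify the candidate types of $D^{'}$ (the same three survive conditions on order, exponent and noncyclicity), then use condition~$2$ to kill $C_q\times C_q\times C_{q^{m-1}}$ for all $m\geq 3$ and $C_{q^2}\times C_{q^{m-1}}$ for $m\geq 4$. The tactical details differ, though. The paper writes $D^{'}=H^{'}\times A^{'}$, i.e.\ it implicitly uses that the given cyclic subgroup $A^{'}$, being of maximal order, is a direct summand, and then rules out the bad types by exhibiting a subgroup of $D^{'}/A_1^{'}$ (namely $C_q\times C_q\times C_q$, resp.\ $C_{q^2}\times C_{q^2}$) that $C_q\times C_{q^{m-1}}$ cannot contain; you instead fix an arbitrary decomposition $D^{'}=\langle x\rangle\times H$ and observe that $q^{m-2}$ annihilates $H$, which pins $A_1^{'}$ inside $\langle x\rangle$ and lets you finish by a bare exponent comparison --- this neatly sidesteps the question of whether $A^{'}$ itself is a direct factor. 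For the ``moreover'' clause the paper runs a projection argument on $W^{'}$ that works uniformly for all $m\geq 3$, whereas you first dispose of $m\geq 4$ by the stronger conclusion already obtained and then, for $m=3$ and $D^{'}\cong C_{q^2}\times C_{q^2}$, identify $W^{'}$ with the socle and note the quotient is $C_q\times C_q$; your version is shorter and does not even need the hypothesis $|W^{'}\cap A^{'}|=q$, while the paper's is more uniform. Both are complete.
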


\begin{proof}
Since  $D^{'}$ is abelian, it is the direct product of cyclic groups. Moreover, $D^{'}$ is isomorphic to one of the following groups
$$C_{q^{m+1}},C_q\times C_{q^{m}},C_{q^2}\times C_{q^{m-1}},C_q\times C_q \times C_{q^{m-1}}$$
because $A^{'}$ is the cyclic group of order $q^{m-1}$. Note that $D^{'}$ is noncyclic because  $D^{'}$ contains at least two subgroups of order  $q^{m-1}$. Suppose that $D^{'}\cong C_q\times C_q \times C_{q^{m-1}}$. Then $D^{'}=H^{'}\times A^{'}$, where  $H^{'}\cong C_q\times C_q$. If $A_1^{'}\leq A^{'}$ has order $q$ then  $D^{'}/A_1^{'}$ contains a subgroup isomorphic to $C_{q}\times C_{q}\times C_{q}$ because $m\geq 3$. We obtain a contradiction with $D^{'}/A_1^{'} \cong C_q\times C_{q^{m-1}}$.  Therefore $D^{'}\cong C_q\times C_{q^{m}}$ or $D^{'} \cong C_{q^2}\times C_{q^{m-1}}$.

If  $X\subseteq G \times H$ then denote the projections of  $X$ on $G$ and $H$ by $\pr_{G}(X)$ and
$\pr_{H}(X)$ respectively. Prove the second part of the lemma. Suppose that  $D^{'}=H^{'}\times A^{'}$, where $H^{'} \cong C_{q^2}$. If $m\geq 4$ then $D^{'}/A_1^{'}$ contains a subgroup isomorphic to  $C_{q^2}\times C_{q^2}$ and we obtain a contradiction with  $D^{'}/A_1^{'} \cong C_q\times C_{q^{m-1}}$. If  there is a noncyclic subgroup $W^{'}$ of order $q^2$ in $D^{'}$  such that $|W^{'}\cap A^{'}|=q$ and $D^{'}/W^{'}$ is cyclic then  $|\pr_{H^{'}}(W^{'})|=q$ because $|W^{'}\cap A^{'}|=q$. Since $W^{'}$ is noncyclic, we have $|\pr_{A^{'}}(W^{'})|=q$. So $W^{'}=\pr_{H^{'}}(W^{'})\times \pr_{A^{'}}(W^{'})\cong C_q\times C_q$. This implies that  $D^{'}/W^{'}$  is noncyclic, a contradiction with the assumption of the lemma. Thus  $D^{'}\cong C_{q}\times C_{q^{m}}$.
\end{proof}

Let $\mathcal{A}$ be an $S$-ring over $D$.  A basic set $X\in \mathcal{S}(\mathcal{A})$ is called   \emph{highest} if it contains an element of order $p^k$. By the \emph{radical} of $\mathcal{A}$ we mean the subgroup $\rad(\mathcal{A})$ generated by the subgroups $\rad(X)$, where $X$ runs over all highest basic sets of $\mathcal{A}$. A subset of  $D$ is called \emph{regular} if it consists of elements of the same order.

The description of all  $S$-rings over $D$ was obtained  for $p=2$ in \cite{MP3} and for $p=3$ in \cite{Ry}.

\begin{lemm}\label{element}
If $\mathcal{A}$ is an $S$-ring over $D$ and $k=1$ then one of the following statements holds:

$1)$ $\rk(\mathcal{A})=2;$

$2)$ $\mathcal{A}$ is the tensor product of two  $S$-rings over cyclic groups of order $p;$

$3)$ $\mathcal{A}$ is the wreath product of two  $S$-rings over cylic groups of order $p;$

$4)$ $p=3$ and $\mathcal{A}=\cyc(K,D)$, where $K=\{e,~\delta\},~\delta:x\rightarrow x^{-1};$

$5)$  $p=3$ and $\mathcal{A}\cong_{\cay} \cyc(K,D)$, where $K=\langle \sigma \rangle$ and $\sigma:(a_1,b)\rightarrow(b,a_1^2).$

\end{lemm}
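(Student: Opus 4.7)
My plan is to classify all $S$-rings over the elementary abelian group $D\cong C_p\times C_p$ (with $p\in\{2,3\}$) by case analysis on the lattice of $\mathcal{A}$-subgroups. Since $D$ has only $p+1$ proper nontrivial subgroups, each of order $p$, the case work is finite.

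First I would treat the case in which $\mathcal{A}$ has no proper nontrivial $\mathcal{A}$-subgroup. If $\rk(\mathcal{A})=2$ we are in case~(1). Otherwise every basic set $X\neq\{e\}$ satisfies $\langle X\rangle=D$ by~(3), and by Lemma~\ref{burn} $\mathcal{A}$ is stable under the rational multipliers $\sigma_m$ for $\gcd(m,p)=1$. I would then argue that $\mathcal{A}$ must be cyclotomic, i.e.\ $\mathcal{A}=\cyc(K,D)$ for some $K\leq\aut(D)\cong\GL_2(\mathbb{F}_p)$, by realizing the basic sets as $K$-orbits. For $p=2$ a direct inspection of $\GL_2(\mathbb{F}_2)\cong S_3$ shows that every $K$ producing rank $>2$ yields a basic set contained in a proper subgroup (contradicting our assumption), so only case~(1) occurs. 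For $p=3$ the only subgroup of $\GL_2(\mathbb{F}_3)$ whose orbits on $D\setminus\{e\}$ avoid any proper subgroup-union is, up to Cayley isomorphism, the cyclic group of order $4$ generated by $\sigma$, yielding case~(5).

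Next suppose $\mathcal{A}$ admits a proper nontrivial $\mathcal{A}$-subgroup $H$ of order $p$. If every basic set $X\not\subseteq H$ satisfies $H\leq\rad(X)$, then by definition $\mathcal{A}=\mathcal{A}_H\wr\mathcal{A}_{D/H}$, giving case~(3). Otherwise some basic set $X$ outside $H$ has $\rad(X)=\{e\}$, so $X$ meets each nontrivial coset of $H$ in exactly one element. Applying Lemma~\ref{sch} together with the Schur principle I would extract a second order-$p$ $\mathcal{A}$-subgroup $K$ transversal to $H$, and then show that for $p=2$ the resulting partition must split as products $X_H\times X_K$, so that $\mathcal{A}=\mathcal{A}_H\otimes\mathcal{A}_K$, giving case~(2). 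For $p=3$ a more delicate analysis — separating the subcase in which all basic sets factor through some pair $(H,K)$ from the subcase in which they do not — yields either the tensor product (case~2) or the inversion-invariant cyclotomic $S$-ring of case~(4), whose basic sets $\{x,x^{-1}\}$ do \emph{not} factor even though all four order-$3$ subgroups of $D$ appear as $\mathcal{A}$-subgroups.

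The main obstacle is the $p=3$ classification, and specifically the separation of case~(4) from genuine tensor products: both configurations feature multiple order-$3$ $\mathcal{A}$-subgroups, but in case~(4) a basic set such as $\{a_1b,a_1^{-1}b^{-1}\}$ is not of product form for any transversal pair. Making this separation rigorous requires explicit structure-constant bookkeeping and an enumeration of admissible basic-set configurations. This is precisely the content of \cite{MP3} for $p=2$ and \cite{Ry} for $p=3$, whose classifications I would adopt to complete the proof.
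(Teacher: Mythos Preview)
The paper's own proof is a single sentence deferring entirely to a computer enumeration with the GAP package COCO2P. Your approach is therefore genuinely different---a theoretical case analysis on the lattice of $\mathcal{A}$-subgroups rather than exhaustive search---and in principle more informative.

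That said, your sketch has concrete gaps. First, your appeal to Lemma~\ref{sch} is vacuous here: for $D=C_p\times C_p$ the subgroup $H=\{g\in D:g^p=e\}$ is all of $D$, so $X\cap Hx=X$ for every $x$, and $X^{[p]}$ is either $\{e\}$ or $\varnothing$ depending only on whether $|X|\not\equiv 0\pmod p$. This yields no second $\mathcal{A}$-subgroup. Second, the claim that an $S$-ring with no proper nontrivial $\mathcal{A}$-subgroup ``must be cyclotomic'' is not justified; this is not a general theorem, and establishing it for $C_3\times C_3$ amounts to the very enumeration you are trying to bypass. Third, your closing deferral to \cite{MP3} and \cite{Ry} does not close the argument: as invoked in Lemma~\ref{Sring}, those classifications are stated for $k\geq 2$, which is precisely why the present paper handles $k=1$ separately by computer. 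A direct by-hand enumeration of admissible partitions of a $4$- or $9$-element group is entirely feasible and would make your argument self-contained, but you have not carried it out.
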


\begin{proof}
Follows from the computer calculations that made by using the package COCO2P  \cite{GAP}.
\end{proof}

\begin{lemm}\label{Sring}
Let $\mathcal{A}$ be an $S$-ring over $D$ and $k\geq 2$. Then one of the following statements holds:

$1)$ $\rad(\mathcal{A})=e$ and there exist $\mathcal{A}$-subgroups $L,H\leq D$ such that  $\mathcal{A}=\mathcal{A}_H\otimes\mathcal{A}_L$, $\rk(\mathcal{A}_H)=2$, and $|L|\leq p \leq |H|;$

$2)$  $\rad(\mathcal{A})>e$ and there exist an $\mathcal{A}$-section $U/L$ such that $\mathcal{A}$ is the proper $U/L$-wreath product. Moreover, $\mathcal{A}_{U/L}=\mathbb{Z}(U/L)$, or $|U/L|\leq 4$, or $\rad(\mathcal{A}_U)=e$ and $|L|=p;$

$3)$ $\rad(\mathcal{A})=e$ and  $\mathcal{A}\cong_{\cay}\cyc(K,D)$, where $K\leq \operatorname{Aut}(D)$ is one of the groups listed in Table~$1$ for $p=2$ and in Table~$2$ for $p=3.$

\end{lemm}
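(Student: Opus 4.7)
My plan is to follow the standard Muzychuk--Ponomarenko framework for classifying $S$-rings over $p$-groups: split cases according to whether $\rad(\mathcal{A})$ is trivial or not, and within each case use Schur's classical lemmas (Lemma~\ref{burn}, Lemma~\ref{sch}) together with the structural facts about wreath and tensor decompositions. The first observation is that $\rad(\mathcal{A})$ is automatically an $\mathcal{A}$-subgroup, since it is generated by the $\mathcal{A}$-subgroups $\rad(X)$ as $X$ ranges over highest basic sets, so the dichotomy ``$\rad(\mathcal{A}) = e$ versus $\rad(\mathcal{A}) > e$'' is intrinsic.

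If $\rad(\mathcal{A}) > e$, I would imitate the proof of Lemma~\ref{circrad}. Let $\mathcal{X}$ be the union of all basic sets $X \in \mathcal{S}(\mathcal{A})$ with $\rad(X) = e$, set $U = \langle \mathcal{X}\rangle$, and choose $L$ to be a minimal nontrivial $\mathcal{A}$-subgroup contained in the radical of every basic set lying outside $U$. Every outside basic set then satisfies $L \leq \rad(X)$, yielding a proper $U/L$-wreath decomposition. The trichotomy in statement~(2) comes from refining the choice of $L$: whenever a subgroup of order $p$ is available one takes $L$ of order $p$, forcing $\rad(\mathcal{A}_U) = e$ by maximality of $U$ (third branch); if only larger choices exist, the available subgroup lattice of $D$ forces either $|U/L| \leq 4$ or $\mathcal{A}_{U/L}$ to degenerate to the full group ring.

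If $\rad(\mathcal{A}) = e$, every highest basic set has trivial radical. Applying Lemma~\ref{sch} with $p$ to extract the $\mathcal{A}$-subgroups $X^{[p]}$ coming from $p$-th power images of basic sets, and using rational conjugacy (Lemma~\ref{burn}) to relate highest basic sets, I would search for a direct product decomposition $D = H \times L$ with $\mathcal{A}$-subgroups $H, L$ such that no basic set of $\mathcal{A}$ genuinely mixes the factors; this gives $\mathcal{A} = \mathcal{A}_H \otimes \mathcal{A}_L$ as in case~(1), and shrinking $H$ allows one to arrange $\rk(\mathcal{A}_H) = 2$ and $|L| \leq p \leq |H|$. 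If no such decomposition exists, I would argue that the stabilizer of the identity in $\aut(\mathcal{A})$ must act on $D$ through group automorphisms (because the basic sets both respect powers and fail to split over any nontrivial direct factor), so $\mathcal{A} = \cyc(K, D)$ for some $K \leq \aut(D)$, putting us in case~(3).

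The main obstacle will be the explicit enumeration of Tables~1 and~2. The subgroup lattice of $\aut(D) = \aut(C_p \times C_{p^k})$ is intricate (an extension involving $\GL_2(\mathbb{F}_p)$-type pieces and abelian diagonal kernels), and one must discard the many $K$ that produce $S$-rings already visible as tensor or wreath products. For $p = 2$ this enumeration was carried out in \cite{MP3} and for $p = 3$ in \cite{Ry}; in both, the base case $k = 2$ is verified by computer using COCO2P~\cite{GAP}, and the inductive step on $k$ reduces via Lemma~\ref{simgwr} to the previously classified sections. I would accept this classification as the final ingredient rather than redoing it from scratch.
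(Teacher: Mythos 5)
The paper does not prove this lemma at all: its entire proof is the citation of \cite[Theorems 6.1, 7.1, 9.1]{MP3} for $p=2$ and \cite[Theorems 4.1, 5.1, 6.1]{Ry} for $p=3$, which establish the full trichotomy $(1)$--$(3)$, not merely the lists of cyclotomic groups in Tables~1 and~2. Your proposal ultimately leans on the same two references, so in that sense you land in the right place; but you misallocate what is cited versus what you claim to derive yourself, and the parts you claim to derive contain genuine gaps.

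Concretely: in the case $\rad(\mathcal{A})>e$ you transplant the argument of Lemma~\ref{circrad}, but that argument depends essentially on $G$ being a cyclic $p$-group, where there is a \emph{unique} minimal $\mathcal{A}$-subgroup $L$, automatically contained in the (nontrivial) radical of every basic set outside $U$. In $D=C_p\times C_{p^k}$ there are $p+1$ subgroups of order $p$, and two basic sets outside $U$ may a priori have radicals meeting trivially; the existence of a single $L$ with $L\leq\rad(X)$ for all outside $X$, and the precise trichotomy ``$\mathcal{A}_{U/L}=\mathbb{Z}(U/L)$, or $|U/L|\leq 4$, or $\rad(\mathcal{A}_U)=e$ and $|L|=p$'', are exactly the content of the cited theorems and do not follow from your sketch. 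Similarly, in the case $\rad(\mathcal{A})=e$, the assertion that an $S$-ring admitting no tensor decomposition must be cyclotomic (``the stabilizer of the identity in $\aut(\mathcal{A})$ must act through group automorphisms'') is the hardest part of the classification and cannot be extracted from Lemma~\ref{burn} and Lemma~\ref{sch} alone; it is again what \cite{MP3} and \cite{Ry} prove. The correct posture here is the paper's: cite those theorems for the entire statement, rather than presenting heuristic reconstructions of their case analysis as if they were proofs.
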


\begin{center}

{\small
\begin{tabular}{|l|l|l|l|}
  \hline
  % after \\: \hline or \cline{col1-col2} \cline{col3-col4} ...
  group & generators & order & $k$    \\
  \hline
  $K_0$ & $(a,b)\rightarrow (a,b)$ & $1$ & $k\geq 2$\\ \hline
  $K_1$ & $(a,b)\rightarrow (a^{-1},b)$  & $2$ & $k\geq 3$ \\  \hline
  $K_2$ & $(a,b)\rightarrow (a_1a^{-1},b)$  & $2$ & $k\geq 3$ \\  \hline
	$K_3$ & $(a,b)\rightarrow (a^{-1},ba_1)$  & $2$ & $k\geq 3$\\  \hline
	$K_4$ & $(a,b)\rightarrow (a_1a^{-1},ba_1)$  & $2$  & $k\geq 3$\\  \hline
	$K_5$ & $(a,b)\rightarrow (ba_2a,ba_1),~(a,b)\rightarrow (a^{-1},b)$ & $4$  & $k\geq 4$\\ \hline
  $K_6$ & $(a,b)\rightarrow (ba_2a,ba_1),~(a,b)\rightarrow (a_1a^{-1},b)$ & $4$  & $k\geq 4$\\ \hline
	$K_7$ & $(a,b)\rightarrow (ba^{-1},b)$  & $2$  & $k\geq 4$\\  \hline
	$K_8$ & $(a,b)\rightarrow (ba_1a^{-1},b)$  & $2$  & $k\geq 4$\\  \hline
	$K_9$ & $(a,b)\rightarrow (ba_2a,ba_1)$  & $2$  & $k\geq 3$\\  \hline
	$K_{10}$ & $(a,b)\rightarrow (ba_2a^{-1},ba_1)$  & $2$  & $k\geq 4$\\  \hline

\end{tabular}
}

Table 1.

\end{center}

	\begin{center}

{\small
\begin{tabular}{|l|l|l|l|}
  \hline
  % after \\: \hline or \cline{col1-col2} \cline{col3-col4} ...
   group & generators & order & $k$   \\
  \hline
  $K_0$ & $(a,b)\rightarrow (a,b)$ & $1$  & $k\geq 2$\\ \hline
  $K_1$ & $(a,b)\rightarrow (a,b^2)$  & $2$  & $k\geq 2$\\  \hline
  $K_2$ & $(a,b)\rightarrow (a^{-1},b)$  & $2$ & $k\geq 2$\\  \hline
  $K_3$ & $(a,b)\rightarrow (a^{-1},b),~(a,b)\rightarrow (a,b^2)$ & $4$ & $k\geq 2$\\ \hline
  $K_4$ & $(a,b)\rightarrow (a^{-1},b^2)$ & $2$  & $k\geq 2$\\ \hline
  $K_5$ & $(a,b)\rightarrow (ba^{-1},b)$  & $2$  & $k\geq 2$\\  \hline
  $K_6$ & $(a,b)\rightarrow (ba,ba_1)$  & $3$  & $k\geq 3$\\  \hline
  $K_7$ & $(a,b)\rightarrow (ba,ba_1),~(a,b)\rightarrow (a,b^2a_1)$  & $6$  & $k\geq 3$\\  \hline
	$K_8$ & $(a,b)\rightarrow (ba,ba_1^2),~(a,b)\rightarrow (a^{-1},ba_1)$  & $6$ & $k\geq 3$\\  \hline
  $K_9$ & $(a,b)\rightarrow (ba,ba_1^2),~(a,b)\rightarrow (a^{-1},b^2)$  & $6$  & $k\geq 3$\\ \hline
	
\end{tabular}
}

Table 2.

\end{center}

\begin{proof}
The  lemma summarizes the statements of \cite[Theorem 6.1, Theorem 7.1, Theorem 9.1]{MP3} in case  $p=2$  and the statements of \cite[Theorem 4.1, Theorem 5.1, Theorem 6.1]{Ry} in case $p=3$.
\end{proof}

\begin{lemm}\label{aut}
Let  $\mathcal{A}$ be an $S$-ring over $D$ and Statement~$2$ of Lemma~$\ref{Sring}$ holds for $\mathcal{A}$. Then $\aut(\mathcal{A}_U)^{U/L}=\aut(\mathcal{A}_{U/L})$.
\end{lemm}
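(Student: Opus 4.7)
The proof follows the blueprint of Lemma~\ref{circaut}. Each of the groups $\aut(\mathcal{A}_U)^{U/L}$ and $\aut(\mathcal{A}_{U/L})$ has its orbits on $(U/L)^2$ equal to the basic relations of the Cayley scheme of $\mathcal{A}_{U/L}$, so the two groups are $2$-equivalent. Hence it suffices to prove that $\aut(\mathcal{A}_{U/L})$ is $2$-isolated, since equality of the two groups is then forced. I split the argument along the three possibilities permitted by Statement~$2$ of Lemma~\ref{Sring}.

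\emph{Case 1: $\mathcal{A}_{U/L}=\mathbb{Z}(U/L)$.} Every basic set of $\mathcal{A}_{U/L}$ is a singleton, so $\aut(\mathcal{A}_{U/L})=(U/L)_{right}$. This is a regular, $2$-closed permutation group; any proper subgroup is intransitive and so has strictly more orbits on $(U/L)^2$, so no other permutation group is $2$-equivalent to it.

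\emph{Case 2: $|U/L|\leq 4$.} Since $U/L$ is an abelian $p$-group with $p\in\{2,3\}$, it is one of $1$, $C_p$, $C_4$ or $C_2\times C_2$. For $|U/L|\leq 3$ there are only the group ring and the rank-$2$ $S$-ring, each with an obviously $2$-isolated automorphism group. For $|U/L|=4$ a direct enumeration of $S$-rings (there are only a few, and the analogous analysis has been carried out inside \cite{MP3}) shows that in every case the stabilizer of the identity in $\aut(\mathcal{A}_{U/L})$ has a faithful regular orbit, so $2$-isolation follows from \cite[Lemma~8.2]{MP3}.

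\emph{Case 3: $\rad(\mathcal{A}_U)=e$ and $|L|=p$.} Because $U$ is an $\mathcal{A}$-subgroup of $D$, it is either cyclic or isomorphic to $C_p\times C_{p^j}$ for some $j\leq k$. If $U$ is cyclic, Lemma~\ref{circ} classifies $\mathcal{A}_U$ into four types; in each of them the quotient $\mathcal{A}_{U/L}$ is either $\mathbb{Z}(U/L)$ or has every nontrivial basic set of the form $\{x,x^{-1}\}$, so the stabilizer of the identity in $\aut(\mathcal{A}_{U/L})$ has a faithful regular orbit, and \cite[Lemma~8.2]{MP3} gives $2$-isolation. If $U\cong C_p\times C_{p^j}$ then Lemma~\ref{Sring} applied to $\mathcal{A}_U$ forces Statement~$1$ or Statement~$3$ to hold because $\rad(\mathcal{A}_U)=e$. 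In the tensor-product case the question reduces to the cyclic factors already handled. In the cyclotomic case one runs through the entries of Tables~$1$ and~$2$, and using the explicit generators verifies that after factoring out by $L$ the stabilizer of the identity in $\aut(\mathcal{A}_{U/L})$ still admits a faithful regular orbit, so \cite[Lemma~8.2]{MP3} applies once more.

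The principal obstacle is the last sub-case when $U$ is noncyclic and $\mathcal{A}_U$ is one of the cyclotomic $S$-rings of Tables~$1$ and~$2$: one has to check, table entry by table entry, that the image in $\sym(U/L)$ of the defining subgroup of $\aut(D)$ retains a faithful regular orbit of the point-stabilizer, which is the most computationally delicate step of the argument.
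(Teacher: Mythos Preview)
Your reduction to $2$-isolation is the same opening move as the paper's, but it cannot carry the whole proof, and the gap shows up precisely in the tensor-product sub-case of your Case~3. When $U$ is noncyclic with $\rad(\mathcal{A}_U)=e$ and Statement~1 of Lemma~\ref{Sring} applies to $\mathcal{A}_U$, one has $\mathcal{A}_U=\mathcal{A}_H\otimes\mathcal{A}_L$ with $\rk(\mathcal{A}_H)=2$ and (since $|L|=p$ is the only $\mathcal{A}_U$-subgroup of that order) the quotient $\mathcal{A}_{U/L}$ is the rank-$2$ $S$-ring over $U/L\cong H$. Hence $\aut(\mathcal{A}_{U/L})=\sym(U/L)$. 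But $|U/L|=|H|\geq p^{2}\geq 4$, and $\sym(n)$ is \emph{not} $2$-isolated for $n\geq 4$ (already $A_n$ is $2$-transitive). So your sentence ``the question reduces to the cyclic factors already handled'' does not close the case: there is no $2$-isolation to appeal to. The paper anticipates this and argues differently here, computing directly that
\[
\aut(\mathcal{A}_U)^{U/L}=\bigl(\sym(H)\times\aut(\mathcal{A}_L)\bigr)^{U/L}=\sym(U/L)=\aut(\mathcal{A}_{U/L}),
\]
so that equality is obtained without $2$-isolation. The same objection hits your Case~2 when $|U/L|=4$ and $\mathcal{A}_{U/L}$ has rank~$2$: the point-stabilizer in $\sym(4)$ is $\sym(3)$ acting on three points, which has no faithful regular orbit, so \cite[Lemma~8.2]{MP3} does not apply.

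A second, less serious divergence concerns the regular (cyclotomic) sub-case of your Case~3. The paper does not run through Tables~1 and~2 checking faithful regular orbits; it invokes \cite[Theorem~8.1]{MP3} for $p=2$ and \cite[Corollary~5.2]{Ry} for $p=3$, which state outright that $\aut(\mathcal{A}_{U/L})$ is $2$-isolated whenever $\mathcal{A}_U$ is a regular $S$-ring with trivial radical over $D_l$. Your table-by-table plan might be made to work, but it is more laborious and, as you yourself note, the verification that the point-stabilizer keeps a faithful regular orbit after passing to $U/L$ is not automatic for the entries with $|K_i|\geq 3$.
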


\begin{proof}

To prove the lemma we show that  $\aut(\mathcal{A}_{U/L})$ is $2$-isolated or $\aut(\mathcal{A}_{U/L})=\sym(U/L)=\aut(\mathcal{A}_U)^{U/L}$. In the former case   $\aut(\mathcal{A}_U)^{U/L}=\aut(\mathcal{A}_{U/L})$ because $\aut(\mathcal{A}_{U/L})$ and  $\aut(\mathcal{A}_U)^{U/L}$ are $2$-equivalent (see the proof of Lemma~\ref{circaut}).

If $\mathcal{A}_{U/L}=\mathbb{Z}(U/L)$ or $|U/L|\leq 4$ then it is obvious that $\aut(\mathcal{A}_{U/L})$ is $2$-isolated.  Further we  assume that   $\rad(\mathcal{A}_U)=e$ and $|L|=p$. Suppose that $U$ is cyclic. Then $L=A_1$ is the unique $\mathcal{A}$-subgroup of order  $p$ and one of the statements of Lemma~\ref{circ} holds for  $\mathcal{A}_U$. If $\rk(\mathcal{A}_U)=2$ then  $U=L$ and hence  $\aut(\mathcal{A}_{U/L})$ is $2$-isolated. If one of  Statements $2$-$4$ of Lemma~\ref{circ} holds  for  $\mathcal{A}_U$ then $\mathcal{A}_{U/L}=\mathbb{Z}(U/L)$ or every basic set of   $\mathcal{A}_{U/L}$ is of the form $\{x,x^{-1}\},~x\in U/L$. So the stabilizer of  $L$ in  $\aut(\mathcal{A}_{U/L})$ has a faithful regular orbit and  $\aut(\mathcal{A}_{U/L})$ is $2$-isolated by \cite[Lemma~8.2]{MP3}.

Suppose now that   $U$ is noncyclic. Then $U\cong D_l$ for some $l\leq k$. If $\mathcal{A}_U$ is regular then   $\aut(\mathcal{A}_{U/L})$ is $2$-isolated by \cite[Theorem 8.1]{MP3} for $p=2$ and by \cite[Corollary 5.2]{Ry} for $p=3$. If $\mathcal{A}_U$ is nonregular then Lemma~\ref{Sring} implies that $\mathcal{A}_U=\mathcal{A}_H\otimes\mathcal{A}_L$, where $\rk(\mathcal{A}_H)=2$. Note that $\rk(\mathcal{A}_L)=2$ or $\mathcal{A}_L=\mathbb{Z}L$ since $|L|=p$ and $p\in\{2,3\}$. If $\rk(\mathcal{A}_L)=2$ then 
$$\sym(U/L)\geq \aut(\mathcal{A}_{U/L}) \geq \aut(\mathcal{A}_U)^{U/L}=(\sym(H)\times \sym(L))^{U/L}=\sym(U/L);$$
if $\mathcal{A}_L=\mathbb{Z}L$ then
$$\sym(U/L)\geq \aut(\mathcal{A}_{U/L}) \geq \aut(\mathcal{A}_U)^{U/L}=(\sym(H)\times L_{right})^{U/L}=\sym(U/L).$$
Thus in both  cases   $\aut(\mathcal{A}_{U/L})=\sym(U/L)=\aut(\mathcal{A}_U)^{U/L}$.
\end{proof}

\section{Proof of Theorem~$1$}

All notations from the previous section are valid throughout this section.

Let $\mathcal{A}$ be an arbitrary  $S$-ring over $D$. Let us prove that $\mathcal{A}$ is separable with respect to  $\mathcal{K}_A$.  From now on throught this section we write for short ``separable''   instead ``separable with respect to $\mathcal{K}_A$''. Let $\mathcal{A}^{'}$ be an $S$-ring over an abelian group  $D^{'}$ and $\varphi:\mathcal{A}\rightarrow \mathcal{A}^{'}$ be an algebraic isomorphism. We proceed by induction on  $k$. Let $k=1$. Then one of the statements of Lemma~\ref{element} holds for $\mathcal{A}$. If $\rk(\mathcal{A})=2$ then, obviously, $\mathcal{A}$ is separable. If $\mathcal{A}$ is the tensor product or the wreath product of two $S$-rings over cyclic groups of order  $p$ then $\mathcal{A}$ is separable by Lemma~\ref{schurtens}. If Statement~$4$ of Lemma~\ref{element} holds for  $\mathcal{A}$ then $\mathcal{A}$ satisfies the conditions of Lemma~\ref{quasithin} and hence $\mathcal{A}$ is separable. 

Now suppose that Statement~$5$ of Lemma~\ref{element} holds for $\mathcal{A}$. Then $|D^{'}|=|D|=9$ and $\rk(\mathcal{A}^{'})=\rk(\mathcal{A})=3$. From $(3)$ it follows that $\rad(\mathcal{A}^{'})$ is trivial since $\rad(\mathcal{A})=e$. If $D^{'}$  is cyclic then   Lemma~\ref{circ} yields that  $\rk(\mathcal{A}^{'})=2$ or $\mathcal{A}^{'}$ and  $\mathcal{A}$ are quasi-thin that is not true. So $D^{'}$  is noncyclic and hence $D^{'}\cong D$. Further we assume that  $D^{'}=D$. From Lemma~\ref{element} it follows that  $\mathcal{A}$ is the unique up to Cayley isomorphism  $S$-ring of rank~$3$ over $D$ with basic sets of cardinalities~$1,4,4$. Therefore $\mathcal{A}^{'}\cong_{\cay}\mathcal{A}$. Let $X=\{x,x^{-1},y,y^{-1}\}$  be a nontrivial basic set of $\mathcal{A}$ and $X^{\varphi}=\{x^{'},(x^{'})^{-1},y^{'},(y^{'})^{-1}\}$. Then the Cayley isomorphism 
 $$\sigma:(x,y)\rightarrow (x^{'},y^{'})\in \aut(D)$$
 induces $\varphi$.

Now let $k\geq 2$. Then one of  Statements  $1$-$3$ of Lemma~\ref{Sring} holds for  $\mathcal{A}$. Every $S$-ring over the group of order~$p$, where $p\in\{2,3\}$, is separable. So if  $\mathcal{A}=\mathcal{A}_H\otimes\mathcal{A}_L$, where $\rk(\mathcal{A}_H)=2$ and $|L|\leq p$, then $\mathcal{A}$ is of rank~$2$ and hence separable or $\mathcal{A}$ is separable by Lemma~\ref{schurtens}.

Suppose that Statement~$2$ of Lemma~\ref{Sring} holds for $\mathcal{A}$.  From Lemma~\ref{aut} it follows that  $\aut(\mathcal{A}_U)^{U/L}=\aut(\mathcal{A}_{U/L})$. So by Lemma~\ref{sepwr} it is sufficient to prove a separability of   $\mathcal{A}_U$ and $\mathcal{A}_{D/L}$. If $U=D_l$ for some $l<k$ then $\mathcal{A}_U$ is separable by the induction hypothesis. If $U$ is cyclic then  $\mathcal{A}_U$ is separable by Lemma~\ref{sepcirc}. Similarly, $\mathcal{A}_{D/L}$ is separable by the induction hypothesis whenever $D/L$ is noncyclic and by Lemma~\ref{sepcirc} whenever $D/L$ is cyclic.

Suppose that Statement~$3$ of Lemma~\ref{Sring} holds for $\mathcal{A}$. Then $\mathcal{A}\cong_{\cay} \cyc(K,D)$, where $K\leq \operatorname{Aut}(D)$ is one of the groups listed in Table~$1$ for $p=2$ and in Table~$2$ for $p=3$. If $K=K_0$ then $\mathcal{A}=\mathbb{Z}D$ is separable. If $p=2$ and $K\in \{K_1,K_2,K_3,K_4,K_7,K_8,K_9,K_{10}\}$ or $p=3$ and $K\in \{K_1,K_2,K_4,K_5\}$ then $\mathcal{A}$ is quasi-thin and it is easy to check directly that there are no $\mathcal{A}$-subgroups $H$ in $D$ such that $H\cong C_2\times C_2$ and  $\mathcal{A}_{D/H}=\mathbb{Z}(D/H)$. So in these cases  $\mathcal{A}$ is separable by Lemma~\ref{quasithin}. If $p=3$ and $K=K_3$ then $\mathcal{A}$ is the tensor product of two quasi-thin   $S$-rings over cyclic  $3$-groups and hence $\mathcal{A}$ is separable by Lemma~\ref{quasithin} and Lemma~\ref{schurtens}. 

Consider the remaining cases. Let $p=2$ and $K\in \{K_5,K_6\}$ or $p=3$  and $K\in \{K_6,K_7,K_8,K_9\}$.

\begin{lemm}\label{isom}
$D^{'}\cong D$.
\end{lemm}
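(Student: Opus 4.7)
The strategy is to verify the hypotheses of Lemma~\ref{group} applied to $D'$ with $q = p$ and $m = k$, which then forces $D' \cong C_p \times C_{p^k} \cong D$. Note that $|D'| = p^{k+1}$, and $m = k \geq 4$ for $p = 2$ (i.e., $K \in \{K_5, K_6\}$) while $m = k \geq 3$ for $p = 3$ (i.e., $K \in \{K_6, K_7, K_8, K_9\}$), so the constraint $m \geq 3$ of Lemma~\ref{group} is met.

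To supply the two subgroups of $D'$ of order $p^{k-1}$, one cyclic, I exhibit two $\mathcal{A}$-subgroups of $D$ and transport them via $\varphi$: $H_1 = A_{k-1} = \langle a^p\rangle$ (cyclic) and $H_2 = D_{k-2} = B \times A_{k-2}$ (noncyclic, characteristic in $D$ and hence automatically $K$-invariant). $K$-invariance of $H_1$ for each listed $K$ is checked directly from the action of the given generators of $K$ on $a^p$. The images $H_1^{\varphi}, H_2^{\varphi}$ are then distinct $\mathcal{A}'$-subgroups of $D'$ of order $p^{k-1}$. Cyclicity of $H_1^{\varphi}$ follows from Lemma~\ref{simcycl}: a per-$K$ inspection of the $K$-orbits on $A_{k-1}$ shows that $\mathcal{A}_{H_1}$ falls into one of the cases in its hypothesis (rank $2$, $\mathbb{Z}A_{k-1}$, or the inversion cyclotomic), and $|H_1| = p^{k-1} \geq 8$ in all cases at hand.

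Next, let $A_1 = \langle a_1\rangle$ be the unique minimal $\mathcal{A}$-subgroup of $H_1$, and set $A_1' = A_1^{\varphi} \leq H_1^{\varphi}$. To verify the quotient condition $D'/A_1' \cong C_p \times C_{p^{k-1}}$, I apply Lemma~\ref{group} recursively to $D'/A_1'$ with parameter $m-1 = k-1$; the required input subgroups come from the images in $D/A_1 \cong C_p \times C_{p^{k-1}}$ of $A_{k-1}$, $D_{k-2}$, and $A_2$. The recursion decreases the parameter by one at each step, terminating at $m = 3$ (the smallest value permitted by Lemma~\ref{group}) or earlier at quotient groups of order at most $p^3$, where the handful of possible abelian isomorphism types are pinned down from the available $\mathcal{A}$-subgroup data by direct inspection.

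For the boundary case $p = 3$, $k = 3$ (and at any intermediate level where the recursion parameter reaches $m = 3$), Lemma~\ref{group} additionally requires a noncyclic $\mathcal{A}$-subgroup $W \leq D$ of order $p^2$ with $|W \cap A_{k-1}| = p$ and $D/W$ cyclic. I take $W = \langle b, a_1 \rangle$, the $p$-torsion of $D$: this is characteristic, noncyclic of order $p^2$, with $W \cap A_{k-1} = A_1$ of order $p$ and $D/W \cong A/A_1$ cyclic. Cyclicity of $D'/W^{\varphi}$ follows by applying Lemma~\ref{simcycl} to the induced algebraic isomorphism $\varphi_{D/W}\colon \mathcal{A}_{D/W} \to \mathcal{A}'_{D'/W^{\varphi}}$ on this cyclic section, and noncyclicity of $W^{\varphi}$ is read off the structural shape of $\mathcal{A}_W$ in each case (for each listed $K$, $\mathcal{A}_W$ contains $A_1$ as an $\mathcal{A}$-subgroup together with basic sets of sizes that force $W^{\varphi}$ not to be cyclic). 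The main obstacle is the explicit per-$K$ orbit computation needed to verify $K$-invariance of $H_1$ and to identify $\mathcal{A}_{H_1}$ with one of the forms of Lemma~\ref{simcycl}; a secondary subtlety is organizing the recursion in the quotient step so that its base cases, at exponents too small for another invocation of Lemma~\ref{group}, are resolved by direct inspection.
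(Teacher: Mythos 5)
Your high-level strategy coincides with the paper's: verify the hypotheses of Lemma~\ref{group} for $D^{'}$. But two of your key steps do not go through. First, for $p=2$ you take the cyclic subgroup to be $H_1=A_{k-1}=\langle a^2\rangle$ and claim that $\mathcal{A}_{H_1}$ falls under Lemma~\ref{simcycl}. It does not: for $K\in\{K_5,K_6\}$ the generator $(a,b)\mapsto(ba_2a,ba_1)$ sends $a^{2j}$ to $a_1^{j}a^{2j}$, so the basic set containing the generator $a^2$ of $A_{k-1}$ is $\{a^2,a_1a^2,a^{-2},a_1a^{-2}\}$ --- of size $4$ with radical $A_1$ --- and $\mathcal{A}_{A_{k-1}}$ is neither $\mathbb{Z}A_{k-1}$ nor the inversion cyclotomic. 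Hence Lemma~\ref{simcycl} gives you no cyclicity of $A_{k-1}^{\varphi}$. The paper avoids this by instead taking $F=\langle Y\rangle$ for a basic set $Y=\{bu,bu^{-1}\}\subseteq b(A_{k-1}\setminus A_{k-2})$, on which the restricted $S$-ring really is the inversion cyclotomic.

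Second, your verification of condition $2)$ of Lemma~\ref{group} (that $D^{'}/A_1^{\varphi}\cong C_q\times C_{q^{m-1}}$) by recursion is not carried out, and its crucial ingredient is false: the noncyclicity of $W^{\varphi}$ cannot be ``read off the structural shape of $\mathcal{A}_W$''. For $W=D_1\cong C_p\times C_p$ and, say, $p=3$, $K=K_6$, the $S$-ring $\mathcal{A}_{D_1}$ has basic sets $\{e\},\{a_1\},\{a_1^2\},bA_1,b^2A_1$, which is algebraically isomorphic to the wreath product $\mathbb{Z}C_3\wr\mathcal{T}_{C_3}$ realized over $C_9$; an analogous $S$-ring over $C_9$ exists for $K_8$ as well. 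So $W^{\varphi}$ could a priori be cyclic, and this is exactly why the paper must work much harder: it proves $D^{'}/A_1^{\varphi}\cong C_q\times C_{q^{m-1}}$ directly by analyzing the image of a highest basic set $\pi(X)$ (its cardinality, symmetry and radical, properties $(4)$--$(6)$), and in the boundary case $p=3$, $k=3$ it must rule out $D^{'}\cong C_9\times C_9$ by a separate, lengthy argument splitting on whether the full $3$-torsion $D_1^{'}$ is an $\mathcal{A}^{'}$-subgroup and invoking Lemma~\ref{sch} together with a counting contradiction. Your proposal does not engage with this obstruction at all, and it is the hardest part of the lemma.
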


\begin{proof}
Let us check that $D^{'}$ satisfies the conditions of  Lemma~\ref{group}. At first consider the case $p=2$. 

1. The inequality    $k\geq 4$  holds because $K\in \{K_5,K_6\}$ (see Table~$1$).

2. Note that $\{bu,bu^{-1}\}\in \mathcal{S}(\mathcal{A})$ for every  $u\in A_{k-1}\setminus A_{k-2}$ as $K\in \{K_5,K_6\}$. Choose a basic set $Y\subseteq b(A_{k-1}\setminus A_{k-2})$. The group $F=\langle Y \rangle$ is a cyclic $\mathcal{A}$-subgroup of order $2^{k-1}$ and $\mathcal{A}_{F}=\cyc(M,F)$, where $M=\{\varepsilon,\sigma\}$ and $\sigma:x\rightarrow x^{-1}$. Since $k\geq 4$, we conclude that   $|F|>4$. Clearly that $\varphi$ induces the algebraic isomorphism 
$$\varphi_{F}:\mathcal{A}_{F}\rightarrow \mathcal{A}_{F^{\varphi}}.$$
From Lemma~\ref{simcycl} it follows that $F^{\varphi}$ is a cyclic subgroup of  $D^{'}$ of order $2^{k-1}$.

3. The group $D_{k-2}$ is an $\mathcal{A}$-subgroup of order  $2^{k-1}$ distinct from $F$. So  $D_{k-2}^{\varphi}$ is an $\mathcal{A}^{'}$-subgroup of order~$2^{k-1}$ distinct from  $F^{\varphi}$.

4. The group $A_1$ is an $\mathcal{A}$-subgroup of order~$2$. So  $A_1^{\varphi}$ is an $\mathcal{A}^{'}$-subgroup of order~$2$. Let $\pi:D\rightarrow D/A_1$ be the  quotient epimorphism and $X$ be a highest basic set of  $\mathcal{A}$. Then $X=\{x,x^{-1},ba_2x,ba_2^{-1}x^{-1}\}$ whenever $K=K_5$ and 
$X=\{x,a_1x^{-1},ba_2x,ba_2x^{-1}\}$ whenever $K=K_6$ for some generator $x$ of $A$. The set $\pi(X)$ is a generating set of $D/A_1$ and the following properties hold
$$|\pi(X)|=4,~\pi(X)=\pi(X)^{-1},~|\rad(\pi(X))|=2.~\eqno(4)$$ 
Let
$$\varphi_{D/A_1}:\mathcal{A}_{D/A_1}\rightarrow \mathcal{A}_{D^{'}/A_1^{\varphi}}$$
be the algebraic isomorphism induced by $\varphi$. From $(3)$ it follows that $\pi(X)^{\varphi_{D/A_1}}$ is a generating set of $D^{'}/A_1^{\varphi}$ and  $(4)$ also holds for  $\pi(X)^{\varphi_{D/A_1}}$. Let $\pi(X)^{\varphi_{D/A_1}}=\{x^{'},b^{'}x^{'},y^{'},b^{'}y^{'}\}$, where $\{e,b^{'}\}=\rad(\pi(X)^{\varphi_{D/A_1}})$. If $(x^{'})^{-1}=bx^{'}$ then $(x^{'})^2=(y^{'})^2=b^{'}$ and hence $|D^{'}/A_1^{\varphi}|=8$. So $|D|=|D^{'}|=16$. We obtain a contradiction because  $k\geq 4$ and $|D|\geq 32$. Therefore we may assume that  $y^{'}=(x^{'})^{-1}$. This implies that  $D^{'}/A_1^{\varphi}$ is generated by at most two elements one of which has order~$2$. Note that $D^{'}/A_1^{\varphi}$ is noncyclic because it contains at least two subgroups $A_2^{\varphi}/A_1^{\varphi}$ and $D_1^{\varphi}/A_1^{\varphi}$ of order~$2$. We conclude that $D^{'}/A_1^{\varphi}\cong  C_2\times C_{2^{k-1}}$. Thus  $D^{'}\cong D \cong  C_2\times C_{2^{k}}$ by Lemma~\ref{group}. 

Now let us check that the conditions of Lemma~\ref{group} hold for $D^{'}$ whenever $p=3$.

1. Since $K\in \{K_6,K_7, K_7, K_8\}$,  the group $A_{k-1}$ is a cyclic  $\mathcal{A}$-subgroup of order~$3^{k-1}$. Moreover, $\mathcal{A}_{A_{k-1}}=\mathbb{Z}A_{k-1}$ whenever  $K\in\{K_6,K_7\}$ and $\mathcal{A}_{A_{k-1}}=\cyc(M,A_{k-1})$, where $M=\{\varepsilon,\sigma\},~\sigma:x\rightarrow x^{-1}$, whenever $K\in\{K_8,K_9\}$. Clearly that $\varphi$ induces the algebraic isomorphism 
$$\varphi_{A_{k-1}}:\mathcal{A}_{A_{k-1}}\rightarrow \mathcal{A}_{(A_{k-1})^{\varphi}}.$$
Lemma~\ref{simcycl} yields that $A_{k-1}^{\varphi}$ is a cyclic $\mathcal{A}^{'}$-subgroup of order~$3^{k-1}$.

2. The group $D_{k-2}^{\varphi}$ is an $\mathcal{A}^{'}$subgroup of order~$3^{k-1}$ distinct from $A_{k-1}^{\varphi}$.

3. Note that $A_1^{\varphi}$ is an $\mathcal{A}^{'}$-subgroup of order~$3$. Let $\pi:D\rightarrow D/A_1$ be the quotient epimorphism and $X$ be a highest basic set of $\mathcal{A}$. If $K\in\{K_6,K_7\}$ then $X=\{x,bx,b^2a_1x\}$ and if $K\in\{K_8,K_9\}$ then $X=\{x,x^{-1},bx,b^2x^{-1},b^2a_1^2x,ba_1x^{-1}\}$ for some generator $x$ of $A$. The set  $\pi(X)$  is a generating set of  $D/A_1$,
$$|\pi(X)|=3,~|\rad(\pi(X))|=3,~\eqno(5)$$
whenever $K\in\{K_6,K_7\}$, and
$$|\pi(X)|=6,~\pi(X)=\pi(X)^{-1},~|\rad(\pi(X))|=3,~\eqno(6)$$ 
whenever $K\in\{K_8,K_9\}$. 
Let 
$$\varphi_{D/A_1}:\mathcal{A}_{D/A_1}\rightarrow \mathcal{A}_{D^{'}/A_1^{\varphi}}$$
be the algebraic isomorphism induced by  $\varphi$. From the properties of an algebraic isomorphism it follows that  $\pi(X)^{\varphi_{D/A_1}}$ is a generating set of  $D^{'}/A_1^{\varphi}$, $(5)$  holds for $\pi(X)^{\varphi_{D/A_1}}$ if $K\in\{K_6,K_7\}$, and $(6)$  holds for $\pi(X)^{\varphi_{D/A_1}}$ if $K\in\{K_8,K_9\}$. Since $k\geq 3$, we conclude that $\pi(X)^{\varphi_{D/A_1}}=x^{'}B^{'}$ or $\pi(X)^{\varphi_{D/A_1}}=x^{'}B^{'}\cup (x^{'})^{-1}B^{'}$, where $B^{'}=\rad(\pi(X)^{\varphi_{D/A_1}})$. Therefore $D^{'}/A_1^{\varphi}$ is generated by at most two elements one of which has order~$3$. Note that $D^{'}/A_1^{\varphi}$ is noncyclic because it contains at least two subgroups $A_2^{\varphi}/A_1^{\varphi}$ and $D_1^{\varphi}/A_1^{\varphi}$ of order~$3$. This implies that $D^{'}/A_1^{\varphi}\cong  C_3\times C_{3^{k-1}}$.

By the first part of Lemma~\ref{group} the group  $D^{'}$ is isomorphic to $C_3\times C_{3^{k}}$ or $C_{9}\times C_{3^{k-1}}$. If $k\geq 4$ then $D^{'}\cong C_3\times C_{3^{k}}$ by the second part of Lemma~\ref{group}. Now let $k=3$. Put $D_1^{'}=\{x\in D^{'}:|x|=3\}$. Clearly that $|D_1^{'}|=9$. Suppose thay $D_1^{'}$ is an $\mathcal{A}^{'}$-subgroup. Then $D_1^{'}=D_1^{\varphi}$ or $D_1^{'}=A_2^{\varphi}$ since only the groups $D_1$ and $A_2$ are $\mathcal{A}$-subgroups of order~$9$. However, $A_2^{\varphi}$ is cyclic by Lemma~\ref{simcycl}. So  $D_1^{'}=D_1^{\varphi}$. The group $D/D_1$ is cyclic, $\mathcal{A}_{D/D_1}=\mathbb{Z}(D/D_1)$ or $\mathcal{A}_{D/D_1}=\cyc(M,D/D_1)$, where $M=\{\varepsilon,\sigma\},~\sigma:x\rightarrow x^{-1}$. Therefore the group $D^{'}/D_1^{'}$ is also cyclic by Lemma~\ref{simcycl}.  Note that $|D_1\cap A_{2}|=3$ and hence $|D_1^{'}\cap A_{2}^{\varphi}|=3$. Thus  $D^{'}\cong D\cong C_3\times C_{9}$ by the second part of Lemma~\ref{group}.

Suppose that $D^{'} \cong C_{9}\times C_9$. Then by the above discussion $D_1^{'}$ is not an  $\mathcal{A}^{'}$-subgroup. The  group $D_2$ is an $\mathcal{A}$-subgroup as  $\mathcal{A}$ is regular. So $D_2^{\varphi}$ is an $\mathcal{A}^{'}$-subgroup and  $D^{'}\setminus~D_2^{\varphi}$ is an $\mathcal{A}^{'}$-set. Since $|D_2|=|D_2^{\varphi}|=27$, the inclusion $D_1^{'}\subset~D_2^{\varphi}$ holds. Let $X\subseteq D\setminus D_2$ be a highest basic set of $\mathcal{A}$. Then $|X|\in \{3,6\}$, $\rad(X)$ is trivial, $\langle X \rangle=D$, and if $|X|=6$ then $X=X^{-1}$. These properties also hold for  $X^{\varphi}$. 

Suppose that $|xD_1^{'}\cap X^{\varphi}|=3$, where $x^{'}\in X^{\varphi}$. If  $|X^{\varphi}|=3$ then $Y^{'}=X^{\varphi}(X^{\varphi})^{-1}$ is an $\mathcal{A}^{'}$-set and $Y^{'}\subseteq D_1^{'}$. Moreover, $Y^{'}\nsubseteq  A_1^{\varphi}$ since otherwise $\rad(X^{\varphi})=A_1^{\varphi}$. So $D_1^{'}=\langle  A_1^{\varphi},Y^{'} \rangle $ is  an  $\mathcal{A}^{'}$-subgroup, a contradiction. Let  $|X^{\varphi}|=6$. If $((x^{'})^2\cup (x^{'})^{-2})D_1^{'}\cap D_2^{\varphi}\neq \varnothing$ then $((x^{'})^2\cup (x^{'})^{-2})D_1^{'}\subset D_2^{\varphi}$ and  hence $x^{'}\in  D_2^{\varphi}$. On the other hand,  $x^{'}\in D^{'}\setminus D_2^{\varphi}$, a contradiction. Therefore $((x^{'})^2\cup (x^{'})^{-2})D_1^{'}\cap D_2^{\varphi}=\varnothing$. 
This implies that $Y^{'}=X^2\cap  D_2^{\varphi}$ is an $\mathcal{A}^{'}$-set and $Y^{'}\subseteq D_1^{'}$. Note that $Y^{'}\nsubseteq  A_1^{\varphi}$ because otherwise $\rad(X^{\varphi})=A_1^{\varphi}$. We conclude that $D_1^{'}=\langle  A_1^{\varphi},Y^{'} \rangle $ is an $\mathcal{A}^{'}$-subgroup, a contradiction. So  $|xD_1^{'}\cap X^{\varphi}|\neq 3$ for every highest basic set $X\in \mathcal{S}(\mathcal{A})$. Then $Y^{'}=(X^{\varphi})^{[3]}$ is an $\mathcal{A}^{'}$-set by Lemma~\ref{sch}. Since  $D^{'} \cong C_{9}\times C_9$, we obtain that $Y^{'}\subseteq D_1^{'}$. If $Y^{'}\nsubseteq A_1^{\varphi}$ then $D_1^{'}=\langle  A_1^{\varphi},Y^{'} \rangle $ is an $\mathcal{A}^{'}$-subgroup, a contradiction. Therefore $(X^{\varphi})^{[3]}\subseteq  A_1^{\varphi}$ for every highest basic set   $X$. The union of all highest basic sets of  $\mathcal{A}$ has cardinality~$54$. So $|\{x\in D^{'}:x^3\in A_1^{\varphi}\}|\geq 54$ that is impossible whenever  $D^{'} \cong C_{9}\times C_9$. Thus $D^{'}$is not isomorphic to  $C_{9}\times C_9$ and hence $D^{'}\cong D\cong C_3\times C_{9}$.
\end{proof}

Further we may assume without loss of generality that  $D=D^{'}$.

\begin{lemm}\label{cayis}
$\mathcal{A}^{'}\cong_{\cay} \mathcal{A}$.
\end{lemm}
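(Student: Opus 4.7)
The plan is to first show that $\mathcal{A}^{'}$ also falls under Statement~$3$ of Lemma~\ref{Sring}, so that $\mathcal{A}^{'} \cong_{\cay} \cyc(K^{'}, D)$ for some $K^{'}$ in the relevant table, and then identify $K^{'}$ with $K$ up to $\aut(D)$-conjugacy by using invariants preserved by $\varphi$.

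First I would rule out Statements~$1$ and $2$ of Lemma~\ref{Sring} for $\mathcal{A}^{'}$. Statement~$2$ requires $\rad(\mathcal{A}^{'}) > e$, but since $\rad(\mathcal{A}) = e$ holds under Statement~$3$ of Lemma~\ref{Sring}, and radicals of basic sets are preserved by~$(3)$, we get $\rad(\mathcal{A}^{'}) = e$. Statement~$1$ would give a decomposition $\mathcal{A}^{'}=\mathcal{A}^{'}_{H^{'}} \otimes \mathcal{A}^{'}_{L^{'}}$ with $\rk(\mathcal{A}^{'}_{H^{'}}) = 2$ and $|L^{'}| \le p$, hence a basic set of size $|H^{'}| - 1 \ge p^{k-1} - 1$. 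However, every basic set of $\mathcal{A}$ has cardinality at most $|K| \le 6$, and for $p = 2$, $k \ge 4$ and for $p = 3$, $k \ge 3$ the bound $p^{k-1} - 1$ strictly exceeds~$6$, so $\mathcal{A}^{'}$ admits no such basic set either. Therefore $\mathcal{A}^{'} \cong_{\cay} \cyc(K^{'}, D)$ for some $K^{'}$ in Table~$1$ (if $p = 2$) or Table~$2$ (if $p = 3$).

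Second, I would distinguish $K$ from every other row of the table by means of $\varphi$-invariants: the multiset of basic-set cardinalities, self-inverseness of each basic set, the lattice of $\mathcal{A}$-subgroups, and the isomorphism types of the induced $S$-rings on all $\mathcal{A}$-sections (in particular the cyclic $\mathcal{A}$-subgroup $A_{k-1}$ already used in the proof of Lemma~\ref{isom}). For example, when $p = 2$, the groups $K_5$ and $K_6$ yield identical ranks and valencies, but the highest basic sets $\{x,x^{-1},ba_2x,ba_2^{-1}x^{-1}\}$ coming from $K_5$ are self-inverse, whereas the highest basic sets $\{x,a_1x^{-1},ba_2x,ba_2x^{-1}\}$ coming from $K_6$ are not; since $\varphi$ preserves self-inverseness, this separates them. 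For $p = 3$, the group $K_6$ is distinguished from $K_7, K_8, K_9$ by basic-set size (highest basic sets have cardinality~$3$ versus~$6$), the group $K_7$ is distinguished from $K_8, K_9$ by the isomorphism type of $\mathcal{A}_{A_{k-1}}$ (namely $\mathbb{Z}A_{k-1}$ versus $\cyc(\langle\sigma\rangle, A_{k-1})$), and $K_8$ is distinguished from $K_9$ by inspecting the basic set containing $b$, whose orbit has size~$3$ under $K_8$ but size~$6$ under $K_9$.

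The principal obstacle is this case-by-case bookkeeping: for each of the six relevant $K$'s one must extract from $\mathcal{A}$ a combinatorial invariant that pins it down among the rows of its table and verify that the same invariant applies to $\mathcal{A}^{'}$ via $\varphi$. Once this is done, $K^{'}$ and $K$ lie in a common $\aut(D)$-orbit, and therefore $\mathcal{A}^{'} \cong_{\cay} \cyc(K^{'}, D) \cong_{\cay} \cyc(K, D) \cong_{\cay} \mathcal{A}$, as required.
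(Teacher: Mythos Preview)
Your overall approach coincides with the paper's: rule out Statements~$1$ and~$2$ of Lemma~\ref{Sring} for $\mathcal{A}'$ (trivial radical; basic-set sizes, where the paper instead uses the rank bound $|\mathcal{S}(\mathcal{A}')|=|\mathcal{S}(\mathcal{A})|>6$), and then identify $K'$ via $\varphi$-preserved invariants. For $p=2$ your self-inverseness of highest basic sets is exactly the paper's symmetry criterion. For $p=3$ the paper uses the single uniform invariant $\mathcal{N}(\mathcal{B})=\{|X|:X\in\mathcal{S}(\mathcal{B})\}$, which takes the pairwise distinct values $\{1,3\}$, $\{1,3,6\}$, $\{1,2,3,6\}$, $\{1,2,6\}$ for $K_6,K_7,K_8,K_9$.

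There is one concrete error in your $p=3$ case analysis: the highest basic sets of $\cyc(K_7,D)$ have cardinality~$3$, not~$6$. The second generator $(a,b)\mapsto(a,b^2a_1)$ of $K_7$ fixes $a$, so the $K_7$-orbit of any generator of $A$ coincides with its $K_6$-orbit $\{x,bx,b^2a_1x\}$; this is recorded in Lemma~\ref{gen}, where the same highest basic set is listed for both $K_6$ and $K_7$. Thus highest-basic-set size does not separate $K_6$ from $K_7$, and your chain of distinctions breaks at the first step. The repair is immediate and already lies within your own list of invariants: the orbit of $b$ has size~$3$ under $K_6$ and size~$6$ under $K_7$, or equivalently $\mathcal{N}$ distinguishes them as above. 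With this correction your argument goes through.
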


\begin{proof}
One of the statements of Lemma~\ref{Sring} holds for  $\mathcal{A}^{'}$. Since $\rad(\mathcal{A})$ is trivial, from $(3)$ it follows that $\rad(\mathcal{A}^{'})$ is also trivial. So Statement~$2$ of Lemma~\ref{Sring} does not  hold for  $\mathcal{A}^{'}$. Clearly that  $|\mathcal{S}(\mathcal{A}^{'})|=|\mathcal{S}(\mathcal{A})|>6$. This implies that  Statement~$1$ of Lemma~\ref{Sring}  does not hold for $\mathcal{A}^{'}$. Thus  Statement~$3$ of Lemma~\ref{Sring} holds for $\mathcal{A}^{'}$. It means that  $\mathcal{A}^{'}\cong_{\cay}\cyc(K^{'},D)$, where $K^{'}\leq \operatorname{Aut}(D)$ is one of the groups listed in Table~1 for $p=2$ and in Table~2 for $p=3$.

At first consider the case  $p=2$. There are basic sets of $\mathcal{A}$ of cardinality~$4$. So $\mathcal{A}^{'}$ also has basic sets of cardinality~$4$. This yields that $\mathcal{A}^{'}$ is not quasi-thin. Therefore $K^{'}\in \{K_5,K_6\}$. Not that $\cyc(K_5,D)$ is symmetric whereas  $\cyc(K_6,D)$ is not symmetric. If $\mathcal{A}$ is symmetric then by the properties of an algebraic isomorphism  $\mathcal{A}^{'}$ is also symmetric and hence $\mathcal{A}^{'}\cong_{\cay}\cyc(K_5,D)\cong_{\cay}\mathcal{A}$. If $\mathcal{A}$ is not symmetric then  $\mathcal{A}^{'}$ is also not symmetric and  $\mathcal{A}^{'}\cong_{\cay}\cyc(K_6,D)\cong_{\cay}\mathcal{A}$.

Now let $p=3$. Given an $S$-ring $\mathcal{B}$ put 
$$\mathcal{N}(\mathcal{B})=\{|X|:X\in \mathcal{S}(\mathcal{B})\}.$$ 
It is clear that $\mathcal{N}(\mathcal{B})$ is  invariant under algebraic isomorphisms. Therefore the statement of  the lemma follows from the next observation: $\mathcal{B}=\cyc(K_i,D)$ is the unique up to Cayley isomorphism cyclotomic $S$-ring over  $D$ such that

$1)$  $\mathcal{N}(\mathcal{B})=\{1,3\}$ if $i=6$;

$2)$ $\mathcal{N}(\mathcal{B})=\{1,3,6\}$  if $i=7$;

$3)$ $\mathcal{N}(\mathcal{B})=\{1,2,3,6\}$ if $i=8$;

$4)$ $\mathcal{N}(\mathcal{B})=\{1,2,6\}$ if $i=9$.
\end{proof}

Let $X\in\mathcal{S}(\mathcal{A})$ be a highest basic set. If $p=3$ and $K\in\{K_6,K_8\}$ then put $Z=bA_1$ and  $Y=X\cup Z$; otherwise put $Y=X$.

\begin{lemm}\label{gen}
$\mathcal{A}=\langle \underline{Y} \rangle$ and $\mathcal{A}^{'}=\langle \underline{Y^{\varphi}} \rangle$.
\end{lemm}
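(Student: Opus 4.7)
The plan is to verify, case by case for each of the six remaining groups $K\in\{K_5,K_6\}$ (for $p=2$) and $K\in\{K_6,K_7,K_8,K_9\}$ (for $p=3$), that the $S$-subring $\mathcal{B}=\langle \underline{Y}\rangle\subseteq \mathcal{A}$ actually coincides with $\mathcal{A}$. Since $\mathcal{A}$ is cyclotomic, its basic sets are precisely the $K$-orbits on $D$, and $\mathcal{B}\subseteq \mathcal{A}$ means the basic sets of $\mathcal{B}$ are unions of $K$-orbits. So the task is to show that no such lumping can occur.

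First I would record, for each of the six cases, the explicit description of $X$ (hence of $Y$) given in the proof of Lemma~\ref{isom}, and note that $\langle X\rangle=D$ and that $\rad(\mathcal{A})$ is trivial. Then I would generate new $\mathcal{B}$-sets by three mechanisms: (i) expanding the products $\underline{Y}\cdot \underline{Y}$, $\underline{Y}\cdot \underline{Y}^{-1}$, $\underline{Y}^{3}$, etc., whose coefficients provide linear combinations of basic sets with known multiplicities; (ii) applying Lemma~\ref{burn}, so that $Y^{(m)}$ is a $\mathcal{B}$-set for every $m$ coprime to $p$; and (iii) applying Lemma~\ref{sch}, so that $Y^{[p]}\in \mathcal{B}$. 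Taking supports and radicals (via~(3)), together with symmetrising using $Y^{-1}$, these operations yield cyclic $\mathcal{B}$-subgroups such as $\langle a\rangle$, the subgroup $A_1$, the $\mathcal{B}$-set $bA_1$ (which is precisely why it had to be adjoined in the cases $p=3$, $K\in\{K_6,K_8\}$ where $X$ alone fails to detect the $A_1$-coset structure of $D\setminus A$), and eventually the full orbit partition.

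Once all $K$-orbits on $D$ appear as $\mathcal{B}$-sets, it follows that $\mathcal{B}=\mathcal{A}$. For the second assertion, observe that by Lemma~\ref{cayis} we have $\mathcal{A}'\cong_{\cay}\mathcal{A}$, so $\mathcal{A}'$ is a cyclotomic $S$-ring of the same type over~$D$. Since $Y$ is an $\mathcal{A}$-set, $Y^{\varphi}$ is an $\mathcal{A}'$-set, and every step of the generation carried out above is expressible purely in terms of structure constants and the Schur operations, which are preserved by the algebraic isomorphism~$\varphi$. Hence the same computations performed inside $\mathcal{A}'$ starting from $\underline{Y^{\varphi}}$ produce every basic set of $\mathcal{A}'$, giving $\mathcal{A}'=\langle \underline{Y^{\varphi}}\rangle$.

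The main obstacle is the explicit verification in each of the six cases, particularly for $p=3$, $K\in\{K_6,K_8\}$: here the highest basic set $X$ alone is too small (it is contained in $b^iA$ for fixed $i\neq 0$ or similar cosets and has trivial enough $[3]$-support) to recover the coset structure modulo $A_1$, which is why the auxiliary set $Z=bA_1$ must be included in $Y$. The computations themselves are routine once one tracks how powers of $\underline{Y}$ and the operations $(-)^{(m)}$, $(-)^{[p]}$ successively refine the partition of $D$ induced by $\mathcal{B}$ down to the $K$-orbit partition.
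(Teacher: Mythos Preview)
Your approach is workable but differs substantially from the paper's, and there are a couple of inaccuracies worth flagging.

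\textbf{Comparison with the paper.} The paper does not carry out any explicit generation of basic sets from $\underline{Y}$ via products, powers, or the Schur maps $(-)^{(m)}$ and $(-)^{[p]}$. Instead it argues structurally: setting $\mathcal{A}_1=\langle \underline{Y}\rangle$, it observes that Statement~1 of Lemma~\ref{Sring} fails for $\mathcal{A}_1$ (a tensor factor of rank~2 would force the elements of order $p^k$ into a basic set of size at least $p^k-1$), and Statement~2 fails because every subset of $Y$ consisting of elements of order $p^k$ has trivial radical, so $\mathcal{A}_1$ has a highest basic set with trivial radical. Hence Statement~3 holds, $\mathcal{A}_1$ is cyclotomic, and the short list in Table~1/Table~2 pins it down as $\mathcal{A}$. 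This avoids all case-by-case computation and is considerably shorter than what you propose; your route would work but trades the classification lemma for six explicit calculations.

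\textbf{An error in your motivation for adjoining $Z$.} Your explanation that $X$ ``is contained in $b^iA$ for fixed $i\neq 0$'' is incorrect: for $p=3$, $K=K_6$ one has $X=\{x,bx,b^2a_1x\}$, which meets every $A$-coset. The actual reason, which the paper states, is that $\cyc(K_6,D)$ and $\cyc(K_7,D)$ share the same highest basic sets (likewise $K_8$ and $K_9$), so $\langle \underline{X}\rangle=\cyc(K_7,D)\supsetneq\cyc(K_6,D)$; the extra set $Z=bA_1$ is needed to cut $\langle\underline{Y}\rangle$ down to the smaller ring.

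\textbf{On the second assertion.} Your idea is right but the justification is slightly off: the maps $(-)^{(m)}$ and $(-)^{[p]}$ are not a priori intertwined by an algebraic isomorphism. The clean argument is that $\varphi$, being a ring isomorphism that matches basic sets, carries $S$-subrings to $S$-subrings, so $\varphi(\langle\underline{Y}\rangle)=\langle\underline{Y^\varphi}\rangle$; once $\langle\underline{Y}\rangle=\mathcal{A}$ this gives $\langle\underline{Y^\varphi}\rangle=\mathcal{A}'$ immediately. The paper instead identifies $Y^\varphi$ explicitly (showing $X^\varphi$ is a highest basic set and $Z^\varphi\in\{Z,Z^{-1}\}$) and then reruns the first-part argument inside $\mathcal{A}'$, because that explicit description is needed in the subsequent Lemma~\ref{cayind}.
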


\begin{proof}
 Put $\mathcal{A}_1=\langle \underline{Y} \rangle$. From Lemma~\ref{Sring} it follows that $X$ contains a generator $x$ of $A$ and 
$$X=\{x,x^{-1},ba_2x,ba_2^{-1}x^{-1}\},$$ 
if $p=2$ and $K=K_5$; 
$$X=\{x,a_1x^{-1},ba_2x,ba_2x^{-1}\},$$ 
if $p=2$ and $K=K_6$;
$$X=\{x,bx,b^2a_1x\},$$
if $p=3$ and $K\in\{K_6,K_7\}$;
$$X=\{x,x^{-1},bx,b^2x^{-1},b^2a_1^2x,ba_1x^{-1}\},$$
if $p=3$ and $K\in\{K_8,K_9\}$.

Statement~$1$ of Lemma~\ref{Sring} does not hold for  $\mathcal{A}_1$ because otherwise every element of order  $p^k$ must lie in a basic set of cardinality at least  $p^k-1$, where $k \geq 4$ if $p=2$ and $k \geq 3$ if $p=3$. It is easy to check that every subset of $Y$  consisting of elements of order $p^k$ has the trivial radical. So $\mathcal{A}_1$ has a highest basic set with the trivial radical  and Statement~$2$ of Lemma~\ref{Sring} does not hold for $\mathcal{A}_1$. Thus $\mathcal{A}_1$ is cyclotomic. From the classification of all cyclotomic  $S$-rings over $D$ that is given in Lemma~\ref{Sring} it follows that $\mathcal{A}_1=\mathcal{A}$.  It should be mentioned that if  $p=3$ then $\cyc(K_6,D)$ and $\cyc(K_7,D)$ as well as  $\cyc(K_8,D)$ and $\cyc(K_9,D)$ have the same highest basic sets. In the cases when $K\in\{K_7,K_9\}$ the $S$-ring $\mathcal{A}$ is generated by the highest basic set  $X$ and in the cases when $K\in\{K_6,K_8\}$ the $S$-ring $\mathcal{A}$ is generated by $X\cup Z$.

From~$(3)$  it follows that $X^{\varphi}$  is a highest basic set of $\mathcal{A}^{'}$. If $p=3$ then there are exactly two subgroups $A_2$ and $D_1$ of order~$9$ in $D$. These subgroups are $\mathcal{A}$-subgroups. The group $A_2^{\varphi}$ is a cyclic  $\mathcal{A}^{'}$-subgroup by Lemma~\ref{simcycl} and hence $A_2^{\varphi}=A_2$. So $D_1^{\varphi}=D_1$. If $K\in\{K_6,K_8\}$ then  $Z^{\varphi}\in \{Z,Z^{-1}\}$ because only  $Z$ and $Z^{-1}$ are basic sets of cardinality~$3$ inside $D_1$. Thus if $Y=X\cup Z$ then $Y^{\varphi}=X^{\varphi}\cup Z$ or $Y^{\varphi}=X^{\varphi}\cup Z^{-1}$. Since $\mathcal{A}^{'}\cong_{\cay} \mathcal{A}$, by using the above arguments it can  be proved that $\mathcal{A}^{'}=\langle \underline{Y^{\varphi}} \rangle$.
\end{proof}

\begin{lemm}\label{cayind}
The algebraic isomorphism $\varphi$ is induced by a Cayley isomorphism.
\end{lemm}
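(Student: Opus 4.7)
The plan is to reduce the statement, by combining Lemmas~\ref{gen} and~\ref{uniq}, to producing a group automorphism $\sigma\in\aut(D)$ with $\sigma(Y)=Y^\varphi$. Once such $\sigma$ is in hand, it is a Cayley isomorphism from $\mathcal{A}$ onto $\mathcal{A}'$, since Lemma~\ref{gen} gives $\mathcal{A}=\langle\underline{Y}\rangle$ and $\mathcal{A}'=\langle\underline{Y^\varphi}\rangle$; it therefore induces an algebraic isomorphism $\varphi_\sigma$ with $\varphi_\sigma(\underline{Y})=\underline{Y^\varphi}$. Applying Lemma~\ref{uniq} with the rank-$2$ $S$-rings contained in $\mathcal{A}$ and $\mathcal{A}'$ (on which there is a unique algebraic isomorphism) and the generators $\xi=\underline{Y}$, $\xi'=\underline{Y^\varphi}$, both $\varphi$ and $\varphi_\sigma$ are extensions sending $\xi\mapsto\xi'$, so they must coincide.

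To construct $\sigma$, I would first invoke Lemma~\ref{cayis} to obtain a Cayley isomorphism $\tau\in\aut(D)$ with $\tau(\mathcal{S}(\mathcal{A}))=\mathcal{S}(\mathcal{A}')$. Both $\tau(X)$ and $X^\varphi$ are then highest basic sets of $\mathcal{A}'$ by~$(3)$, so by Lemma~\ref{burn} they are rationally conjugate inside $\mathcal{A}'$: there exists $m$ coprime to $|D|$ with $(\tau(X))^{(m)}=X^\varphi$. Setting $\sigma_0:=\sigma_m\circ\tau\in\aut(D)$ yields $\sigma_0(X)=X^\varphi$. In the cases $Y=X$ (namely $p=2$, $K\in\{K_5,K_6\}$, and $p=3$, $K\in\{K_7,K_9\}$) this already furnishes the required $\sigma=\sigma_0$.

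It remains to treat $p=3$, $K\in\{K_6,K_8\}$, where $Y=X\cup Z$ with $Z=bA_1\subseteq D_1$. Here I would check that $\sigma_0(Z)$ coincides with $Z^\varphi$ up to inversion: $\sigma_0\in\aut(D)$ fixes $D_1$ (the unique subgroup isomorphic to $C_3\times C_3$), preserves cardinalities and generated subgroups, so $\sigma_0(Z)$ is one of the two basic sets of $\mathcal{A}'$ of cardinality~$3$ inside $D_1$ generating $D_1$, namely $Z^\varphi$ or $(Z^\varphi)^{-1}$. If $\sigma_0(Z)=Z^\varphi$, take $\sigma=\sigma_0$; otherwise replace $\sigma_0$ by $\sigma:=\sigma_0\circ\mu$, where $\mu\in\aut(D)$ is a group automorphism fixing $X$ setwise but exchanging $Z$ with $Z^{-1}$. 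For $K=K_6$ one verifies that $\mu\colon a\mapsto a,\ b\mapsto b^2a_1$ has this property, and for $K=K_8$ that $\mu\colon a\mapsto a,\ b\mapsto b^2a_1^2$ works; both verifications reduce to short direct computations using the explicit form of $X$ recalled in Lemma~\ref{gen}.

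The main obstacle is the compatibility step of the previous paragraph: showing that whenever the naive rational conjugation $\sigma_0$ produces the wrong orientation on $Z$, a symmetry of $X$ swapping $Z$ with $Z^{-1}$ can be found inside $\aut(D)$. This is precisely where the particular shape of the generators of $K_6$ and $K_8$ from Table~$2$ enters the argument; once the automorphism $\mu$ is exhibited, the reduction via Lemma~\ref{uniq} from the first paragraph closes the proof.
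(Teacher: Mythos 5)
Your proposal is correct and follows essentially the same route as the paper: obtain a Cayley isomorphism from Lemma~\ref{cayis}, adjust it by a rational conjugation (Lemma~\ref{burn}) so that it matches $\varphi$ on the highest basic set $X$, correct the orientation of $Z$ in the cases $p=3$, $K\in\{K_6,K_8\}$ by an explicit automorphism (your $\mu$ is the same map as the paper's $h$, merely applied on the source rather than the target side), and conclude by combining Lemma~\ref{gen} with the uniqueness statement of Lemma~\ref{uniq}. The only point to make explicit, as the paper does, is that the corrected map is still a Cayley isomorphism onto $\mathcal{A}'$, which follows because the image $S$-ring is cyclotomic and contains $\underline{Y^{\varphi}}$.
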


\begin{proof}

Lemma~\ref{cayis} implies that there exists a Cayley isomorphism $f$ from $\mathcal{A}$ to $\mathcal{A}^{'}$. The sets $X^{\varphi}$ and $X^{f}$ are highest basic sets of $\mathcal{A}^{'}$. If $p=2$ then every highest basic set of $\mathcal{A}^{'}$ is of the form
$$X_0\cup bX_1,$$
where $X_i\subseteq A,~X_i\neq \varnothing,~i\in \{0,1\}$, because $K\in\{K_5,K_6\}$. Similarly, if $p=3$ then every highest basic set of $\mathcal{A}^{'}$ is of the form
$$X_0\cup bX_1\cup b^2X_2,$$
where $X_i\subseteq A,~X_i\neq \varnothing,~i\in \{0,1,2\}$. So by Lemma~\ref{burn} the sets  $X^{\varphi}$ and $X^{f}$ are rationally conjugate and there exists a Cayley isomorphism  $g$ from $\mathcal{A}^{'}$ onto itself such that $X^{fg}=X^{\varphi}$. The Cayley isomorphism $fg$ from $\mathcal{A}$ to $\mathcal{A}^{'}$ induces the algebraic isomorphism  $\varphi_{fg}$. If $p=2$ or $p=3$ and $K\in\{K_7,K_9\}$ then $\mathcal{A}=\langle \underline{X} \rangle$ and $\mathcal{A}^{'}=\langle \underline{X^{\varphi}} \rangle$ by Lemma~\ref{gen}. In these cases from Lemma~\ref{uniq} it follows that $\varphi=\varphi_{fg}$.

Now let $p=3$ and $K\in\{K_6,K_8\}$. From Lemma~\ref{gen} it follows that $\mathcal{A}=\langle \underline{Y} \rangle$ and $\mathcal{A}^{'}=\langle \underline{Y^{\varphi}} \rangle$. If $Z^{fg}= Z^{\varphi}$ then $Y^{fg}=Y^{\varphi}$ and hence Lemma~\ref{uniq} implies that $\varphi=\varphi_{fg}$. Suppose that $Z^{fg}\neq Z^{\varphi}$. Without loss of generality we may assume that  $Z^{fg}=\{b,ba_1,ba_1^2\}$ and $Z^{\varphi}=\{b^2,b^2a_1,b^2a_1^2\}$. If $K=K_6$ then $X^{\varphi}=\{y,by,b^2a_1y\}$ or $X^{\varphi}=\{y,ba_1^2y,b^2y\}$; if $K=K_8$ then $X^{\varphi}=\{y,y^{-1},by,ba_1y^{-1},b^2a_1^2y,b^2y^{-1}\}$, where $y$ is a generator of $A$. Put
$$h:(y,b)\rightarrow (y,b^2a_1)\in \aut(D)$$
 if $K=K_6$ and 
$$h:(y,b)\rightarrow (y,b^2a_1^2)\in \aut(D)$$
if $K=K_8$. The straightforward check shows that $X^{fgh}=(X^{\varphi})^h=X^{\varphi}$ and $Z^{fgh}=Z^{\varphi}$. Since $X^{\varphi}$ is a highest basic set of the cyclotomic  $S$-ring $(\mathcal{A}^{'})^{h}$, we conclude that $(\mathcal{A}^{'})^{h}=\mathcal{A}^{'}$. Therefore $fgh$ is a Cayley isomorphism from $\mathcal{A}$ to $\mathcal{A}^{'}$ such that $Y^{fgh}=Y^{\varphi}$. From Lemma~\ref{uniq} it follows that $\varphi=\varphi_{fgh}$, where $\varphi_{fgh}$ is the algebraic isomorphism induced by  $fgh$.
\end{proof}

Thus if $\mathcal{A}=\cyc(K,D)$, where $K\in\{K_5,K_6\}$ whenever $p=2$ and $K\in\{K_6,K_7,K_8,K_9\}$ whenever $p=3$, every algebraic isomorphism of $\mathcal{A}$ is induced by a Cayley isomorphism. So $\mathcal{A}$ is separable and the proof of Theorem~$1$ is complete.

\section{Separability and the isomorphism problem for Cayley graphs}

 Let $\Gamma=\cay(G,X)$ and $\Gamma^{'}=\cay(G^{'},X^{'})$ be Cayley graphs over groups  $G$ and $G^{'}$  respectively. Denote the set of all isomorphisms from $\Gamma$ to $\Gamma^{'}$ by $\iso(\Gamma,\Gamma^{'})$. Fix classes of groups  $\mathcal{K}$ and $\mathcal{K}^{'}$. The isomorpism problem for Cayley graphs can be formulated as follows. 

\textbf{ISO.} Given Cayley graphs  $\Gamma$ over $G\in \mathcal{K}$ and $\Gamma^{'}$ over $G^{'}\in \mathcal{K}^{'}$  determine whether  $\iso(\Gamma,\Gamma^{'})\neq \varnothing$.

Further we consider the reduction of  ISO to the  following problem:

\textbf{ALISO.} Given Cayley schemes   $\mathcal{C}$ over  $G\in \mathcal{K}$ and $\mathcal{C}^{'}$  over $G^{'}\in \mathcal{K}^{'}$  and an algebraic isomorphism  $\varphi:\mathcal{C}\rightarrow \mathcal{C}^{'}$  determine whether  $\iso(\mathcal{C},\mathcal{C}^{'},\varphi)\neq \varnothing$.  

\begin{prop}\label{reduction}
 ISO  is reduced to ALISO in time $|G|^{O(1)}$.
\end{prop}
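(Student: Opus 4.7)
The plan is to apply the Weisfeiler--Leman algorithm \cite{Weis,WeisL} to the Cayley graphs $\Gamma=\cay(G,X)$ and $\Gamma^{'}=\cay(G^{'},X^{'})$ in order to replace the graph isomorphism question by a scheme isomorphism question, and then to reduce the latter to polynomially many queries of ALISO. First, I would run $2$-dimensional WL on $\Gamma$; this produces in time $|G|^{O(1)}$ the coarsest Cayley scheme $\mathcal{C}$ over $G$ in which $R(X)$ is a union of basic relations. The analogous procedure applied to $\Gamma^{'}$ yields a Cayley scheme $\mathcal{C}^{'}$ over $G^{'}$. By canonicity of WL, every $f\in\iso(\Gamma,\Gamma^{'})$ is automatically a scheme isomorphism $\mathcal{C}\to\mathcal{C}^{'}$, and hence induces an algebraic isomorphism $\varphi_f:\mathcal{C}\to\mathcal{C}^{'}$ that respects the splitting of $R(X)$ into basic relations.

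Next, I would enumerate the set $\Phi$ of all algebraic isomorphisms $\varphi:\mathcal{C}\to\mathcal{C}^{'}$ which map the partition of $R(X)$ into $\mathcal{C}$-basic relations onto the corresponding partition of $R(X^{'})$. For each $\varphi\in\Phi$ I would make a single call to the ALISO oracle with input $(\mathcal{C},\mathcal{C}^{'},\varphi)$, returning ``isomorphic'' if and only if at least one of these calls is positive. Correctness is immediate from the preceding paragraph: any graph isomorphism $f$ produces some $\varphi_f\in\Phi$ with $f\in\iso(\mathcal{C},\mathcal{C}^{'},\varphi_f)$, so some query is positive; conversely, any $f\in\iso(\mathcal{C},\mathcal{C}^{'},\varphi)$ with $\varphi\in\Phi$ is a bijection $G\to G^{'}$ sending the basic relations building $R(X)$ to those building $R(X^{'})$, hence sends $R(X)$ to $R(X^{'})$ and is thus an isomorphism $\Gamma\to\Gamma^{'}$.

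The main obstacle will be to establish a polynomial bound $|\Phi|\leq |G|^{O(1)}$ and to exhibit $\Phi$ explicitly in time $|G|^{O(1)}$. I expect to import this from the WL-based analysis of \cite[Section~$6.2$]{EP5}: once WL has stabilized on the pair of colored graphs built from $\Gamma$ and $\Gamma^{'}$, any algebraic isomorphism between the resulting Cayley schemes is determined, up to polynomially many choices, by its action on a small distinguishing set of basic relations, and the list of surviving choices can be read off the stable coloring. Granting this ingredient, the preprocessing and the enumeration of $\Phi$ both run in time $|G|^{O(1)}$, so the whole procedure is a polynomial-time Turing reduction from ISO to ALISO, which is the content of the proposition.
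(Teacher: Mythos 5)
Your overall architecture (Weisfeiler--Leman preprocessing followed by calls to an ALISO oracle, with the correctness equivalence $\iso(\mathcal{C},\mathcal{C}^{'},\varphi)\subseteq\iso(\Gamma,\Gamma^{'})$ and back) is the same as the paper's, and the correctness part of your argument is fine. The gap is exactly where you flag it: the polynomial bound on $|\Phi|$ and its explicit enumeration. You cannot in general enumerate all algebraic isomorphisms between two Cayley schemes in time $|G|^{O(1)}$, nor is their number polynomially bounded: already for $\mathcal{A}=\mathbb{Z}G$ the algebraic isomorphisms to $\mathbb{Z}G^{'}$ correspond to group isomorphisms $G\to G^{'}$, and for $G$ elementary abelian of order $n=2^m$ there are roughly $n^{\log n}$ of them. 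Restricting to those $\varphi$ compatible with the splitting of $R(X)$ does not obviously repair this, and the reference you hope to import this from does not hand you such a bound for free. So as written the reduction is not a polynomial-time reduction.

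The missing idea is a stronger use of canonicity than the one you invoke. The Weisfeiler--Leman algorithm outputs an \emph{ordered} stable partition $\mathcal{R}=(P_1,\dots,P_k)$ (resp.\ $\mathcal{R}^{'}=(Q_1,\dots,Q_l)$), and its canonicity guarantees that every isomorphism $f\in\iso(\Gamma,\Gamma^{'})$ satisfies $k=l$ and $P_i^f=Q_i$ for all $i$ --- it must match the $i$-th color class to the $i$-th color class. Hence there is exactly \emph{one} candidate algebraic isomorphism, namely $\varphi:P_i\mapsto Q_i$. One checks in time $n^{O(1)}$ (there are at most $n^3$ intersection numbers) whether this single $\varphi$ is an algebraic isomorphism; if not, the graphs are not isomorphic, and if so, a single ALISO query on $(\mathcal{C},\mathcal{C}^{'},\varphi)$ decides ISO, since then $\iso(\mathcal{C},\mathcal{C}^{'},\varphi)=\iso(\Gamma,\Gamma^{'})$. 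Replacing your enumeration of $\Phi$ by this single canonical candidate closes the gap and recovers the paper's proof.
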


\begin{proof}
Suppose that there is an algorithm $Al_1$ solving ALISO. We assume that $|G|=|G^{'}|=n$ since otherwise, obviously,  $\Gamma$ and $\Gamma^{'}$ are not isomorphic. Denote the sets of edges of $\Gamma$ and $\Gamma^{'}$ by  $E$  and $E^{'}$ respectively. Let 
$$\mathcal{T}=(\diag(G\times G),E,G\times G\setminus (E\cup \diag(G\times G))$$ 
and
$$\mathcal{T}^{'}=(\diag(G^{'}\times G^{'}),E^{'},G^{'}\times G^{'}\setminus (E^{'}\cup \diag(G^{'}\times G^{'}))$$ 
be the corresponding to $\Gamma$ and $\Gamma^{'}$ ordered partitions of  $G\times G$ and $G^{'}\times G^{'}$. By using the Weisfeiler-Leman algorithm  (\cite{Weis,WeisL}) we can  construct in time $n^{O(1)}$ starting from $\mathcal{T}$ and $\mathcal{T}^{'}$  the ordered partitions $\mathcal{R}=(P_1,P_2,\ldots,P_k)$ and  $\mathcal{R}^{'}=(Q_1,Q_2,\ldots,Q_l)$ defining Cayley schemes $\mathcal{C}$ and $\mathcal{C}^{'}$ over $G$ and $G^{'}$ respectively. These schemes are the least schemes for which  $E$ and $E^{'}$ are unions of basic relations.

If $f\in \iso(\Gamma,\Gamma^{'})$ then by  properties of the Weisfeiler-Leman algorithm $k=l$, $f$ is an isomorphism from $\mathcal{C}$ to $\mathcal{C}^{'}$ such that $P_i^f=Q_i,~i=1,\ldots,k$, and hence the bijection $\varphi:P_i\rightarrow Q_i,~i=1,\ldots,k$, is an algebraic isomorphism. Conversly, if $\varphi:P_i\rightarrow Q_i$ is an algebraic isomorphism and $f\in \iso(\mathcal{C},\mathcal{C}^{'},\varphi)$ then $E^f=E^{'}$ and hence $f \in \iso(\Gamma,\Gamma^{'})$. Therefore $\iso(\mathcal{C},\mathcal{C}^{'},\varphi)=\iso(\Gamma,\Gamma^{'})$.

One can check in time $n^{O(1)}$ whether the mapping $\varphi:P_i\rightarrow Q_i,~i=1,\ldots,k$, is an algebraic isomorphism because  $\mathcal{C}$ has at most  $n^3$ intersection numbers. If $\varphi$ is not an algebraic isomorphism then  $\Gamma$ and $\Gamma^{'}$ are not isomorphic. If $\varphi$  is an algebraic isomorphism then applying $Al_1$ it can be  determined whether the set $\iso(\mathcal{C},\mathcal{C}^{'}, \varphi)=\iso(\Gamma,\Gamma^{'})$ is not empty.
\end{proof}

Now let $\mathcal{K}$ be the class of groups isomorphic to $D=C_p\times C_{p^k}$, where  $p\in \{2,3\}$ and $k\geq 1$, and $\mathcal{K}^{'}$ be the class of all abelian groups. If $\mathcal{C}$ is a Cayley scheme over $G\in \mathcal{K}$ that is separable with respect to the class of Cayley schemes over  groups from  $\mathcal{K}^{'}$ and $\mathcal{C}^{'}\in \mathcal{K}^{'}$ then ALISO is trivial because  for every algebraic isomorphism  $\varphi:\mathcal{C}\rightarrow \mathcal{C}^{'}$ the set $\iso(\mathcal{C},\mathcal{C}^{'},\varphi)$ is not empty. Therefore ISO can be solved in time $|G|^{O(1)}$. Thus Theorem~$2$ follows from Theorem~$1$, Proposition~\ref{connect}, and Proposition~\ref{reduction} applying to the classes $\mathcal{K}$ and $\mathcal{K}^{'}$.

It should be mentioned that the material of this section is based on the concepts suggested in \cite{Weis} and developed in  \cite{EP5}.

\clearpage

\end{document}